\def\dist{{\rm dist}}
\def\eps{{\varepsilon}}
\def\mes{{\rm mes}}
\def\Card{{\rm Card}}
\def\Osc{\mathop{\rm Osc}}
\def\one{{\mathbf{1}}}
\def\Prob{{\mathbb{P}}}
\def\bbG{\mathbb{G}}
\def\bbL{\mathbb{L}}
\def\naturals{\mathbb{N}}
\def\Tor{\mathbb{T}}
\def\reals{\mathbb{R}}
\def\integers{\mathbb{Z}}
\def\RmII{{I\!\!I}}
\def\bG{\mathbf{G}}
\def\be{\mathbf{e}}
\def\bg{\mathbf{g}}
\def\bk{\mathbf{k}}
\def\bz{\mathbf{z}}
\def\brZ{{\bar Z}}
\def\brc{{\bar c}}
\def\brn{{\bar n}}
\def\brz{{\bar z}}
\def\brnu{{\bar \nu}}
\def\brtau{{\bar\tau}}
\def\brsigma{{\bar \sigma}}
\def\brPhi{{\bar \Phi}}
\def\brOmega{{\bar\Omega}}
\def\cA{\mathcal{A}}
\def\cB{\mathcal{B}}
\def\cC{\mathcal{C}}
\def\cD{\mathcal{D}}
\def\cJ{\mathcal{J}}
\def\cG{\mathcal{G}}
\def\cE{\mathcal{E}}
\def\cK{\mathcal{K}}
\def\cL{\mathcal{L}}
\def\cR{\mathcal{R}}
\def\cS{\mathcal{S}}
\def\cT{\mathcal{T}}
\def\cV{\mathcal{V}}
\def\cW{\mathcal{W}}
\def\cZ{\mathcal{Z}}
\def\fA{\mathfrak{A}}
\def\fB{\mathfrak{B}}
\def\fG{\mathfrak{G}}
\def\fg{\mathfrak{g}}
\def\fM{\mathfrak{M}}
\def\fm{\mathfrak{m}}
\def\fp{\mathfrak{p}}
\def\fq{\mathfrak{q}}
\def\ft{\mathfrak{t}}
\def\fz{\mathfrak{z}}
\def\hC{{\hat C}}
\def\hG{{\hat G}}
\def\hL{{\hat L}}
\def\heps{{\hat{\eps}}}
\def\hmu{{\hat\mu}}
\def\hnu{{\hat\nu}}
\def\hPhi{{\hat\Phi}}
\def\tC{{\tilde C}}
\def\tG{{\tilde G}}
\def\tfm{{\tilde\fm}}
\def\tR{{\tilde R}}
\def\tT{{\tilde T}}
\def\tX{{\tilde X}}
\def\tV{{\tilde V}}
\def\tW{{\tilde W}}
\def\tk{{\tilde k}}
\def\tn{{\tilde n}}
\def\tx{{\tilde x}}
\def\tz{{\tilde z}}
\def\tmu{{\tilde\mu}}
\def\tsigma{{\tilde\sigma}}
\def\ttau{{\tilde\tau}}
\def\beq{\begin{equation}}
\def\eeq{\end{equation}}
\newtheorem{theorem}{Theorem}[section]
\newtheorem{proposition}[theorem]{Proposition}
\newtheorem{lemma}[theorem]{Lemma}
\newtheorem{corollary}[theorem]{Corollary}
\theoremstyle{remark}
\theoremstyle{definition}
\newtheorem{defn}[theorem]{Definition}
\newtheorem{ex}[theorem]{Example}
\numberwithin{equation}{section}
\def\stackover#1#2{\mathrel{\mathop{#1}\limits_{#2}}}
\def\DS{\displaystyle}
\author{Dmitry Dolgopyat and P\'eter N\'andori}
\title{Infinite measure mixing for some mechanical systems}
\subjclass[2000]{Primary 37A40, 37D50. Secondary 37A25.}
\begin{document}
\maketitle

\begin{abstract}
We show that if an infinite measure preserving system is well approximated on most of the phase space
by a system satisfying the local limit theorem, then the original system enjoys mixing with respect to
global observables, that is, the observables which admit an infinite volume average.
The systems satisfying our conditions include the Lorentz gas with Coulomb potential, 
the Galton board and
piecewise smooth Fermi-Ulam pingpongs.
\end{abstract}

\section{Introduction}
\label{sec:int}

Mixing plays a central role in the study of stochastic properties of
dynamical systems preserving a finite measure. 
Recently, there has been a surge of interest in studying mixing properties of infinite measure preserving systems
(\cite{G11, MT12, MT13, M15, T15, AB16, CD16, LT16, T16, OP17, AN17, MT17, P17, DN17i, MT18, DNP19}). 
  Contrary to the case of finite measures, there are several different notions of mixing
in the infinite measure preserving case.

A driving force behind the development of ergodic theory and dynamical systems has always been a
desire to understand physical systems. That is why we study here the question
of infinite measure mixing for specific mechanical systems. 
In many such systems, it is natural to assume some periodicity or approximate periodicity
and to study the functions whose averages over large boxes stabilize.  
The notions
of global mixing introduced recently by Marco Lenci \cite{L10} (and further studied in \cite{L13, BGL17, L17})
are particularly suitable for our purposes. 

We will approximate our system by a periodic one: a $\mathbb Z^d$-extension of a map $f$ acting on a compact
space $M$ and preserving a finite measure. Many finite measure preserving mechanical systems $f$ 
are hyperbolic and enjoy good mixing properties, such as the local limit theorem (LLT).
It turns out that the notions of LLT and mixing of the extended system are nicely connected.
We have studied this connection (for different notions of mixing)
in our recent work \cite{DN17, DN17i}. By further exploiting this
relation, we are able to prove global mixing for several mechanical systems.

Next, we give informal definitions of the notions of global mixing.
Let $T$ be a map of a space $X$ preserving an infinite measure $\mu.$
The idea of \cite{L10} is to introduce two spaces: the space of local functions $L^1$
and the space of global functions $\bbG\subset L^\infty. $ The functions from $\bbG$ are supposed to
admit an average value 
$$ \brPhi=\lim_{\mu(V)\to\infty} \frac{1}{\mu(V)} \int_V \Phi d\mu $$
where the limit has to be understood in an appropriate sense.  
The map $T$ is called {\em local global mixing} if for each $\phi\in {L^1} (\mu)$ 
and each $\Phi\in \bbG$
we have
\begin{equation}
\label{LocGlobMix}
\lim_{n\to\infty} \int \phi(x) \Phi(T^n x) d\mu=\left(\int \phi d\mu \right)\; \brPhi. 
\end{equation}
$T$ is called {\em global global mixing} if for each $\Phi_1, \Phi_2\in \bbG$ for large $n$ and large $V$,
$$ \frac{1}{\mu(V)} \int_V \Phi_1(x) \Phi_2(T^n x) d\mu\approx \brPhi_1 \brPhi_2. $$

The rest of the paper consists of two parts: an abstract part and an applied part. 
In Section \ref{ScResults}, we define an abstract framework and formulate several results
implying local global and global global mixing for periodic or approximately periodic maps 
preserving an infinite invariant measure. In Section \ref{ScProofs}, we prove these results.
In Section \ref{ScFlows}, we extend the previous results to flows; still in an abstract framework.

The second part of the paper is about explicit examples where the abstract results can be applied.
In the preliminary Section
\ref{BilPrel} we review theory of hyperbolic dynamical systems with singularities. We focus on the Sinai 
billiards and related models.
The most important results of this paper are reported in Section \ref{secEx}. Here, we study local global and global global
mixing of several mechanical systems. Our examples include the following 
variants of Lorentz gas: periodic, locally perturbed, confined to a half strip,
subject to an asymptotically vanishing potential field and with Gaussian thermostats.
Besides the Lorentz gas, we study Galton boards, the Fermi Ulam pingpong and bouncing balls in a gravity field.
A reader interested in one of these examples can proceed to the appropriate subsection of Section \ref{secEx}
after reading the abstract part.
In some cases (in particular, the periodic ones) the application of the abstract results from the first part is straightforward.
  In other cases a significant amount of work is required to verify our abstract assumptions. This turns out to be most difficult in the case
of the Lorentz gas with asymptotically vanishing potential, and we present the most technical step of our analysis in the 
separate Section \ref{sec:m6}. 
{ We hope that a similar approach could be used to analyze other nonuniformly
hyperbolic mechanical systems.}
Section \ref{sec:m6} also ontains an important recurrence-transience dichotomy, which is of independent interest. 
Finally, we give a short summary of our results and mention some future research directions in Section \ref{ScConclusion}.

\section{Abstract results}
\label{ScResults}

{
\subsection{Periodic systems}
\label{ScResults1}

Let us start with periodic systems.}
Let $X=M\times\integers^d$,
$x = (y,z) \in X$ and $T(y,z)=(f(y), z+\tau(y))$ where $M$ is a {compact} 
metric space and
$f$ preserves a {Borel} probability measure $\nu.$
We equip $X$ with the measure $\mu$ which is the product of $\nu$ and the counting measure
on $\integers^d$. We write 
$$ \tau_n(y) =\sum_{j=0}^{n-1} \tau (f^{j}(y)).$$

{We now specify our choice of the space of global functions $\bbG$ 
to provide the rigorous definition of local-global and global-global mixing. In fact, we consider three
classes of global functions. \\

We say that $V \subset X$ is a {\em cube} if 
$V = M \times ( z + (-\lfloor w /2 \rfloor, w -\lfloor w /2 \rfloor]^d)$ for some $z \in \integers^d$ and $w \in \integers_+$. We also say that
$z$ is the center and $w$ is the size of the cube. }

\begin{defn}
Let 
$\bbG_O$ be the space of bounded uniformly continuous functions 
{$\Phi: X \to \mathbb R$ for which there exists
$\brPhi \in \reals$}
such that for {any 
$a_{1}, a_2 , \dots, a_d, b_1, b_2,...,b_d \in \mathbb R$
with $a_{i} < b_{i}$,}
$$ \lim_{N\to\infty} \frac{1}{\prod_j (b_{j}N - a_{j}N) } \int_
{{x=(y,z): z \in} \prod_j [a_{j} N, b_{j} N]} \Phi(x) d\mu(x)=\brPhi.$$
\\

{
Let
 $\bbG_U$ be the space of bounded uniformly continuous functions 
$\Phi: X \to \mathbb R$ for which there exists
$\brPhi \in \reals$}
such that for each 
$\eps$ there exists $N_0$
such that for each cube $V$ of size greater than $N_0$ we have
\begin{equation}
\label{eq:globalmixdef}
 \left|\frac{1}{\mu(V)} \int_{V} \Phi(x) d\mu(x)-\brPhi\right|\leq \eps.
\end{equation}
\\

{
We say $\Phi \in\bbG_{AO}$ if 
$\Phi$ is a uniformly continuous functions 
from $X$ to $\mathbb R$  for which there exists
$\brPhi \in \reals$
such that for every $\eps >0$ there exists 
$b(\eps) \in \integers_+$, and $B_0=B_0(\eps) \in \integers_+$ so that for all $B > B_0$ we have
$$|\cG_{b,B}| > (1 - \eps) B^d,$$ 
where $\cG_{b,B}$ denotes the set of points
$z \in ((-B/2,B/2]^d \cap \integers^d)$ so that the cube $V$ centered at $z$
and of size $b$ satisfies \eqref{eq:globalmixdef}.}
\end{defn}

{
We note that $\bbG_U \subset \bbG_{AO} \subset \bbG_O$
(the first containment is trivial, the second one follows from approximating a 
large
rectangular box by a disjoint union of smaller cubes). The notation "O"
represents that we require closeness to the average
on boxes containing the origin; "AO" represents approximate 
closeness to the average near the origin and "U" stand for uniform.
{  $\bbG_O$ is the largest space 
of global functions
where one could hope to obtain mixing while $\bbG_U$ is the smallest
space of interest. It turns out that $\bbG_O$ is too large for limit theorems,  see Example \ref{ExNoGlobMix}.
%since the functions from $\bbG_O$ could 
%be far from their infinite volume average on "macroscopic" regions of the size $\eps_L L^d$
%where $\eps_L$ could go to zero arbitrarily slowly. 
The intermediate space $\bbG_{AO}$ has better
properties since it captures the notion that the global observables are often "close to the local equilibrium
on mesoscopic scales" (which is represented by $b$ in our definition). An important class of global observable
are provided by {\em functions of a random environment}. Namely, let $h^z$ be an ergodic $\integers^d$ 
action on a
 space $\Omega$
preserving a measure $P.$ Given a function $\Psi$ on $M\times \Omega$ let
$\Phi_\omega(x,z)=\Psi(x, h^z \omega).$ Then it follows from the ergodic theorem 
that $\Phi_\omega\in \bbG_{AO}$ for $P$-a.e. $\omega$.
We refer the reader to \cite{DDKN} for the applications of these ideas to the study of mixing properties 
of skew products.
}\\

With the  definitions of $\bbG_O,\bbG_{AO}, \bbG_U$, \eqref{LocGlobMix} furnishes the definition of local-global mixing
with respect to $\bbG_O, \bbG_{AO},\bbG_U$. Next we define global-global mixing.}

\begin{defn}
\label{def:gg}
$T$ is global-global mixing {with respect to 
$\bbG_O/\bbG_{AO}/\bbG_U$} if for each  $\Phi_1, \Phi_2\in{\bbG_O/\bbG_{AO}/\bbG_U}$,
$$ \lim_{n\to\infty} \limsup_{{V \in \cV}, \mu(V)\to\infty} 
\frac{1}{\mu(V)} \int_V \Phi_1(x) \Phi_2(T^n x) d\mu=$$ 
$$\lim_{n\to\infty} \liminf_{{V \in \cV}, \mu(V)\to\infty} 
\frac{1}{\mu(V)} \int_V \Phi_1(x) \Phi_2(T^n x) d\mu= 
\brPhi_1 \brPhi_2. $$
{
Here, $\cV$ is the collection of cubes containing $M \times \{ 0 \}$ in case of $\bbG_O$ and
$\bbG_{AO}$
and the collection of all cubes in case of $\bbG_U$.}
\end{defn}

\begin{defn}
$T$ satisfies a {\em mixing local limit theorem (MLLT) at scale $L_n$} if there is a {bounded, continuous function
$\fp: \reals^d \to [0, \infty)$} such that
\begin{equation}
\label{P-Prob}
 \int \fp( z) d {{\mbox Leb}(z)}=1 
\end{equation}  
and for each $\phi_1, \phi_2\in C(M)$ for each $\mathbb Z^d$-valued sequence 
$z_n^0$ such that $z_n^0/L_n\to 0$ and for each $K < \infty$,  
\begin{equation}
\label{EqMixLLT}
 \lim_{n\to\infty} 
 \sup_{z \in \reals ^d, |z| < K} \left| 
 L_n^d \int \phi_1(y) \phi_2(f^n (y)) \one_{\tau_n= z_n^0 + \lfloor z L_n \rfloor} d\nu -
\nu(\phi_1) \nu(\phi_2) \fp(z) \right| =0
\end{equation}
where $\lfloor . \rfloor$ means taking lower integer part coordinate-wise.
%and the convergence is uniform if $z_n/L_n$ varies over a compact set.}

$T$ satisfies a {\em shifted mixing local limit theorem at scale $L_n$} if there is a sequence $D_n\in \reals^d$ 
and a continuous function
$\fp$ satisfying \eqref{P-Prob},
such that for each $\phi_1, \phi_2\in C(M)$ for each $\mathbb Z^d$-valued sequence 
$z_n^0$ such that
$\dfrac{z_n^0-D_n}{L_n}\to 0,$ and for each $K < \infty$, \eqref{EqMixLLT} holds.
%$$ \lim_{n\to\infty} L_n^d \int \phi_1(y) \phi_2(f^n (y)) \one_{\tau_n=z_n} d\nu=
%\nu(\phi_1) \nu(\phi_2) \fp(z) $$
%and the convergence is uniform if $\frac{z_n-D_n}{L_n}$ varies over a compact set.
\end{defn}

We remark that  the MLLT implies the following useful {\em a priori} bound: if $\phi_1, \phi_2$ are 
{ bounded} functions, then 
%the absolute value of the integral in  \eqref{EqMixLLT}
$$ \left|\int \phi_1(y) \phi_2(f^n (y)) \one_{\tau_n= z_n^0 + \lfloor z L_n \rfloor} d\nu\right|
\leq 
C ||\phi_1||_\infty ||\phi_2||_\infty L_n^{-d}. $$
 %is bounded by $
 
Now a standard approximation argument shows that
 the convergence in \eqref{EqMixLLT} is uniform for $\phi_1, \phi_2$ in a 
compact subset of $C(M)$ {(w.r.t. the $C^0$ topology).} The same remark applies to all variants of the MLLT considered
in this paper, i.e. to the shifted MLLT, the AMLLT and condition (M4) (the last two are to be defined later).

\begin{theorem}
\label{LLTIMMIx}
Suppose that $T$ satisfies MLLT. Then 

(a) $T$ is local global mixing with respect to $\bbG_O;$ %$(\bbL, \bbG_O);$

(b) $T$ is global global mixing with respect to {$\bbG_{AO}.$}
\end{theorem}

For random walks, { part (a)} is proven in \cite{Br05}. The proof { of  
Theorem~\ref{LLTIMMIx}}
 follows the arguments of 
\cite{Br05}, however, we will provide the proof in \S \ref{SSCocycle} since our setting is quite different from
that of \cite{Br05}.

\begin{theorem}
\label{SLLTIMMIx}
Suppose that $T$ satisfies a shifted MLLT. Then 

(a) $T$ is local global mixing with respect to $\bbG_U;$ %$(\bbL, \bbG_U);$

(b) $T$ is global global mixing with respect to $\bbG_U.$
\end{theorem}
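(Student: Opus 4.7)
My plan is to adapt the proof of Theorem \ref{LLTIMMIx} (to be given in \S\ref{SSCocycle}) to the shifted setting. The only substantive modification is that the shifted MLLT concentrates the distribution of $\tau_n$ near $D_n$ rather than near $0$, so every cube over which the global observable $\Phi$ will be averaged in the ensuing Riemann-sum argument is located near a $D_n$-translate, not near the origin. This is precisely why the ambient space of global observables must be upgraded from $\bbG_O$ to $\bbG_U$: we need cube averages of $\Phi$ to converge to $\brPhi$ uniformly in the location of the cube.

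For part (a), by density of compactly supported functions in $L^1(\mu)$ and linearity, we may assume $\phi(y,z)=\phi_1(y)\one_{z=z_0}$ with $\phi_1\in C(M)$ and $z_0\in\mathbb{Z}^d$ fixed. Decomposing by the value of $\tau_n(y)-D_n=:u$,
$$ \int \phi(x)\,\Phi(T^n x)\,d\mu=\sum_{u\in\mathbb{Z}^d}\int_M \phi_1(y)\,\Phi\bigl(f^n y,\,z_0+D_n+u\bigr)\,\one_{\tau_n(y)=D_n+u}\,d\nu(y). $$
Truncating to $|u|\le KL_n$ (the remainder being $o_K(1)$ by the a priori $L^\infty$ bound recorded after the MLLT definition together with the tails of $\fp$) and applying the shifted MLLT with $z_n^0=D_n$ to the test family $\{\Phi(\cdot,z_0+D_n+u)\}_u$, which is relatively compact in $C(M)$ by uniform continuity and boundedness of $\Phi$, the truncated sum equals
$$ \nu(\phi_1)\,L_n^{-d}\sum_{|u|\le KL_n}\nu\bigl(\Phi(\cdot,z_0+D_n+u)\bigr)\,\fp(u/L_n)+o(1). $$
This is a weighted Riemann sum which, via a standard partition of the support of $\fp$ together with the $\bbG_U$-property of $\Phi$, tends to $\nu(\phi_1)\,\brPhi\,\int\fp(\fz)\,d\fz=\nu(\phi_1)\,\brPhi=\bigl(\int\phi\,d\mu\bigr)\brPhi$, as $n\to\infty$ then $K\to\infty$.

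For part (b), take $V=M\times\Pi$ with $\Pi\subset\mathbb{Z}^d$ a large cube, and apply Fubini in the $z_0$-coordinate:
$$ \int_V\Phi_1(x)\,\Phi_2(T^n x)\,d\mu=\sum_{z_0\in\Pi}\int_M\Phi_1(y,z_0)\,\Phi_2\bigl(f^n y,\,z_0+\tau_n(y)\bigr)\,d\nu(y). $$
Apply the argument of (a) to each summand, with $\phi_1=\Phi_1(\cdot,z_0)$ and test family $\{\Phi_2(\cdot,z_0+D_n+u)\}_u$; the necessary uniformity over $z_0\in\Pi$ follows from relative compactness of these families in $C(M)$. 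This yields
$$ \int_V\Phi_1(x)\,\Phi_2(T^n x)\,d\mu=\brPhi_2\sum_{z_0\in\Pi}\int_M\Phi_1(y,z_0)\,d\nu(y)+o(\mu(V))=\brPhi_2\int_V\Phi_1\,d\mu+o(\mu(V)), $$
as $n\to\infty$ first and then $\mu(V)\to\infty$. Dividing by $\mu(V)$ and invoking $\Phi_1\in\bbG_U$ gives $\brPhi_1\brPhi_2$ for both the $\limsup$ and the $\liminf$. The main technical obstacle throughout is the uniformity of the MLLT: the statement gives pointwise convergence for fixed test functions, but above the test functions vary with $u$ (and with $z_0$ in (b)). This is resolved by the remark in the paper that convergence in \eqref{EqMixLLT} is uniform on compact subsets of $C(M)$, combined with relative compactness of the families $\{\Phi(\cdot,w)\}_{w\in\mathbb{Z}^d}$ in $C(M)$ guaranteed by uniform continuity and boundedness of $\Phi$.
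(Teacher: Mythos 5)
Your proposal is correct and follows essentially the same route as the paper, whose own proof of this theorem is a single sentence deferring to the proof of Theorem \ref{LLTIMMIx} with ``boxes around $D_n$ rather than around the origin'': you recentre the Riemann-sum decomposition at $D_n$, invoke the uniform-on-compacts version of the shifted MLLT via precompactness of $\{\Phi(\cdot,w)\}_w$ in $C(M)$, and use the location-uniform averaging of $\bbG_U$ exactly where $\bbG_O$ would fail. The only points to tighten are the tail bound (the a priori bound alone does not control the sum over $|u|>KL_n$; as in the paper one deduces $\nu(|\tau_n-D_n|\geq KL_n)<2\eps$ from $\int_{|z|\geq K}\fp\,dz<\eps$ and the LLT itself) and, in (b), noting explicitly that the uniformity in $z_0$ is what lets the estimate survive the $\limsup_{\mu(V)\to\infty}$ taken before $n\to\infty$ in the definition of global global mixing.
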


{
In the remaining part of  
\S \ref{ScResults1},
we comment on the suitability of the spaces 
$\bbG_O, \bbG_{AO}, \bbG_U$ for our setup. First,
we note that $\bbG_O$ and $\bbG_{AO}$ are suitable spaces in case the MLLT holds with zero drift.
In case 
the shifted MLLT holds with non-zero drift, we need to work with the smaller space $\bbG_U$
as suggested by the following example.
}

{
\begin{ex}
\label{ExNoGlobMix}
Suppose that $d=1$,
$\tau$ is bounded
and the MLLT holds with $L_N=\sqrt{N}$ and a Gaussian $\fp$.
Let $\Phi(y,z)=(-1)^m$ if $m^3\leq |z|<(m+1)^3$ for some non-negative integer $m.$
One can easily check that
$\Phi\in \bbG_O$ and $\brPhi=0.$ On the other hand,
we claim that for each $N$,
\begin{equation}
\label{eq:exsec2}
\lim_{{V \in \cV}, \mu(V)\to\infty} 
\frac{1}{\mu(V)} \int_V \Phi(x) \Phi(T^N x) d\mu =1,
\end{equation}
where $\cV$ is the collection of boxes containing 
$M \times\{ 0\}$. \eqref{eq:exsec2} shows that
 global-global mixing with respect to $ \bbG_O$ does not hold.
To prove \eqref{eq:exsec2}, note that $\Phi(y,z) \Phi(T^N(y,z)) =1$ whenever 
$$m^{3} + N\| \tau \|_{\infty} < |z| < 
(m+1)^{3} - N\| \tau \|_{\infty}$$ for 
some non-negative integer $m$ and the relative measure
of such points $(y,z)$ in large boxes is close to $1$.

Next suppose that $T$ satisfies a shifted LLT with $D_N=vN$ for some $v>0$,
$L_N=\sqrt{N}$ and a Gaussian $\fp$. Let $\phi$ be a compactly supported Lipshitz 
probability density on $X$.
For any large positive integer $m$, there exists another large positive integer
$N$ so that 
\begin{equation}
\label{N-NolgMix}
 \left|D_N-\frac{(2m)^{3}+(2m+1)^{3}}{2}\right|\leq v. 
\end{equation}
 
Since $(2m+1)^{3} -  (2m)^{3} \asymp
m^{2} \gg
 m^{3/2} \asymp N^{1/2}$,
the LLT implies that $\Phi(T^N x)=1$ for most $x$ in the support of $\phi,$
and so
\begin{equation}
\label{SeeOne}
 \left| \int \phi(x) \Phi(T^N x) d\mu - 1 \right|  =
o_m(1).
\end{equation} 
 Consequently, $T$ does not satisfy
local global mixing with respect to~$\bbG_O.$

Next, set $m_j=2^j$ and let 
$$\Phi(y,z)=\begin{cases} 1 & \text{if }(2m_j)^3\leq z<(2m_j+1)^3 \text{ for some }j \\
0 & \text{otherwise.} \end{cases} $$
One can check that
$\Phi\in \bbG_{AO}$ with $\brPhi=0$, however, taking $N$ given by \eqref{N-NolgMix}
with $m=m_j$, we get \eqref{SeeOne} showing that the
local global mixing fails on $\bbG_{AO}$ as well.
\end{ex}

Example \ref{ExNoGlobMix} shows that $\bbG_O$ and $\bbG_{AO}$ are too large for global mixing in some cases.
A typical application of mixing is to control the ergodic sums. A more sophisticated version
of Example \ref{ExNoGlobMix} given in \cite{DLN19} 
shows that the Law of Large Numbers also fails on those spaces (at least in the context of random walks),
so one needs to consider smaller spaces. One can argue that the space $\bbG_U$ is too small for many
applications. To address this issue, \cite{DLN19} introduces larger spaces, where, in the context
of random walks, one can prove local global mixing and the Law of Large Numbers. However the spaces
from \cite{DLN19} involve some additional parameters, so using them would make the present work
significantly more complicated. 
We prefer to work on $\bbG_U$ in order to highlight the main
ideas of our approach.}
 
\subsection{Almost periodic systems}
\label{sec:AMLLT}

{
The main results of this paper concern systems that are 
close to periodic in some sense but not
exactly periodic.
Let us now consider a map $\tT$ acting on the space
$$
\tX = \left[ \cup_{z \in \cB} \left( D_z \times \{ z\} \right) \right] \cup
\left[ \cup_{z \in \integers_+^{d_1}\times \integers^{d_2} \setminus \cB } \left(M \times \{ z\} \right) \right]
$$
where 
$d_1$ and $d_2$ are
 non-negative integers, $M$ and $D_z$, $z \in \cB$ are compact metric spaces. 
This setup is more general than the one in \S \ref{ScResults1}: on the one hand
we allow $\mathbb Z_+$ in the
phase space to model systems with 
global reflections and on the other hand we allow some drastic departure from perodicity: whenever $z \in \cB$, 
the phase space $D_z$
can be different from $M$. 

We  assume that $\cB$ is small in the following sense. 
For every $\eta >0$ there is $\xi=\xi(\eta)$ and $Q_0=Q_0(\eta)$ 
 so that for $Q\geq Q_0$
\begin{equation}
\label{eq:cBcond}
\frac{\left|\left\{ 
\bk \in \left[0, Q \right]^{d_1} \times \left[- \frac{Q}{2}, \frac{Q}{2} \right]^{d_2} \cap 
\integers^{d_1 + d_2}: \dist(\bk, \cB)\leq \xi Q  
\right\}\right|
}{Q^{d_1 + d_2}} < \eta.
\end{equation}

Furthermore, we assume that $\tT$ preserves a $\sigma$-finite measure 
$$\mu = {\sum_{z \in \integers_+^{d_1}\times \integers^{d_2}}} \nu_z$$
where there is some probability measure $\nu$ supported on $M$ so that 
$\nu_z(y, w) = \one_{w = z} \nu(y)$ for all $z \notin \cB$, and there is a constant $A > 1$ so that
$\nu_z$ is a finite measure of mass 
\begin{equation}
\label{eq:nuzbound}
|\nu_z| < A,
\end{equation}
supported on $D_z$ for all
$z \in \cB$.
}
\smallskip

%In fact, Theorem \ref{LLTIMMIx} holds under weaker conditions,
%to wit that \eqref{EqMixLLT} holds outside of a bounded subset of $\reals^{d_1+d_2}$ or $\reals_+^{d_1}\times \reals^{d_2}$,
%whose closure has zero Lebesgue measure. In a typical application (see Section \ref{secEx}) we consider a system defined
 % on a halfstrip or a halfplane and show that \eqref{EqMixLLT} holds far from the boundary.

%Namely, consider a map $T$ defined on 
%$X=D\cup \left(M\times \integers_+^{d_1}\times \integers^{d_2}\right)$.
Let 
$$(y(x), z(x))=\begin{cases} ({\bf y}, {\bf z}) & \text{if }x=({\bf y}, {\bf z}),
{\bf y} \in M, {\bf z } \in
(\integers_+^{d_1}\times \integers^{d_2}){ \setminus \cB}\\
(\infty, \infty) & \text{if }{ z(x) \in \cB.} \end{cases} $$ 

{Here, $\infty$ is to be thought of as a label for the
bad part of the phase space.

\begin{defn}
  $\tT$ satisfies the almost mixing LLT (AMLLT)
  if there is
a bounded continuous function
$\fp:\reals_+^{d_1}\times \reals^{d_2} \to [0, \infty)$ satisfying \eqref{P-Prob} 
such that properties (a) and (b) below
hold. \smallskip

\noindent(a)  Let
$\brnu_{\phi, w}$ denote the measure 
defined by 
\begin{equation}
\label{DefNuZ}
d \brnu_{\phi, w}(y, z) = \phi(y) \one_{z = w} {{d\nu(y)}} ,
\end{equation}
where $w \in \integers_+^{d_1} \times \integers^{d_2}  \setminus \cB$
and $\phi: M \to  \reals$ is a Lipschitz function.
Then for every $\eps >0$ and  every $R
\in \reals$,
%continuous function $\psi:M\to \reals$, 
\begin{equation}
\label{AbsLLT}
 \lim_{n\to\infty}
 \sup_{\fA_{R, \eps}}
\left|  L_n^{d_1+d_2} 
\brnu_{\phi, w } 
\left( \psi(y({ \tT}^n x)) 
1_{z({ \tT}^n x)=  \lceil \bz L_n \rceil 
-  w
}\right) - 
\fp(\bz - w/L_n) \nu(\psi) \nu(\phi) \right| = 0 
\end{equation}
where 
the supremum in $\fA_{R, \eps}$ is taken over all 
quadruples $(\phi, \psi, w, \bz)$ where
$\phi$ and $\psi$ are Lipschitz functions on $M$ satisfying 
$$\|\phi\|_{Lip}\leq R, \quad
\|\psi\|_{Lip}\leq R,\quad 
 w \in \left(\integers_+^{d_1}\times \integers^{d_2}\right)  \setminus \cB, \quad
  \bz \in   [0, \infty)^{d_1}\times \reals^{d_2}, $$
  $$
  \left|\bz - \frac{w}{L_n}\right| < R,  \quad
 \dist(L_n \bz, \cB) > \eps L_n . $$
\smallskip

{
\noindent(b)  Let
$\brnu_{ \phi, w}$ denote the measure 
defined by 
\begin{equation}
\label{DefNuZB}
d \brnu_{\phi, w}(y,z) = \phi(y) \one_{z = w} d{\nu_w}
\end{equation}
where $w \in \cB$, $\phi: D_w \to  \reals$ is a Lipschitz function.
Then for  and every $w \in \cB$, every Lipschitz function $\phi: D_{ w} \to \reals$, every $\eps >0$, every $R
\in \reals$,
%continuous function $\psi:M\to \reals$, 
\begin{equation}
\label{AbsLLTB}
 \lim_{n\to\infty}
 \sup_{\fB_{R, \eps}}
\left|  L_n^{d_1+d_2} 
\brnu_{\phi, w } 
\left( \psi(y( \tT^n x)) 
1_{z( \tT^n x)= \lceil \bz L_n \rceil}\right) - 
\fp(\bz) \nu(\psi) \nu_{w}(\phi) \right| = 0 
\end{equation}
where 
the supremum in $\fB_{R, \eps}$ is taken over all 
pairs $(\psi, \bz)$ where
 $\psi$ is Lipschitz functions on $M$ satisfying 
$$\|\psi\|_{Lip}\leq R, \quad
   \bz \in   [0, \infty)^{d_1}\times \reals^{d_2}, \quad 
|\bz| < R,  \quad \dist(L_n \bz, \cB) > \eps L_n . $$}
\end{defn}

The AMLLT is the first version of our approximate periodic assumptions
and it deserves some commentary. 
The reader should think of "non-periodic part" $\cup_{z \in \cB} (D_z \times \{ z\})$ as being "negligible" 
compared to the "periodic part" 
$ \cup_{z \in \integers_+^{d_1}\times \integers^{d_2} \setminus \cB }(M \times \{ z\})$.

The condition \eqref{eq:cBcond} implies that most cubes of size 
$\xi Q$ in the cube of size $Q$ centered at origin are disjoint from
$\cB$. In fact, in all of our applications, either $\cB$ is a single point (local perturbations of a periodic system)
or $d_1 = d_2 = 1$ and 
$\cB = \{ 1 \} \times \integers$ (systems with boundary conditions). In these examples, it is immediate to check
\eqref{eq:cBcond}.
However, we present the general condition \eqref{eq:cBcond} because the proof of
the forthcoming Theorem \ref{ThALLT-Mix} is not easier in the special cases of $\cB$ as above and we
want to allow for more general framework to accomodate 
systems with boundary conditions and with sparse local impurities
which might be a subject of a future work.

Note that in \eqref{AbsLLT}, one observable is encoded in the
density of $\bar \nu$ (as compared with the formulation of the MLLT).
We also observe that while we require the convergence in
\eqref{AbsLLT} to be uniform
in the initial position $w$ and the initial density $\phi$, 
we do not  require this uniformity in \eqref{AbsLLTB}. 
Consequently \eqref{AbsLLTB} is simpler:
we may assume that $n$ is so large that $\| w \| \ll L_n$). 
This is because \eqref{AbsLLTB} is only used in the proof of local global mixing where the
initial density is fixed while \eqref{AbsLLT} is needed for global global mixing
and in the latter case one needs to decompose global observables as a sum of local ones,
which requires the uniformity of the convergence. See Section \ref{ScProofs} for more details.

 Using $\tX$ instead of $X$ and $\tilde \mu$ instead of $\mu$,
we can define $\bbG_U, \bbG_O, \bbG_{AO}$ as before with $d = d_1 + d_2$.
Namely, in the case $d_1 = 0$, the definition is the same with $d = d_2$.
If $d_1 >0$, we just need to accommodate for the fact that certain coordinates need to be positive. 
That is, in the definition of $\bbG_O$, $a_1, ..., a_{d_1}$ are assumed to be non-negative.
In the definition of $\bbG_U$, we consider cubes 
$V = M \times ( z + (-\lfloor w /2 \rfloor, w -\lfloor w /2 \rfloor]^d)$
where $z \in \integers^{d_1 + d_2}$, $z_1, ..., z_{d_1} > 0$ and $w \in \integers_+$ satisfies
$w < z_1,..., w< z_{d_1}$.
Finally, in the definition of $\bbG_{AO}$,
$\cG_{b,B}$ denotes the set of points
$z \in ((-b/2,B - b/2]^{d_1} \times (-B/2,B/2]^{d_2} \cup \integers^d)$ so that the cube centered at $z$
and of size $b$ satisfies \eqref{eq:globalmixdef}. 

The definition of global-global mixing
is the same as before, using the measure $\tilde \mu$. 
In the 
definition of local-global mixing \eqref{LocGlobMix},
we allow any function $\phi$ which is in $L^1(\tilde \mu)$

We think about $\cB$ as 
"small" , as exemplified by \eqref{eq:cBcond} and by the following observation.
The definitions
of $\bbG_O$ and  $\bbG_{AO}$ only depend on the "periodic part" of $\tX$ in the sense that
if we change a function $\Phi$ on the set $\cup_{z \in \cB} D_z \times \{ z\}$ (so as the new function is still
bounded and uniformly continuous), then it will not
affect whether $\Phi \in \bbG_{O}/ \bbG_{AO}$ holds or not. This follows from the condition \eqref{eq:cBcond}}.

We have the following result. 

{
\begin{theorem}
  \label{ThALLT-Mix}
 (a)  If $\tT$ satisfies the AMLLT,
then it enjoys local global mixing with respect to $\bbG_O.$

 (b)  If $\tT$ satisfies the AMLLT, 
then it enjoys global global mixing with respect to $\bbG_{AO}.$
%and global global mixing with respect to {$\bbG_{AO}.$}
\end{theorem}
}

\subsection{Approximately periodic systems}

Next { we study global mixing for
maps} which are asymptotically periodic at infinity.
{Thus we consider a periodic map $T$ on the set $X$ preserving the periodic measure $\mu$ as in 
\S \ref{ScResults1}.
In the setup of the next theorem,
global-global mixing of $\tT$ 
 is defined using the averages with respect to $\mu$,
 which need not be preserved by $\tT$.}

\begin{proposition}
\label{PrLocPert} 
If $T$ is a {periodic} map of a space $X$ preserving an infinite measure $\mu$ which is global global mixing {with respect to either $\bbG_{AO}$
or $\bbG_U$}
and
if $\tT$ equals to $T$ away from a finite {$\mu$-}measure set, then $\tT$ is also global global mixing
{with respect to the same space}.
\end{proposition}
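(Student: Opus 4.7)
The plan is to exploit that $\tT^n$ and $T^n$ agree off a set of $\mu$-measure at most $n\mu(A)$, where $A$ is a set of finite measure outside of which $T = \tT$; meanwhile the normalizing volume $\mu(V)$ is sent to infinity independently. Since for each fixed $n$ the exceptional set has finite measure, averaging against a domain $V$ of diverging $\mu$-measure washes it out, and one then takes $n \to \infty$ at the very end.

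More concretely, pick a measurable $A \subset X$ with $\mu(A) < \infty$ such that $T = \tT$ on $X \setminus A$, and set $B_n = \bigcup_{k=0}^{n-1} \tT^{-k}(A)$. The $\tT$-invariance of $\mu$ yields $\mu(B_n) \leq n \mu(A) < \infty$, and a one-step induction shows that $\tT^j(x) = T^j(x)$ for all $0 \leq j \leq n$ whenever $x \notin B_n$. For $\Phi_1, \Phi_2 \in \bbG$ with $M = \max(\|\Phi_1\|_\infty, \|\Phi_2\|_\infty)$ this gives
$$ \left| \frac{1}{\mu(V)} \int_V \Phi_1(x) \bigl[ \Phi_2(\tT^n x) - \Phi_2(T^n x) \bigr] \, d\mu \right| \leq \frac{2 M^2 \, n \mu(A)}{\mu(V)}, $$
which, with $n$ fixed, tends to $0$ as $\mu(V) \to \infty$. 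Hence the $\limsup$ and $\liminf$ (as $\mu(V) \to \infty$) of the normalized $\tT$-averages coincide with those of the $T$-averages.

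Since the spatial mean $\brPhi$ depends only on $\mu$ and not on the dynamics, the hypothesized global-global mixing of $T$ supplies the common limit $\brPhi_1 \brPhi_2$ as $n \to \infty$; this value is then automatically inherited by $\tT$, completing the verification of Definition 2.1 for $\tT$.

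I do not anticipate any genuine difficulty: the argument is a direct volume estimate in which the order of limits (first $\mu(V) \to \infty$, then $n \to \infty$) is the crucial feature that makes the linear-in-$n$ error $\mu(B_n)$ harmless. The only point to flag is the implicit assumption that $\tT$ itself preserves $\mu$ (otherwise the conclusion is not even well-posed), which is what legitimizes the bound $\mu(B_n) \leq n \mu(A)$.
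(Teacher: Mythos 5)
Your argument is correct and is essentially the paper's own proof: both bound the discrepancy by $2\|\Phi_1\|_\infty\|\Phi_2\|_\infty\, n\mu(A)$ via the set where the two orbits diverge within $n$ steps, note this is independent of $V$, and let $\mu(V)\to\infty$ first. Your explicit construction of $B_n=\bigcup_{k=0}^{n-1}\tT^{-k}(A)$ and the remark that $\mu$-invariance of $\tT$ is needed for $\mu(B_n)\le n\mu(A)$ just make precise what the paper leaves implicit.
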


In the remaining part of Section \ref{ScResults}, we discuss more drastic perturbations. The statements in this part of this section are unavoidably more technical. In fact, in our formulations we had two (somewhat conflicting) goals. First we wanted to facilitate the verifications of our abstract conditions for
  specific models of Section \ref{secEx}. Secondly, we wanted to emphasize that the proofs of our more technical results are very similar to the proofs
  for simpler periodic models. We advise the reader to {consult Sections \ref{sec:3.2} and \ref{secEx} for a complete
  understanding of the role of the} technical conditions imposed below.
  
\begin{defn}
\label{DefWApp}  
{ Let $T$ be
a periodic map on the set $X = M \times \integers^d$ preserving the periodic measure $\mu$ as in Section \ref{ScResults1}. Let $\tT$ be a map on
$X$.}
We say that $\tT$ is
  {\em very well approximated by $T$ at infinity} if $\tT$ preserves $\mu$ 
{and}
%(i) $\tT:X\to X$ is such that 

(i) For each $\eps>0$ there exists $R$ such that for each $|z|>R$ 
there is a set $A_{z, \eps}\subset M$ such that $\mu(A_{z, \eps})<\eps$ and for
{all} $y \not\in A_z$,
\begin{equation}
\label{eq:vwapproxcond}
d(\tT(y, z), T(y,z))<\eps.\end{equation}
\end{defn}

\begin{defn}
\label{DefWApp2} 
{Let $T$ be as in Definition \ref{DefWApp} and $\tT$ be a map
on
$\tX = D \cup ( M \times \integers_+^{d_1} \times \integers^{d_2})$,
where $D$ is a compact metric space.}

We say that $\tT$ is {\em well approximated by $T$ at infinity} if 
%{on the one hand}, 
$\tT$ preserves
a measure $\tilde \mu$ such that  { $\tilde \mu(D) < \infty$} and for any $\eps>0$ there is $\delta=\delta(\eps)>0$ which
satisfies the following:
if $V$ is a box 
{ centered at 
$z \in \integers_+^{d_1} \times \integers^{d_2}$
of size $w \in \integers_+$ so that for all 
$i = 1,..., d_1 + d_2$, $w < |z_i| \delta$, then}
%such that
%$ \sup_V z \leq (1+\delta) \inf_V z$ then  
\begin{equation}
\label{GoodDensity}
 \sup_V \frac{d\tmu}{d\mu} \leq (1+\eps) \inf_V \frac{d\tmu}{d\mu}
\end{equation}
{ and moreover either (i) or (ii) holds, where 

(i) is as in Definition \ref{DefWApp} (in particular $d_1=0, d_2=d$) and

(ii) $d_1>0$, %$D$ has finite $\tilde \mu$ measure and 
\eqref{eq:vwapproxcond} holds for $|z_1| > R$.
}

\end{defn}

{ Observe that if the measure $\tmu$ satisfies \eqref{GoodDensity} then the 
spaces of the global observables defined with respect to $\mu$ and $\tmu$ coincide
(and the infinite volume averages $\brPhi$ are the same). Therefore 
we will suppose in that follows that the spaces $\bbG_U$ and $\bbG_{AO}$ below are
defined using the invariant measure of the system as a reference measure.
}

\begin{theorem}
\label{ThInfAppGGM}
Suppose that $\tau$ is bounded and both $\tau$ and $T$ are almost everywhere continuous.

(a) 
If $\tT$ is very well approximated by $T$ at infinity and $T$ is global global mixing 
with respect to either {$\bbG_{AO}$} or $\bbG_U$, then 
$\tT$ is global global mixing with respect to the same space.

(b) 
If $\tT$ is well approximated by $T$ at infinity and $T$ is global global mixing 
with respect to $\bbG_U$, then so is
$\tT$.
\end{theorem}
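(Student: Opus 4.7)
My plan is to compare, for each fixed $n$, the $\tT^n$-average over a large cube $V$ to the corresponding $T^n$-average and show the difference tends to zero as $\mu(V)\to\infty$ (resp.\ $\tmu(V)\to\infty$ for part (b)); the global global mixing of $T$ then transfers to $\tT$ upon letting $n\to\infty$.

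Fix $n,\eps,\delta>0$ and a cube $V\subset X$ of side $N$. Decompose $V=V_{\mathrm{good}}\sqcup V_{\mathrm{exc}}$, where $V_{\mathrm{exc}}$ collects the $x\in V$ for which at least one of the following holds:
(i) $|z(x)|\leq R+Mn$, where $M:=\|\tau\|_\infty$ and $R=R(\eps)$ comes from the very-well-approximation condition;
(ii) for some $0\leq k<n$, $T^k x$ or $\tT^k x$ lies in the approximation-bad set $\{(y,z):|z|>R,\ y\in A_{z,\eps}\}$;
(iii) for some $0\leq k<n$, $T^k x$ or $\tT^k x$ has $y$-coordinate in a compact $E_\delta\subset M$ with $\nu(E_\delta)<\delta$, on whose complement $f$ and $\tau$ are uniformly continuous with common modulus $\omega_0$ (the integer-valued $\tau$ being then locally constant off $E_\delta$). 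Since both $T$ and $\tT$ preserve $\mu$, (ii) and (iii) each contribute at most $O(n(\eps+\delta)\mu(V))$, while (i) contributes $O((R+Mn)\mu(V)/N)$, which vanishes as $N\to\infty$ with $n$ fixed.

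For $x\in V_{\mathrm{good}}$, induction on $k$ gives $d(\tT^k x,T^k x)\leq\Delta_k$, with $\Delta_0=0$ and $\Delta_{k+1}\leq\eps+\omega_0(\Delta_k)$: writing
$$d(\tT^{k+1}x,T^{k+1}x)\leq d(\tT(\tT^k x),T(\tT^k x))+d(T(\tT^k x),T(T^k x)),$$
the first term is $<\eps$ by the approximation at $\tT^k x$ (which lies neither in the bad set nor in the boundary region thanks to (i), (ii)), and the second is $\leq\omega_0(\Delta_k)$ because local constancy of $\tau$ off $E_\delta$ freezes the $z$-coordinate while $f$ has modulus $\omega_0$. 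For fixed $n$, $\Delta_n(\eps,\delta)\to 0$ as $\eps,\delta\to 0$. Using the modulus $\omega_{\Phi_2}$ of uniform continuity of $\Phi_2$,
$$\left|\frac{1}{\mu(V)}\int_V \Phi_1\bigl[\Phi_2\circ\tT^n-\Phi_2\circ T^n\bigr]d\mu\right|\leq \|\Phi_1\|_\infty\,\omega_{\Phi_2}(\Delta_n)+2\|\Phi_1\|_\infty\|\Phi_2\|_\infty\,\frac{\mu(V_{\mathrm{exc}})}{\mu(V)}.$$
Taking $\limsup_{\mu(V)\to\infty}$ and then $\eps,\delta\to 0$ identifies the $\tT$-$\limsup$ with the $T$-$\limsup$, and analogously for $\liminf$; letting $n\to\infty$ and invoking the GGM of $T$ proves (a) for both $\bbG_O$ and $\bbG_U$. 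Case (ii) of the approximation, where $\tT$ lives on $\tX=(M\times\naturals)\cup D$, differs only in identifying $\tX\setminus D$ with the relevant subset of $X$ and absorbing $D$ into a bounded error.

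For (b), $\tT$ preserves $\tmu\neq\mu$. Given $\eps>0$ and a cube $V$ with $\tmu(V)$ large, partition $V$ into sub-boxes $V_j$ satisfying $\sup_{V_j}z\leq(1+\delta')\inf_{V_j}z$, with $\delta'=\delta(\eps)$ from \eqref{GoodDensity}; then $d\tmu/d\mu$ varies by a factor at most $(1+\eps)$ on each $V_j$. For sub-boxes far from the origin with side exceeding the $\bbG_U$-threshold $N_0(\eps)$, the argument above yields $\int_{V_j}\Phi_1\Phi_2\circ\tT^n\,d\mu\approx\brPhi_1\brPhi_2\,\mu(V_j)$, and hence $\int_{V_j}\Phi_1\Phi_2\circ\tT^n\,d\tmu\approx\brPhi_1\brPhi_2\,\tmu(V_j)$; summing, and noting that sub-boxes intersecting $\{|z|\leq R+Mn\}$ contribute a vanishing $\tmu$-fraction, gives (b). The main obstacle is the inductive shadowing step above: propagating $\tT\approx T$ through $n$ iterations relies essentially on the integer-valuedness of $\tau$ (so that small $y$-perturbations cause no $z$-drift) together with a Lusin-type compact approximation keeping both $T^k x$ and $\tT^k x$ in the continuity region of $T$.
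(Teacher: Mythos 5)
Your argument is correct and follows essentially the same route as the paper: for fixed $n$ you split the cube $V$ into an exceptional part (orbit near the origin, orbit hitting the approximation-bad sets $A_{z,\cdot}$, or orbit meeting the small set where $f,\tau$ fail to be continuous) of vanishing relative measure, and a good part on which an inductive shadowing bound $d(\tT^k x, T^k x)\le\Delta_k$ — using boundedness and integer-valuedness of $\tau$ — lets the uniform continuity of $\Phi_2$ close the estimate, with part (b) reduced to part (a) via thin sub-boxes and \eqref{GoodDensity}. The only cosmetic point is the order of quantifiers at the end: one should fix $\delta$ (hence the modulus $\omega_0$ and the local-constancy scale of $\tau$) before sending $\eps\to 0$, rather than letting $\eps,\delta\to 0$ jointly.
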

{Note that in case of more general perturbations as in
Theorem \ref{ThInfAppGGM}, we can only guarantee global global mixing. See the beginning of \S \ref{SSSLocPert}
for a counterexample to local global mixing in the same setting.}

Next we provide { sufficient} conditions for local global mixing. 
{ Let $T$ be
a periodic map on the set $X = M \times \integers^d$ preserving the periodic measure $\mu$ as in Section \ref{ScResults1} and let $\tT$ be a map on
$X$} preserving a measure $\tmu$ satisfying \eqref{GoodDensity}.
{ The notion of global function is, as discussed above, the same whether using $\mu$ or $\tilde \mu$ in the 
definition. Now we study local-global mixing with respect to
$\tilde \mu$, that is, $\mu$ is replaced by $\tilde \mu$ in 
\eqref{LocGlobMix}.}
We assume that there is
a class $\fM$ of probability measures on $X$ 
and for each $\eps>0$ there is a class $\fM_\eps$ of probability measures on $M$ such that

(M1) {\bf (Invariance)} $\tT$ preserves $\fM.$

(M2) {\bf (Density)} For each compactly supported Lipschitz function $\phi$ and for each $\eps>0$ there is a finite set of functions
$\phi_1,\dots, \phi_k\in  L^\infty(X)\cap L^1({ \tmu})$ supported on
 the unit neighborhood of the support of
$\phi$ and constants $c_1,\dots c_k$ such that
$\displaystyle \left\Vert\phi-\left(\sum_{j=1}^k c_j \phi_j\right) \right\Vert_\infty\leq \eps$ 
and
$\phi_j { \tmu}\in \fM.$

(M3) {\bf (Approximation)} 
For each $\eps>0$ and $n\in \naturals$ there exists $R>0$ such that for each $\fm\in \fM$
$$ \fm(x: |z(x)|\geq R\text{ and } d(T^n x, \tT^n x)\geq \eps)\leq \eps. $$

(M4) {\bf (Uniform LLT)}
The measures from $\fM_\eps$ satisfy uniform LLT in the sense that for each
$\phi\in C(M)$, for each $K$ and for each $z_n$,
$$ L_n^d \fm\left(\phi(f^n x) \one_{z(T^n x)=z_n}\right)-\fp(z_n/L_n) \nu(\phi)\to 0 $$
and the convergence is uniform for $\fm\in \fM_\eps$ and $|z_n|/L_n\leq K$.

(M5) {\bf (Regularity Improvement)}
There is a constant $C<\infty$ such that for each $\fm\in\fM$ and each $\eps>0$ there exists
 $n_0=n_0(\fm, \eps)$ such that for all $n\geq n_0$ there is a decomposition
$ \tT_*^n \fm=\sum_j (c_j' \fm_j'+c_j'' \fm_j'') $ where $\fm_j', \fm_j''$ are supported on 
{ $M \times \{z=j\}$. Furthermore, 
for all $j$,  $\fm_j'$, when viewed as a measure on $M$
(with $ z = j$ fixed), is in the set}
$\fM_\eps$ and $\sum_j c_j''\leq C \eps.$

(M6) {\bf (Dissipation)}
For each $\fm\in \fM$ and for each $R>0$, 
$$\fm(|z(\tT^n x)|\leq R)\to 0\text{ as }n\to\infty.$$

%We note that (M6) is satisfied if there is a random variable $\cZ$ which has no atoms at 0 and
%such that for each $\fm\in \fM$, if $x$ is distributed according to $\fm$, then
%$\frac{z(\tT^n x)}{L_n}\Rightarrow \cZ.$

We observe that while conditions (M1)--(M6) are logically independent of well approximation 
property
(Definition \ref{DefWApp2}),
condition (M3) has the same flavor as properties (i) and (ii) in that definition.
%{In the setup of the next theorem, the reference measure
%is $\mu$ and so local-global mixing is well defined.}

\begin{theorem}
\label{ThLGM-App} If $T$ and $\tT$ satisfy
(M1)-(M6), then $\tT$ is local global mixing with respect to $\bbG_U.$
\end{theorem}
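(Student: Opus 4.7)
The plan is to prove $\int\phi(x)\,\Phi(\tT^nx)\,d\mu\to(\int\phi\,d\mu)\brPhi$ for each local $\phi$ and each $\Phi\in\bbG_U$. By a standard density reduction and condition (M2), we reduce to the case where $\phi\mu=:\fm\in\fM$ is a probability measure, absorbing the $L^\infty$ remainder from (M2) into $O(\eps\|\Phi\|_\infty\mu(U))$ where $U$ is a bounded neighbourhood of $\supp\phi$. The target becomes
\[
\int\Phi\,d(\tT^n_*\fm)\longrightarrow\brPhi.
\]
Fix $\eps>0$ and split $n=m+k$ with $m,k\to\infty$ (for instance $m=k=\lfloor n/2\rfloor$). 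By (M5) at time $m$, we have $\tT^m_*\fm=\sum_jc_j'\fm_j'+\sum_jc_j''\fm_j''$ with $\fm_j'\in\fM_\eps$ supported on $\{z=j\}$ and $\sum_jc_j''\leq C\eps$, so the ``bad'' piece contributes $O(\eps\|\Phi\|_\infty)$. Since $\tT^m_*\fm\in\fM$ by (M1), condition (M3) at time $k$ with $R=R(\eps,k)$, combined with uniform continuity of $\Phi$, replaces $\Phi\circ\tT^k$ by $\Phi\circ T^k$ on the slices $|j|\geq R$ at a cost of $O(\eps\|\Phi\|_\infty+\omega_\Phi(\eps))$; by (M6), choosing $m$ large enough (relative to $R(\eps,k)$) makes the slices $|j|<R$ contribute another $O(\eps\|\Phi\|_\infty)$.

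The crux is then to evaluate $\int\Phi\circ T^k\,d\fm_j'$ for $\fm_j'\in\fM_\eps$. Fix $\delta>0$, partition $M$ into cells $\{A_i\}$ of diameter $<\delta$ with representatives $y_i\in A_i$, and regularize the indicators $\one_{A_i}$ to continuous functions $\psi_i$. Uniform continuity of $\Phi$ together with the uniform MLLT (M4) applied to each $\psi_i$ yields, setting $\Phi_\nu(w):=\int_M\Phi(y,w)\,d\nu(y)$,
\[
\int\Phi(T^kx)\,d\fm_j'(x)\;\approx\;\sum_{w\in\integers^d}L_k^{-d}\fp(w/L_k)\,\Phi_\nu(w),
\]
uniformly in $\fm_j'\in\fM_\eps$, once the tail $|w|/L_k>K$ is handled by the integrability of $\fp$ and the a priori $L_k^d$-bound implicit in (M4). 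Now approximate $\fp$ uniformly in $L^1$ by a step function constant on cubes of side $\eta$ in $\reals^d$: for $\eta L_k\geq N_0(\eps)$ the uniform-averaging property of $\Phi\in\bbG_U$ (with $\nu(M)=1$) gives the average of $\Phi_\nu$ on each corresponding $w$-cube within $\eps$ of $\brPhi$, so the displayed sum tends to $\brPhi\int\fp=\brPhi$. Summing in $j$ with $\sum_jc_j'\to 1$ concludes the argument.

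The main obstacle is the scheduling of the decomposition $n=m+k$: (M5) requires $m\to\infty$, (M3)--(M4) require $k\to\infty$, and (M6) requires $m$ large \emph{relative to} $R(\eps,k)$. The loop closes provided $R(\eps,k)\ll L_m$, which is natural in view of the note after (M6): $z(\tT^mx)/L_m$ converges to a non-atomic-at-$0$ limit, so $\tT^m_*\fm(|z|<R)\to 0$ whenever $R/L_m\to 0$. A secondary subtlety is the reconciliation of two disparate length scales---the LLT scale $L_k$ and the uniformity scale $N_0(\eps)$ of $\bbG_U$---which is precisely what the uniform-average definition of $\bbG_U$ (versus the weaker $\bbG_O$) is designed to enable via the piecewise-constant Riemann-sum comparison.
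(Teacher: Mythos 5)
Your overall strategy coincides with the paper's: reduce via (M2) to $\fm\in\fM$, evolve by $\tT$ for most of the time, use (M5) to land in $\fM_\eps$ cell by cell, pass from $\tT$ to $T$ on the last leg via (M3) and equicontinuity of $\Phi$, discard the near-origin slices via (M6), and finish with the uniform LLT (M4) combined with the $\bbG_U$ Riemann-sum argument from the proof of Theorem \ref{LLTIMMIx}. The final LLT-plus-$\bbG_U$ computation is carried out correctly, and you rightly identify why the uniform-average definition of $\bbG_U$ (rather than $\bbG_O$) is what makes the argument work for an arbitrary starting cell $j$.

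The one genuine gap is the scheduling of $n=m+k$. With $m=k=\lfloor n/2\rfloor$ the argument does not close: (M3) produces a radius $R=R(\eps,k)$ that may grow without any control as $k\to\infty$, and nothing in (M1)--(M6) guarantees $R(\eps,k)\ll L_m$. The remark following (M6) (the convergence $z(\tT^m x)/L_m\Rightarrow\cZ$) is only stated as a sufficient condition for (M6), not as a hypothesis of the theorem, so you cannot invoke it to close the loop. The resolution is to reverse the order of quantifiers, which is precisely what the uniformity over $\fM_\eps$ in (M4) is for: first fix $k=\brn(\eps)$ once and for all, large enough that $\left|\fm'\bigl(\Phi(f^{\brn}y,\,z+\tau_{\brn}(y))\bigr)-\brPhi\right|\leq\eps$ for every $\fm'\in\fM_\eps$ and every $z\in\integers^d$; then $R=R(\eps,\brn)$ supplied by (M3) is a fixed constant; finally let $m=n-\brn\to\infty$, so that (M5) applies (once $m\geq n_0(\fm,\eps)$) and (M6) makes $\fm(|z(\tT^{m}x)|\leq R)$ as small as desired. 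With this reordering your argument becomes the paper's proof; as written, the step ``choosing $m$ large relative to $R(\eps,k)$'' is not licensed when $k$ is also required to tend to infinity.
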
 

\section{Proofs}
\label{ScProofs}
Let $\bbL$ be the space of compactly supported Lipschitz functions on $X$. 
Note that $\bbL$ is dense in $L^1(\mu)$ so a standard approximation argument shows that
it suffices to prove \eqref{LocGlobMix} for $\phi\in \bbL.$ Henceforth we will suppose that all local functions
are in $\bbL.$

\subsection{ Periodic and almost periodic systems}
\label{SSCocycle}

\begin{proof}[ Proof of Theorem \ref{LLTIMMIx}(a)]
Let $\phi\in \bbL$, $\Phi\in \bbG_O. $ 
Since $\phi$ is compactly supported, we have 
$\phi(y,z) = \sum_k \phi_k(y) \one_{z = z_k}$ with a finite sum. 
Thus it suffices to prove the statement for 
{the function $(y,z) \mapsto \phi_k(y) \one_{z = k}$}, which for brevity is denoted by $\phi$ in the sequel. By the definition of
$\bbG_O$, { for every given $\eps>0$,
$R$ and $\delta$, there exists $K_0$ such that
the following property holds for all $K > K_0$:

{\bf (H)} for any cube $V$ of size $\delta K$
whose center is within $RK$ from the origin, we have 
\begin{equation}
\label{ArbCenter}    
  \left| \frac{1}{\mu(V)} \int_V \Phi d\mu - \brPhi \right| \leq \eps.
\end{equation}

Now choose $R$ such that 
\begin{equation}
\label{ChooseR}
\int_{|z|\geq R} \fp(z) dz<\eps.
\end{equation}
Then for large $n$, {the MLLT implies}
$$ \nu( y: |\tau_n(y)|\geq L_n R)<2\eps. $$
Thus
$$ \left| \int \phi(x) \Phi(T^n x) d\mu - \int \phi(x) \hPhi(T^n x) d\mu \right| \leq 2 ||\phi||_\infty ||\Phi||_\infty \eps, $$
where $\hPhi=\Phi \one_{|z|\leq R L_n}.$
Let $\Phi_{m }=\Phi \one_{z= m}$ for
$m \in \mathbb Z^d$. By the foregoing discussion,
\begin{equation}
\label{eq:proof1dec} \left| \int \phi(x) \Phi(T^n x) d\mu- \sum_{{|m|}\leq R L_n} \int \phi{(x)} \Phi_{m}(T^n x) d\mu \right|
\leq 2 ||\phi||_\infty ||\Phi||_\infty \eps .
\end{equation}
%{\color{magenta}By the boundedness and uniform continuity, it follows that for any $\eps >0$ there is a $\delta >0$ so that
%for any partition $M= \cup M_i$ with $\diam (M_i) < \delta$ 
%for any $m$, there are functions $\Phi_{m,-},\Phi_{m,+}$, constant on $M_i$ so that 
%$\Phi_m - \eps \leq \Phi_{m,-} \leq \Phi_m \leq \Phi_{m,+} \leq \Phi_M + \eps$.}
By the MLLT, 
{there exists a sequence of positive 
real numbers $\xi_n \to 0$ so that
for every $m \in \integers^d$ with $|m| < R L_n$,
$$
\left|
\int \phi(x)\Phi_m(T^nx)-L_n^{-d}\mu(\phi)\mu(\Phi_m) \fp(m/L_n)
\right|\leq
\xi_n L_n^{-d}.
$$
Summing this estimate for all $m$ as above and combining with 
\eqref{eq:proof1dec}, we obtain
\begin{align*}
& \left| \int \phi(x) \Phi(T^n x) d\mu- \sum_{{|m|}\leq R L_n} 
L_n^{-d} \mu(\phi) \mu(\Phi_m) \fp(m/L_n)
\right|  \\
& \leq 2 ||\phi||_\infty ||\Phi||_\infty \eps
+ R^d \xi_n.
\end{align*}
Hence in order to prove Theorem \ref{LLTIMMIx}(a), it suffices to verify that
\begin{equation}
\label{eq:thm2.3(a)est}
\limsup_{n \to \infty} \left|
L_n^{-d}
\sum_{{|m|}\leq R L_n} \mu(\Phi_m) \fp(m/L_n) 
- \bar \Phi \right| \leq \eps(2 R^d + 2 \bar \Phi).
\end{equation}
To this end,}
%$$\sum_{{|m|}\leq R L_n} \mu(\Phi_m) \fp(m/L_n) .$$
divide $\{ z \in \integers^d: |z|\leq R L_n\}$ into boxes $\cC_j$ of size $\delta L_n$.
Let $z_j$ be the center of $\cC_j.$
{First, since $\fp$ is uniformly continuous on the ball of radius
$R$, we can choose $\delta$ small so that for every $j$,
\begin{equation}
\label{eq:boxes1}
 \left| \sum_{m\in \cC_j} \mu(\Phi_m) \fp(m/L_n) - \sum_{m\in \cC_j} \mu(\Phi_m) \fp(z_j / L_n) \right| \leq \eps \mu(M \times \cC_j). 
\end{equation}
Next, by property {\bf (H)}, we have
\begin{equation}
\label{eq:boxes2}
 \left|
\left[\sum_{m\in \cC_j} \mu(\Phi_m) \fp(z_j / L_n) \right]- \fp(z_j / L_n) \brPhi \mu(M \times \cC_j)
\right| \leq \eps \mu(M \times \cC_j)
\end{equation}
Combining \eqref{eq:boxes1} and \eqref{eq:boxes2} and summing over $j$, we obtain
\begin{equation}
\label{eq:boxes3}
\left|
L_n^{-d}
\sum_{{|m|}\leq R L_n} \mu(\Phi_m) \fp(m/L_n)  
- \bar \Phi  \sum_j \fp(z_j / L_n) \delta^d \right| \leq
2 \eps R^d.
\end{equation}
Since \eqref{eq:boxes3} holds for an arbitrary small $\delta$ (provided that $n$ is large enough)
we can let $\delta\to 0$ thus
replacing the second sum by a Riemann integral.
Using 
\eqref{ChooseR}, we obtain
\eqref{eq:thm2.3(a)est} completing the proof of Theorem \ref{LLTIMMIx}(a)}}. 
\end{proof}

\begin{proof}[ Proof of Theorem \ref{LLTIMMIx}(b)]

{
In part (b) we prove a slightly stronger result, namely
we only assume that $\Phi_1 \in \bbG_O$. 
Let us fix
$\Phi_1 \in \bbG_O$, 
 $\Phi_2 \in \bbG_{AO}$ and $\eps >0$. We will show that there exists
$n_0$ and $B_0$ so that for all
$n > n_0$ and $B> B_0$, we have
\begin{equation}
\label{eq:thm3bend}
\left|
\frac{1}{\mu(V)} \int_V \Phi_1(x) \Phi_2(T^n x) d\mu - \bar \Phi_1 \bar \Phi_2 
\right| < \eps
\end{equation}
for any cube $V$ of size $B$ containing $M \times \{ 0 \}$. 
In fact, we will choose some auxiliary parameters $R = R(\Phi_1, \Phi_2, \eps)$ and
$\eps' = \eps'(\Phi_1, \Phi_2,R, \eps)$ before choosing $n_0
= n_0(\Phi_1, \Phi_2, \eps, R, \eps')$ and $B_0 = B_0(\Phi_1, \Phi_2, \eps, R, \eps')$.
To
simplify notation, let us write $z \in V'$ if $z \in \integers^d$
and $M \times \{ z\} \subset V.$
To prove \eqref{eq:thm3bend},
we use the decomposition
\begin{equation}
\label{GlToLoc}
\frac{1}{\mu(V)} \int_V \Phi_1(x) \Phi_2(T^n x) d\mu(x)
\end{equation}
$$=\frac{1}{\mu(V)}
\sum_{z\in V'} \sum_{w\in \integers^d} \int_M \Phi_1(y, z) \Phi_2(f^n y, w) 1_{\tau_n(y)=w-z} d\nu(y) . $$
We analyise the right hand side of \eqref{GlToLoc} in $6$ steps.\\
%Along the way, we choose parameters in the following order: $\eps, R$ }.\\

{\sc Step 1.} Take $R$ so large that for $n$ sufficiently large, the probability that $|\tau_n|>RL_n $ is smaller than 
$\DS \frac{\eps}{10 \|\Phi_1\|_\infty \|\Phi_2\|_\infty} $.
Such $R$ exists as in the proof of Theorem \ref{LLTIMMIx}(a). Then we can restrict the sum in 
\eqref{GlToLoc} to pairs such that $|w-z|\leq R L_n$ with an error which is at most $\frac{\eps}{10}.$ \\

{\sc Step 2.} Since $f$ satisfies the MLLT, we can replace the terms with $|w-z|\leq R L_n$
by
$$ \frac{1}{L_n^d}  \left(\int_M \Phi_1(y, z) d\nu(y) \right) \left( \int_M \Phi_2(y, w) d\nu(y) \right)
\fp\left(\frac{w-z}{L_n}\right)
$$
so that the total error we make in the sum \eqref{GlToLoc} does not exceed $\frac{\eps}{10}.$ 
Indeed, by the MLLT the error for any pair
$w,z$ with $|w-z|\leq R L_n$ is less than $\frac{\eps}{10R^dL_n^d}$ for
$n$ large. So far we derived
\begin{equation}
\bigg| 
\frac{1}{\mu(V)} \int_V \Phi_1(x) \Phi_2(T^n x) d\mu(x) - \label{eq:step2sum}
\end{equation}
$$\frac{1}{\mu(V)} \sum_{|w-z|\leq R L_n}
\frac{1}{L_n^d}  \left(\int_M \Phi_1(y, z) d\nu(y) \right) \left( \int_M \Phi_2(y, w) d\nu(y) \right)
\fp\left(\frac{w-z}{L_n}\right)\bigg| \nonumber
< \frac{2\eps}{10}. $$

{\sc Step 3.} Let $\tV$ be the cube with the same center as $V$ such that the size of $\tV$ equals to 
the size of $V$ plus $2L_n R.$ Denote
$$ \eps'=\frac{\eps}{10 \times  2^{d+1}R^d(\|\Phi_1\|_\infty+1) ( \|\Phi_2\|_\infty+1)(\|\fp\|_\infty+1)} . $$ 
Recall now the definition of $\bbG_{AO}$ with the corresponding 
functions $b(.), B_0(.)$ and set $\cG_{b,B}$. First, we let
$U$ be the cube centered at 0 and size $b(\eps'/2)$. Next,
assume that the size of $V$ is bigger than $B_0 := B_0(\eps'/2)$.
Given $\tz\in U$ let 
$$\cL_\tz=\{w\in V: w_i\equiv \tz_i \text{ (mod } b)\;\; \forall i=1,...,d\}.$$ Since the average proportion 
of $\cG :=\cG_{b(\eps'/2),  \text{ size }(V)}(\Phi_2)$ in $\bigcup_{\tz\in U} \cL_\tz$ is greater than $1-\eps'/2$ there exists $\brz$ such that  
the proportion of $\cG$ in $\cL_\brz$ is greater than $1-\eps'/2.$ 
Let $\{U_j\}$ be the collection of cubes of size $b$ 
whose centers are congruent to $\brz$ mod $b$ 
and which intersect $\tV.$ Note that $U_j$'s are disjoint and 
their union contains $\tV$.
Let $\fG$ be the union of $U_j$ which are completely contained in $V$
such that 
\begin{equation}
\label{Phi2GAO}
\left|\frac{1}{\mu(U_j)} \int_{U_j} \Phi_2(x) d\mu(x)-\brPhi_2\right|\leq \frac{\eps'}{2} 
\end{equation}
and $\fB$ be the complement of $\fG$ in $\tV$
($\fG$ and $\fB$ stand for "good" and "bad"). Since the size of $V$ is larger than $B_0$, we have
\begin{equation}
\label{FBLowDensity}
  \mu(\fB)\leq \eps' \mu(V) 
\end{equation} 
(we replaced $\eps'/2$ by $\eps'$ in the RHS to account for boundary effects,
that is, the cubes which are not completely
contained in $V$).
\\

{\sc Step 4.} If $n$ is sufficiently large, then the oscillation of $\fp$ on the boxes of size
$b(\eps'/2)/L_n$ is smaller than $\eps'$. Let us denote by $u_j$ 
the centers of $U_j$. Then by the definition of $\eps'$, 
we can replace \eqref{eq:step2sum} by 
$$  \frac{1}{\mu(V) L_n^d}  \sum_{z\in V'} \int_M
\Phi_1(y, z) d\nu(y) 
\sum_{j: d(z, u_j)\leq R L_n} \fp\left(\frac{u_j-z}{L_n}\right)
\sum_{w\in U_j} \int \Phi_2(y, w) d\nu(y)=
$$
\begin{equation}
\label{eq:step4sum}
 \frac{1}{\mu(V) L_n^d}  \sum_{z\in V'} \int_M
\Phi_1(y, z) d\nu(y) 
\sum_{j: d(z, u_j)\leq R L_n} \fp\left(\frac{u_j-z}{L_n}\right)
\int_{U_j} \Phi_2(x) d\mu(x)
\end{equation}
with an error smaller than $\frac{\eps}{10}.$
\\

{\sc Step 5.} Next, we estimate the error made when replacing $\int_{U_j} \Phi_2(x) d\mu(x)$ in \eqref{eq:step4sum} by $\mu(U_j) \brPhi_2$
for all $z$ and $j$.  First, the error
introduced by all $j,z$ so that $U_j \subset \fG$ is at most 
$$
\frac{1}{\mu(V) L_n^d} \sum_{z\in V'} \| \Phi_1\|_{\infty}
\sum_{j: d(z, u_j)\leq R L_n, U_j \subset \fG} \| \fp \|_{\infty} \frac{\eps'}{2}
\leq \frac{\eps}{10},
$$
where we used \eqref{Phi2GAO} and the definition of $\eps'$.
Secondly,  the error
introduced by all $j,z$ so that $U_j \subset \fB$ is at most 
\begin{align*}
& \frac{1}{\mu(V) L_n^d} \sum_{z\in V'} \| \Phi_1\|_{\infty}
\sum_{j: d(z, u_j)\leq R L_n, U_j \subset \fB} \| \fp \|_{\infty} 2 \mu(U_j) \| \Phi_2
\|_{\infty} \\
& \leq \frac{ 2 \| \Phi_1\|_{\infty} \| \fp \|_{\infty}  \| \Phi_2\|_{\infty}}
{\mu(V) L_n^d} \sum_{j:U_j \subset \fB } \mu(U_j) \sum_{z \in V':  d(z, u_j)\leq R L_n} 1 \\
& \leq  \frac{ 2^{d+1} R^d \| \Phi_1\|_{\infty} \| \fp \|_{\infty}  \| \Phi_2\|_{\infty}}
{\mu(V)} \mu(\fB)
\leq \frac{\eps}{10},
\end{align*}
where the penultimate inequality uses that there are at most $(2 R L_n)^d$ points
$z$ with $d(z, u_j)\leq R L_n$ and the last inequality follows from \eqref{FBLowDensity} and the definition of $\eps'$. Recalling steps 2 and 4, we arrive at
\begin{equation}
\label{eq:step5sum}
\bigg| \frac{1}{\mu(V)} \int_V \Phi_1(x) \Phi_2(T^n x) d\mu(x) - 
\end{equation}
$$
 \frac{1}{\mu(V) L_n^d}  \sum_{z\in V'} \int_M
\Phi_1(y, z) d\nu(y) 
\sum_{j: d(z, u_j)\leq R L_n} \fp\left(\frac{u_j-z}{L_n}\right)
\mu(U_j) \bar \Phi_2\bigg| 
< \frac{5\eps}{10}.
$$

{\sc Step 6.} Noting that $\mu(U_j)=b^d$, it remains to evaluate
$$ \frac{b^d}{\mu(V) L_n^d}  \sum_{z\in V'} \int_M
\Phi_1(y, z) d\nu(y) 
\sum_{j: d(z, u_j)\leq R L_n} \fp\left(\frac{u_j-z}{L_n}\right)
\brPhi_2.$$

For large $n$, the Riemann sum 
$\DS \frac{b^d}{L_n^d} \sum_{j: d(z, u_j)\leq R L_n} \fp\left(\frac{u_j-z}{L_n}\right)$
can be replaced by the integral 
$\DS \int_{|t|<R} \fp(t) dt $ with an error smaller than 
$$ \frac{\eps}{10\| \Phi_1 \|_{\infty}\| \Phi_2 \|_{\infty}}.$$ 
The last integral is in the interval $(1-\frac{\eps}{10\| \Phi_1 \|_{\infty}\| \Phi_2 \|_{\infty}}, 1]$ by our choice of $R$. Thus we arrive at 
$$ \left| \frac{1}{\mu(V)} \int_V \Phi_1(x) \Phi_2(T^n x) d\mu(x)- \frac{1}{\mu(V)}
\sum_{z\in V'} \int \Phi_1(y,z) d\nu(y) \bar \Phi_2 \right|\leq \frac{7\eps}{10}. $$
 Finally, since $\Phi_1\in \bbG_O$, we have 
$$ \left| \frac{1}{\mu(V)}
\int_V \Phi_1(x) d\mu(x) -\bar \Phi_1 \right| < \frac{\eps}{10 \|\Phi_2\|_{\infty}}.$$ The last two displays imply \eqref{eq:thm3bend}. Theorem \ref{LLTIMMIx} (b) follows.}
\end{proof}

The proof of Theorem \ref{SLLTIMMIx} is  similar 
to the proof of Theorem \ref{LLTIMMIx} (a)
except that we need to consider boxes around $D_n$ rather than around the origin. {In fact, the proof of Theorem \ref{SLLTIMMIx} (b) is 
simpler than the
proof of Theorem \ref{LLTIMMIx} (b) because all points $w$ are good and we don't need the set $\fB$.}

{
\begin{proof}[Proof of Theorem \ref{ThALLT-Mix}]
The proof of Theorem \ref{ThALLT-Mix} is similar to that of
Theorem \ref{LLTIMMIx}. 
Recall that in the proof of Theorem \ref{LLTIMMIx} (a), we used the MLLT for 
$m \in \cC_j$,
where $\cC_j$ is a box of size $\delta L_n$ within distance $RL_n$ from the origin.
We could treat the contribution of $m$ with $|m| \geq L_n R$ as an error term by \eqref{ChooseR}.

 We start the proof of Theorem \ref{ThALLT-Mix} (a)
by assuming without loss of generality that $\phi$ is 
supported on $D_k \times \{k \}$ for some
$k \in \integers_+^{d_1} \times \integers^{d_2}$ as in the
beginning of the
proof of Theorem \ref{LLTIMMIx} (a). Note that now we will have 
to study both cases of $k \in \cB$ and $k \notin \cB$. We again 
choose $R$ as in \eqref{ChooseR} 
except that we replace $\eps$ by
$$
\eps' = \frac{\eps}{3 A (1+\|\Phi\|_\infty)(1+ \|\phi\|_\infty) },
$$
where $A$ is defined by \eqref{eq:nuzbound}.
Then the contribution of points $m$ with $|m| >R L_n $ is negligible.
Then we again partition the set
$m
\in \integers_+^{d_1} \times \integers^{d_2}$ 
with $|m|\leq RL_n $, into boxes $\cC_j$ of size $\delta L_n$. 
Let us write $j \in \cJ_1$ if $\dist(\cC_j, \cB) \geq \delta L_n$ and $j \in \cJ_2$
otherwise. Let us also write $d = d_1 + d_2$.

First we prove that the contribution of boxes $\cC_j$, $j \in \cJ_2$ is negligible. To this end, apply \eqref{eq:cBcond} with 
$$\eta = \frac{\eps'}{R^d\|\fp\|_{\infty}}.
$$
This gives us $\xi$ and $Q_0$. Now we choose $\delta < \xi R / (d+1)$ and $n$ big so that
$R L_n > Q_0$. Then by \eqref{eq:cBcond},
\begin{align}
 \sum_{j \in \cJ_2}
| \cC_j | 
&\leq | \{ \bk \in \integers_+^{d_1} \times \integers^{d_2},
|\bk | \leq RL_n: \dist(\bk, \cB) < (d+1)\delta L_n \} | \nonumber\\
& \leq \eta (RL_n)^d.\label{eq:thm2.8apriori}
\end{align}

%Recall that we assume that $\phi\in\bbL$, in particular, $\phi$ is compactly supported. Let
%$\hnu$ denote the uniform distribution (with respect to $\mu$) on the $\supp(\phi).$
Let $\DS \cB^*=\bigcup_{j\in \cJ_2} \cC_j$ be $\delta L_n$ neighborhood of $\cB$ in the 
box of size
$R L_n$ around the origin and $\DS \cG^*=\bigcup_{j\in \cJ_1} \cC_j$.
We have
$$ \left|  
 \sum_{j\in \cJ_2}
\int_{D_k \times \{k \}} \phi(x) \Phi_1(T^n x) 
\one_{z(T^n(x)) \in \cC_j}d\mu(x) \right|\leq 
 \|\phi\|_\infty \|\Phi\|_\infty  \nu_k(T^n x\in \cB^*) $$
\begin{equation}
\label{eq:Thm28asum2}
\leq \|\phi\|_\infty \|\Phi\|_\infty  
[ \nu_k(D_k) - \nu_k(T^n x\in \cG^*) ] 
\end{equation}
Applying the AMLLT (specifically, using
\eqref{AbsLLT} with $\phi = \psi = 1$ in case $k \notin \cB$
 and \eqref{AbsLLTB} with $\phi = 1/\nu_k(D_k)$, $\psi = 1$ in case $k \in \cB$),
we obtain that for large $n$ large
$$ 
\frac{\nu_k(T^n x\in \cG^*)}{\nu_k(D_k)} =L_n^{-d} \sum_{ j\in \cJ_2} (\delta L_n)^d \left[\fp(z_j/L_n)+\kappa_{j,n}\right]=
 \sum_{ j\in \cJ_2} \delta ^d \left[\fp(z_j/L_n)+\kappa_{j,n}\right]
$$
where $z_j$ are the centers of $\cC_j$ and the error term 
$\DS \sum_{j\in \cJ_2} \kappa_{j, n}$ can be made as small as we 
wish by taking $n$ large. Making $\delta$ small we can make the last sum arbitrarily close to
$$ \int_{\cG^*/L_n} \fp(z) dz=1-\int_{|z|\geq R} \fp(z) dz-\int_{\cB^* / L_n}\fp(z) dz. $$
Both integrals on the right hand side of the last display are smaller than $\DS \frac{\eps}{3
A\|\phi\|_\infty\| \Phi\|_\infty}$:
the first one
due to our choice of $R$, and the second one 
due to our choice of $\eps', \eta$ and \eqref{eq:thm2.8apriori}. 
Now combining the last two displays, we obtain
$$  \nu_k(D_k)
-\nu_k(T^n x\in \cG^*) \leq \frac{3 \eps}{3 \|\phi\|_\infty \|\Phi\|_\infty} $$
 which combined with \eqref{eq:Thm28asum2} shows
that the contribution of $\cJ_2$ is indeed negligible. 

The computation of the main term, namely the contribution of boxes $\cC_j$, $j \in \cJ_1$ is done along the lines of the proof of Theorem \ref{LLTIMMIx} (a).
Indeed, the AMLLT is applicable on those boxes. Theorem \ref{ThALLT-Mix} (a) follows.\\

The proof of Theorem \ref{ThALLT-Mix} (b)
is again similar to the proof of Theorem 
\ref{LLTIMMIx} (b)
so we only explain the differences and use the same notations as there.
In fact, in this proof we only use \eqref{AbsLLT} and won't need \eqref{AbsLLTB}.

We still prove \eqref{eq:thm3bend}, but now we allow $B_0$ to depend on $n$,
which is allowed by Definition \ref{def:gg}. 
Now
\eqref{GlToLoc} reads 
\begin{align}
&\frac{1}{\mu(V)} \int_V \Phi_1(x) \Phi_2(T^n x) d\mu(x) \nonumber\\
=&\frac{1}{\mu(V)}
\sum_{z\in V'} \sum_{w\in \integers^d} \int_{D_z} \Phi_1(y, z) \Phi_2(f^n y, w) 1_{\tau_n(y)=w-z} d\nu_z(y). \label{GlToLoc2}
\end{align}

First we show that the sum over $z$ that are close to the set $\cB$
is negligible.
To this end, we first apply \eqref{eq:cBcond} with 
$$\eta = \frac{\eps}{20 A2^d\| \Phi_1\|_{\infty}  \| \Phi_2\|_{\infty}}.$$
This gives us $\xi$ and $Q_0$. Now for an $n$, we will choose
$B_0$ so large that $B_0 > Q_0$ and $B_0 \xi > 2RL_n + b$.

Now let $V$ be a cube of size $B \geq B_0$ containing $M \times \{ 0 \}$.  Then $V$ is contained
in another box $\hat V$ of size at most $2B$ centered at the origin.
The contribution of $z \in V'$ with $\dist(z, \cB) < \xi B$ 
to the sum in \eqref{GlToLoc2}
is now bounded by
\begin{align*}
&\frac{1}{\mu(V)} \sum_{z \in V', \dist(z, \cB) < \xi B} \nu_z(D_z) \| \Phi_1\|_{\infty}  \| \Phi_2\|_{\infty}\\
&
\leq \frac{ A}{\mu(V)} \| \Phi_1\|_{\infty}  \| \Phi_2\|_{\infty} 
| \{  z \in \integers_+^{d_1} \times \integers^{d_2}: D_z \times \{ z\} \subset \hat V, 
\dist(z, \cB) < \xi B\}| \\
&\leq 
 \frac{ A}{\mu(V)} \| \Phi_1\|_{\infty}  \| \Phi_2\|_{\infty} \eta (2B)^d 
\leq \frac{\eps}{10},
\end{align*}
where the first inequality in the last line
follows from \eqref{eq:cBcond} applied to the box $\hat V$
and the last inequality follows
from the estimate
 $\mu(V) \geq \frac{B^d}{2}$ and the definition of $\eta$.

Thus the sum for $z$ with $\dist(z, \cB) < \xi B$ is negligible and
instead of \eqref{GlToLoc2} it is sufficient to study 
$$
\frac{1}{\mu(V)}
\sum_{z\in V', \dist(z, \cB) \geq \xi B} \sum_{w\in \integers^d} \int_{M} \Phi_1(y, z) \Phi_2(f^n y, w) 1_{\tau_n(y)=w-z} d\nu(y)
$$
(note that if $\dist(z, \cB) \geq \xi B$, then in particular $D_z = M$,
$\nu_z = \nu$).

Now we repeat Steps 1--6 of the proof of 
Theorem 
\ref{LLTIMMIx} (b) with two minor changes. First, in Step 1, 
we use the AMLLT instead of the MLLT.
Indeed, the AMLLT
is applicable because if $|w-z| < RL_n$, then
recalling the inequality $B \xi > 2RL_n + b$,
we also have $\dist(w, \cB) \geq RL_n$. Second,
in all of Steps 1--6, each
sum over $z$ is replaced by sum over $z$ with 
$\dist(z, \cB) \geq \xi B$. 
Since the sum over $z$ with $\dist(z, \cB) \geq \xi B$ is negligible as shown above,
this change introduces negligible additional errors to the estimates of Steps 1--6.
This completes the proof of Theorem 
\ref{ThALLT-Mix} (b).
\end{proof}

}

\subsection{Global global mixing for approximations.}

\begin{proof}[Proof of Proposition \ref{PrLocPert}:]
  Let $A=\{x: Tx\neq \tT x\}.$ Then
  \begin{equation}
    \left| \int_V \Phi_1(x) [\Phi_2(T^n x)-\Phi_2(\tT^n x)] d\mu \right| \label{eqlocpert1}
  \end{equation}  
$$   \leq 2 ||\Phi_1||_\infty ||\Phi_2||_\infty
  {\mu}(x: \exists 0\leq k<n : T^k x\neq \tT^k x)
  \leq 2 ||\Phi_1||_\infty ||\Phi_2||_\infty n \mu(A). $$
 
%{\footnotesize
%  $$ \left| \int_V \Phi_1(x) [\Phi_2(T^n x)-\Phi_2(\tT^n x)] d\mu \right|\leq 2 ||\Phi_1||_\infty ||\Phi_2||_\infty
%  \nu(x: \exists 0\leq k<n : T^k x\neq \tT^k x) $$}
 % $$ \leq 2 ||\Phi_1||_\infty ||\Phi_2||_\infty n \mu(A). $$ 
Since the last expression does not grow as $\mu(V)\to \infty$ we obtain the result.
\end{proof}  

%Next we prove .

\begin{proof}[Proof of Theorem \ref{ThInfAppGGM}]
(a) We will show that for each $n$
\begin{equation}
\label{LargeVolComp}
 \lim_{\mu(V)\to\infty} 
 \frac{1}{\mu(V)} 
\left[\int_V \Phi_1(x) \Phi_2(\tT^n x) d\mu-\int_V \Phi_1(x) \Phi_2(T^n x) d\mu\right]=0. 
\end{equation}
Note that for each $n,$ $T^n$ is continuous almost everywhere. 
Fix an arbitrary $n \in \mathbb N$ and $\varepsilon >0$.
An induction on $n$ shows 
that for $\nu$ a.e. $y$
there exists $\delta=\delta(y, \eps)$ such that
if 
$\{y'_k\}_{k=0}^n$ is a sequence such that 
$d(y'_0, y)<\delta$ and $d(f (y'_k), y'_{k+1})\leq \delta$, then
$$d(f^n (y), y'_n)\leq \eps\quad \text{and}\quad \tau_n(y)=\sum_{k=0}^{n-1} \tau(y'_k). $$
We will say that $y$ is $(\delta, \eps)$-good. Let $B_{n, \delta, \eps}$
be  the set of not $(\delta, \eps)$-good points.
Choose $\delta = \delta(\eps)$ so small that the measure of $B_{n, \delta, \eps}$
 is less than $\eps$
{(such $\delta$ exists by the continuity of the measure
as $\nu(\cap_{\delta >0} B_{n, \delta, \eps} ) = 0$).} 
Next, choose $R = R(\eps)$ such that for $|z|> R$ we have
$ \mu(A_{z, \delta})\leq \eps.$ 

We are now ready to establish \eqref{LargeVolComp}. 
To fix ideas let us suppose that $V$ is a cube of size $L.$
We split $V$ into two parts. Let
$V_1$ be the set of points 
$x=(y,z) \in V$ for which 
\begin{itemize}
\item
there is some $k \leq n$ so that the absolute value of the $z$-coordinate of $\tilde T^k x$ is less than $R$, or
\item there is some $k \leq n$ so that $\tilde T^k x \in \cup_{z} A_{z, \delta}$, or
\item $y \in B_{n, \delta, \eps}$.
\end{itemize}
Denote $V_2=V-V_1.$ Assume
%which before time $n$ {and under the 
%dynamics $\tilde T$ either
%visit}
%$\{|z|\leq R\} $ or $A_{z, \delta}$ for some $z$ or the projection the orbits belongs to
%$B_{n, \delta, \eps}$ and $V_2=V-V_1.$\footnote{ Isn't it enough to say that the first
%coordinate of the point belongs to $B_{n, \delta, \eps}$ (i.e. no need to exclude the orbit entering
%$B_{n, \delta, \eps}$)?}
$|\tau|\leq r.$ Then the orbit of points from $V$ are within distance $nr$ from $V.$
It follows that 
$$\mu(V_1)\leq (R+r)^d+2 (L+n r)^d n \eps+ \eps,$$
{\hskip-1.5mm
where the three summands above corresponds to the three cases in the definition of
$V_1$ above.}
Thus the contribution of $V_1$ to \eqref{LargeVolComp} is less than
$$ \left[(R+n r)^d+2 (L+n r)^d n \eps{+\eps}\right] ||\Phi_1||_\infty ||\Phi_2||_\infty.  $$
On the other hand if $(x,z)\in V_2$ then 
$d(T^n (x,z), \tT^n (x,z))\leq \eps$ and so 
the contribution of $V_2$ is less
$ \mu(V) ||\Phi_1||_\infty \Osc(\Phi_2, \eps)$ 
where 
$$ \Osc(\Phi, \eps)=\sup_{d(x', x'')\leq \eps} |\Phi(x')-\Phi(x'')|. $$
It follows that for large $L$
$$ \frac{1}{\mu(V)} 
\left|\int_V \Phi_1(x) \left[\Phi_2(\tT^n x) -\Phi_2(T^n x) \right] d\mu\right|$$
$$\leq
 3 n \eps ||\Phi_1||_\infty ||\Phi_2||_\infty+||\Phi_1||_\infty \Osc(\Phi_2, \eps). $$
Since $\eps$ is arbitrary, we can take the limit $\eps\to 0$ obtaining \eqref{LargeVolComp}.
This completes the proof of part (a). 

To prove part (b) we may assume that $V$ is such that 
$ \sup_V z \leq (1+\delta(\eps))\inf_V z.$ If this does not hold, we subdivide $V$ into smaller boxes
and remove the central part (which has small relative measure). Next we use \eqref{GoodDensity}
to replace 
$$  \frac{1}{\tmu(V)} 
\left[\int_V \Phi_1(x) \Phi_2(\tT^n x) d\tmu\right]
\text{ by }  
\frac{1}{\mu(V)} 
\left[\int_V \Phi_1(x) \Phi_2(\tT^n x) d\mu\right] $$
and then conclude as before using \eqref{LargeVolComp}.
\end{proof}

\subsection{Local global mixing for approximations.}
\label{sec:3.2}
\begin{proof}[Proof of Theorem \ref{ThLGM-App}]
  Due to (M2), it suffices to show that for each $\fm\in \fM$ and for each $\Phi\in \bbG_U$,
  we have 
$\fm(\Phi(\tT^n x))\to\brPhi$ as $n \to \infty$.

{
Let us fix some $\fm \in \fM$, $\Phi\in \bbG_U$ and 
$\eps >0$. We will show that for $n$ large enough,
\begin{equation}
\label{eq:Thm2.13pf}
|\fm(\Phi(\tT^n x)) - \brPhi|\leq (4 + C + 
\| \Phi\|_{\infty})\eps,
\end{equation}
where $C$ is the constant in (M5). To do so, we will choose a small parameter
} $\delta = \delta(\eps) >0$ and
large numbers $\brn = \brn (\eps)$, $R = R( \delta, \brn)$, 
$n = n(\eps, \delta, \brn,R) {\gg \bar n}$.
{We will apply  $\tT$ for $n - \bar n$ iterations. Then we will show that
during the remaining time $\bar n$, we can well approximate $\tT$ by $T$.

First, we prove the following preliminary estimate: for the already fixed $\eps >0$ there is
$\bar n$ so that for all $\fm' \in \fM_\eps$ and all $z\in \integers^d$
\begin{equation}
\label{eq:prelimest}
\left| \int \Phi(f^\brn y, z+\tau_\brn(y)) d\fm'(y)-\brPhi\right|\leq \eps. 
%\left|{\fm'}(\Phi(f^\brn y, z+\tau_\brn(y)))-\brPhi\right|\leq \eps. 
\end{equation}}
Indeed, \eqref{eq:prelimest} follows from 
(M4) and precompactness of the set $\{\Phi_l\}$ (where 
$\Phi_l(x) = \Phi(x,l)$), as in to the proof of Theorem \ref{LLTIMMIx}(a).

{Next, by equicontinuity of $\{\Phi_l\}$, there exists $\delta = \delta(\eps) \leq \eps$}
such that if 
$d(x', x'')<\delta$, then $|\Phi(x')-\Phi(x'')|<\eps.$

Let { us now define $\tfm=\tT_*^{n-\brn} \fm$}.
We claim that if $n$ is large enough, then
\begin{equation}
\label{LastLeg}
|\tfm(\Phi(T^\brn x))-\fm (\Phi(\tT^n x))| 
={
|\tfm(\Phi(T^\brn x))-\tfm(\Phi(\tT^{\brn} x))| }
\leq { 3\eps}. 
\end{equation}
{
The equation in \eqref{LastLeg} follows from the definition of $\tfm$. To prove the inequality,
let us write
\begin{eqnarray}
&&| \tfm(\Phi(T^\brn x) - \Phi(\tT^{\brn} x)) | \nonumber \\
&\leq &\tfm\left[
\one_{|z(x) > R|}
\left| \Phi(T^\brn x) - \Phi(\tT^{\brn} x)\right| \right] \label{eq:smallR} \\
&+&
\tfm\left[
\one_{|z(x) \leq R|}
\left|\Phi(T^\brn x) - \Phi(\tT^{\brn} x)\right| \right].\label{eq:bigR} 
\end{eqnarray}
Here, $R = R(\delta, \brn)$ is chosen so that}
$$ \tfm(x: |z(x)|>R\text{ and } d(\tT^\brn x, T^\brn x)> \delta)<\delta $$
(such $R$ exists by (M3)). 

{By the choice of $\delta$ and $R$, \eqref{eq:smallR} is bounded above by}
$$ 2 ||\Phi||_\infty \tfm(x: |z(x)|>R, d(\tT^\brn x, T^\brn x) > \delta)+\eps\leq 2 ||\Phi||_\infty \delta+\eps 
{\leq 2 \eps}
$$
{(note that we can assume without loss of generality that $\delta < \eps / ( 2 ||\Phi||_\infty)$).
Next, (M6) implies that \eqref{eq:bigR} is smaller than $\eps$ if $n$ is large enough.
We have verified \eqref{LastLeg}.

By \eqref{LastLeg}, it remains to estimate
$ \tfm(\Phi(T^\brn x))$. 
%Applying (M5) with $\fm$ and 
%$\eps/\| \Phi\|_{\infty}$ instead of $\eps$, and 
Assuming that 
$n - \bar n>n_0(\fm, \eps)$,
 where $n_0$ is defined in property (M5), we have}
$$\begin{aligned} \tfm(\Phi(T^\brn x))&=\sum_j (c_j' \fm_j'(\Phi(T^\brn x))+c_j'' \fm_j''(\Phi(T^\brn x)))\\
&=
\sum_j c_j' \fm_j'(\Phi(T^\brn x)) { + \cE}, \end{aligned}$$
where $\cE$ is an error term satisfying $|\cE| <
{ C} \eps$.
{ By  (M5) and \eqref{eq:prelimest}}, for each $j$
$$ |\fm_j'(\Phi(T^\brn x))-\brPhi|\leq \eps. $$
{ Next, by (M5),} 
$$1\geq \sum_j c_j'=1-\sum_j c_j''\geq 1-\eps. $$
{ Combining the last three displays, we derive
$$
|\tfm(\Phi(T^\brn x)) - \bar\Phi| \leq (1 + \| \Phi \|_{\infty} + C) \eps,
$$
which togather with \eqref{LastLeg} implies \eqref{eq:Thm2.13pf}.
The theorem follows.}
\end{proof}

  \section{Mixing for flows.}
\label{ScFlows}

The results of Section \ref{ScResults} can be extended to flows. Here, we briefly summarize the
necessary changes in the definitions and theorems.

Let $X=M\times\integers^d$,
$x = (y,z) \in X$ and $G^t(y,z)=(g^t(y), z+\tau^t (y))$ for $t\geq 0$ (or for $t \in \reals$) where $X$ is as before, and
$g^t$ preserves a probability measure $\boldsymbol\kappa.$
We equip $X$ with the measure $\lambda$ which is the product of $\boldsymbol \kappa$ and the counting measure
on $\integers^d.$ We define the spaces $\bbL, \bbG_O, { \bbG_{AO}}, \bbG_U$ as before. 

The definition of local-global and global-global mixing is
analogous, we just need to replace $T^n$ by $G^t$ and let $t \rightarrow \infty$ instead of $n \rightarrow \infty$.
Noting that the second coordinate of $X$ is still discrete, we can extend the definition of MLLT and shifted MLLT
by simply replacing 
$f^n$, $\tau_n$, $z_n^0 \in \integers^d$, $L_n$, $D_n$ and $n \rightarrow \infty$
by $g^t$, $\tau_t$, $z_t^0 \in \integers^d$, $L_t$, $D_t$ and $t \rightarrow \infty$ respectively. Similarly, we define 
AMLLT by replacing $\tT^n$, $z_n^0$, $L_N$ and $\lim_n$ by $ \tG^t$, $z_t^0$, $L_t$ and $\lim_t$ respectively. With these
adjustments,  
one can extend Theorems \ref{LLTIMMIx}--\ref{ThALLT-Mix} as well as their proofs to the case of
flows.

In the remaining results, the map $\tilde T$ was approximated by a periodic map $T$. In case of flows, we can define
similar approximations by, say, comparing the two flows up to time $1$. First, the following analogue of 
Proposition \ref{PrLocPert} holds:

\begin{proposition}
\label{PrLocPertFlow}
If $G^t$ is a flow on a space $X$ preserving an infinite measure $\boldsymbol{\kappa}$ which is 
global global mixing { with respect to either $\bbG_{AO}$
or $\bbG_U$}
and
if $\tG^t{ (x)}$ equals to $G^t {(x)}$ 
for { all} $t \in [0,1]$ 
{ and all $x$} away from a finite measure set, then $\tG$ is global global mixing.
\end{proposition}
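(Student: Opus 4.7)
The plan is to transport the argument of Proposition \ref{PrLocPert} from discrete to continuous time, with the key new ingredient being the use of the flow property to bootstrap the hypothesized agreement on $[0,1]$ to agreement at all positive times off a set whose measure grows only linearly in $t$.

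First I would introduce the finite-measure set $A = \{x \in X : G^s x \neq \tG^s x \text{ for some } s \in [0,1]\}$, which by hypothesis satisfies $\boldsymbol{\kappa}(A)<\infty$. The main observation is then a simple bootstrap: for any $t>0$, write $t = n + s$ with $n = \lfloor t \rfloor$ and $s \in [0,1)$, and prove by induction on $n$ that if $G^j x \notin A$ for every integer $0 \le j \le n$, then $\tG^j x = G^j x$ for all such $j$ and moreover $\tG^t x = \tG^s(G^n x) = G^s(G^n x) = G^t x$. The induction step uses only the semigroup identity $\tG^{j+r} = \tG^r \circ \tG^j$ together with the defining property of $A$. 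As an immediate consequence, the "bad set''
$$B_t := \{x \in X : \tG^t x \neq G^t x\} \subset \bigcup_{j=0}^{n} G^{-j}A,$$
so by $G^s$-invariance of $\boldsymbol{\kappa}$,
$$\boldsymbol{\kappa}(B_t) \le (n+1)\boldsymbol{\kappa}(A) \le (t+1)\boldsymbol{\kappa}(A).$$

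Next, as in \eqref{eqlocpert1}, for any $\Phi_1, \Phi_2 \in \bbG$ and any cube $V$,
$$\left| \int_V \Phi_1(x)\bigl[\Phi_2(\tG^t x)-\Phi_2(G^t x)\bigr] d\boldsymbol{\kappa} \right| \le 2\|\Phi_1\|_\infty \|\Phi_2\|_\infty (t+1)\boldsymbol{\kappa}(A).$$
Since the right-hand side is independent of $V$, dividing by $\boldsymbol{\kappa}(V)$ and sending $\boldsymbol{\kappa}(V) \to \infty$ with $t$ fixed yields
$$\lim_{\boldsymbol{\kappa}(V)\to\infty} \frac{1}{\boldsymbol{\kappa}(V)} \int_V \Phi_1(x)\bigl[\Phi_2(\tG^t x)-\Phi_2(G^t x)\bigr] d\boldsymbol{\kappa} = 0.$$

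Finally I would conclude by combining with the global-global mixing of $G^t$: applying the double limit in the definition (first $\limsup/\liminf$ as $\boldsymbol{\kappa}(V) \to \infty$, then $\lim$ as $t \to \infty$), the displayed vanishing shows that the $\limsup$ and $\liminf$ for $\tG^t$ coincide with those for $G^t$, and the latter equal $\brPhi_1 \brPhi_2$ by assumption. No step looks genuinely difficult; the only thing to be careful about is the bootstrap, where one must apply the flow identity at both integer and fractional times, making crucial use of the fact that $\boldsymbol{\kappa}$ is preserved by $G^t$ so that the union bound $\boldsymbol{\kappa}(\bigcup_{j=0}^n G^{-j}A) \le (n+1)\boldsymbol{\kappa}(A)$ is valid.
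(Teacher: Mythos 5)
Your proof is correct and follows essentially the same route as the paper: the paper obtains Proposition \ref{PrLocPertFlow} by substituting $A=\{x:\exists t\in[0,1]: G^t x\neq \tG^t x\}$ and $t$ for $n$ in the argument of Proposition \ref{PrLocPert}, which is exactly the union bound $\boldsymbol{\kappa}(B_t)\le (t+1)\boldsymbol{\kappa}(A)$ you derive. Your explicit bootstrap induction merely spells out what the paper leaves implicit.
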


We can obtain a proof of Proposition \ref{PrLocPertFlow} from the proof of Proposition \ref{PrLocPert} by 
replacing $A = \{ x: Tx \neq \tT x \}$ by $A = \{ x: \exists t \in [0,1]: G^t (x) \neq \tG^t (x) \}$,
and $n$ by $t$ in 
\eqref{eqlocpert1}. %and $n$ by $\lfloor t \rfloor$ in \eqref{eqlocpert2}. 

Similarly, in the definition of good and very good approximation, besides the obvious changes, we require that for all
$y \notin A_z$ and for all $t \in [0,1]$, $d( \tG^t(y,z), G^t(y,z)) < \eps$. Then we have

\begin{theorem}
\label{ThInfAppGGMFlow}
Suppose that $\{ \tau_t(y) : y \in M, t \in [0,1] \}$ is bounded and the set
$$\{y \in M: g_t(y) \text{ and } \tau_t(y) \text{ are continuous at $y$} \}$$
has full measure for any fixed $t$. 

(a) 
If $\tG$ is very well approximated by $G$ at infinity and $G$ is global global mixing 
with respect to either {$\bbG_{AO}$} or $\bbG_U$, then 
$\tG$ is global global mixing with respect to the same space.

(b) 
If $\tG$ is well approximated by $G$ at infinity and $G$ is global global mixing 
with respect to $\bbG_U$, then so is
$\tG$.
\end{theorem}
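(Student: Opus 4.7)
The plan is to mimic the proof of Theorem \ref{ThInfAppGGM} almost verbatim, substituting the discrete iterates $T^n,\tT^n$ by the time-$t$ maps $G^t,\tG^t$, and the one-step bound $|\tau|\le r$ by the uniform bound $r:=\sup_{y\in M,\,s\in[0,1]}|\tau_s(y)|<\infty$ supplied by the hypothesis. Using the flow property $G^t=G^{t-\lfloor t\rfloor}\circ G^1\circ\cdots\circ G^1$, the comparison of $G^t$ with $\tG^t$ reduces to chaining the one-step comparison, valid on $[0,1]$ by the very-well-approximation hypothesis, over the $\lceil t\rceil$ unit subintervals of $[0,t]$.

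For part (a), fix $t>0$ and $\eps>0$ and target the flow analogue of \eqref{LargeVolComp}. Using the almost-everywhere continuity of $g^s$ and $\tau^s$ for each $s\in[0,1]$, an induction on these unit subintervals produces a bad set $B_{t,\delta,\eps}\subset M$ of $\bkappa$-measure less than $\eps$ outside of which every trajectory $\{y'_s\}_{s\in[0,t]}$ whose one-step jumps remain within $\delta$ of the actual flow satisfies $d(g^t y,y'_t)<\eps$ together with an exact cocycle equality for $\tau_t$; exactness uses the $\integers^d$-valuedness of $\tau^s$ which, together with continuity, forces local constancy. Next, pick $R=R(\eps)$ so that $\bkappa(A_{z,\delta})<\eps$ for $|z|>R$, and split a cube $V$ of side $L$ as $V=V_1\sqcup V_2$, where $V_1$ collects those $x=(y,z)$ such that some iterate $\tG^k x$ for $k\in\{0,1,\dots,\lceil t\rceil\}$ has $|z|\le R$, lies over $\bigcup_z A_{z,\delta}$, or has $y\in B_{t,\delta,\eps}$. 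Since orbits drift by at most $\lceil t\rceil r$, one gets $\mu(V_1)\le (R+\lceil t\rceil r)^d+2(L+\lceil t\rceil r)^d\lceil t\rceil\eps$, while on $V_2$ the chained approximation gives $d(G^t x,\tG^t x)<\eps$, so its contribution is at most $\mu(V)\|\Phi_1\|_\infty\Osc(\Phi_2,\eps)$. Dividing by $\mu(V)$, then sending $L\to\infty$ and finally $\eps\to 0$, produces the flow \eqref{LargeVolComp}, which combined with the global-global mixing of $G$ transfers mixing to $\tG$.

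Part (b) then follows from the same argument applied after first subdividing $V$ into boxes with $\sup_V z\le(1+\delta(\eps))\inf_V z$, removing the central slice of small relative $\tmu$-measure, and using \eqref{GoodDensity} to replace $\tmu$ by $\mu$, exactly as in Theorem \ref{ThInfAppGGM}(b). The only genuinely new point, and therefore the main obstacle, is the iteration of the one-step very-well-approximation over the unit subintervals of $[0,t]$: one must control the measure of the bad set for the chained approximation uniformly in the chain length. This is precisely what the almost-everywhere continuity hypothesis on $g^s,\tau^s$ combined with the $\integers^d$-valuedness of $\tau^s$ delivers, so no new analytic input is required beyond carefully keeping track of the $\lceil t\rceil$-fold concatenation of exceptional sets.
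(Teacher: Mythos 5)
Your proposal is correct and follows essentially the same route the paper intends: the paper itself only states that the proof of Theorem \ref{ThInfAppGGMFlow} is obtained from that of Theorem \ref{ThInfAppGGM} by minor changes, and you supply exactly those changes — chaining the unit-time very-well-approximation over $\lceil t\rceil$ subintervals, bounding the drift by $\lceil t\rceil\, r$ with $r=\sup_{s\in[0,1]}|\tau_s|$, and running the same $V_1/V_2$ decomposition and, for part (b), the same subdivision plus \eqref{GoodDensity}. No discrepancy with the paper's argument.
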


The proof of Theorem \ref{ThInfAppGGMFlow}
is similar to that of Theorem \ref{ThInfAppGGM} with minor changes as before. 
We leave the details to the reader.

Finally, the assumptions (M1)--(M6) can analogously be formulated for flows. Namely, (M1) claims that
 $\tG^t$ preserves $\fM$ for every $t$, (M2) is unchanged and all changes in (M3)--(M6) 
 amount to replacing $T, \tT$ by $G, \tG$ are as before. With these
 changes, and with a similar proof, we can derive the analogue of Theorem 
\ref{ThLGM-App}.

\section{Preliminaries on Lorentz gas and related systems.}
\label{BilPrel}

In the remaining part of the paper, 
we give several examples of systems satisfying the assumptions of Section \ref{ScResults}.
In those examples we have a {point mass} 
moving in $\reals^d$ with a number of scatterers 
removed and having elastic reflections from the boundary. The motion between the collisions
will be either free (such as in case of Lorentz gas) 
or subject to a field. In this case the most interesting question from physical
point of view is to study mixing properties of the continuous time system, however, mathematically
one could also study the mixing properties of the collision map, too. 
We will also use natural examples 
below to illustrate several subtleties associated to the notions of local global and global global mixing.

In our examples, the system having approximate symmetry will be denoted by $\tT$ while
its symmetric approximation will be denoted by $T.$ In the continuous time setting, the corresponding systems will be denoted by {$\tG^t$ and $G^t$,} respectively.

For the reader's convenience, we summarize some basic facts about Lorentz gas in this section.
We will focus on the notions and results that are most important for studying global mixing properties.
Everything in this section (as well as many other important results) can be found in
\cite{CM06}.
Thus we do not give more references. Much of the theory presented in this section
has been extended to billiards subject to external fields (see \cite{Ch01, C08, DZ13}).
Additional references
will be given later when we discuss specific examples.

Let $O_1,\dots, O_J$ be disjoint convex subsets of the 2-torus $\mathbb T^2$ with $\mathcal C^3$ 
boundary with non-vanishing curvature. These sets are also called scatterers.
Let us consider a point particle that flies freely (with speed $1$)
in the interior of $\cD_0 = \mathbb T^2 \setminus \cup O_j$, 
and, upon reaching the boundary, undergoes
specular reflection (angle of incidence equals angle of reflection). This dynamics is called the Sinai billiard flow
($g^t$). It preserves the Lebesgue measure on $\cD_0 \times \cS^1$ (position and velocity). Let
$\boldsymbol\kappa$
be the invariant Lebesgue measure normalized so as it is a probability measure.
Identifying the torus with $[0,1]^2$, and extending the scatterer configuration periodically to the plane,
we define the billiard flow on
$\cD = \reals^2 \setminus \cup_{\ell \in \integers ^2} \cup_{j = 1}^J (O_j+ \ell)$ as before. We call the billiard flow in this infinite domain Lorentz gas and denote it by $G^t$. It preserves $\lambda$, the product
of $\boldsymbol\kappa$ and the counting measure on $\integers^2$.
We assume
{that the scatterer configuration is such that the free flight is bounded (a.k.a. finite horizon 
condition).}

The billiard flow induces a billiard map (or collision map) by the Poincar\'e section taken at collisions. 
Namely, the phase space of the billiard map is
$$
M = \{ (q,v) \in \partial \cD_0 \times \cS^1, \langle v, n \rangle \geq 0\},
$$
where
$n$ is the inward normal vector of $\partial \cD$ at $q$ (that is, $q$ is the point of collision and $v$ is the 
post-collisional velocity). 
The standard coordinates on $M$ are $r$: arc length parameter for $q$ and $\phi$: the angle between 
$n$ and $v$ ($\phi \in [-\pi/2, \pi/2]$ with clockwise orientation).
The billiard map is denoted by $f : M \to M.$  It preserves the invariant measure 
$\nu = c \cos \phi \; dr d\phi$, where $c$ is a normalizing constant. Similarly, the billiard map 
of the Lorentz gas is $T: X \to X$, where $X = M \times \integers^2,$ $T(y,z) = (f(y), z + \tau(y)) $ and
$\tau \in \integers^2$ is the vector connecting the center of the cells where two consecutive collisions take place.
It preserves the invariant measure 
\begin{equation}
\label{eq:sec5mudef}
\mu= \nu \times \mbox{counting}.
\end{equation}

The map $f$ is hyperbolic: there are stable and unstable conefields, 
$\cC^{s}_y, \cC^{u}_y \subset \mathcal T_y M$
such that $Df (\cC^s_y) \subset \cC^s_{f(y)}$,  $Df^{-1} (\cC^u_y) \subset \cC^u_{f^{-1}(y)}$. 
The cones are transversal, that is the angle between any stable vector (an element of $\cC^s_y$ for
some $y$) and any unstable vector is uniformly bounded below by a positive number. 
(In fact there exist some constants $0<c_1<c_2$
so that $\cC^{u}$ can be defined as 
\begin{equation}
\label{eq:uniformcones}
c_1 \leq d\phi / dr \leq c_2
\end{equation} 
$\cC^{s}$ can be defined as $-c_2 \leq d\phi / dr \leq -c_1$ for all $y \in M$.)

The map $f$ is piecewise smooth with singularities at grazing collisions. Furthermore, as the expansion
and the distortion are
unbounded near grazing collisions, it is common to introduce artificial singularities 
$$\mathbb H_k = \{(r, \phi): \phi = \pm \pi/2 \mp k^{-2} \},$$ for $k \geq k_0. $
We call a smooth curve 
{of uniformly bounded curvature} (un)stable if at each point its tangent vector belongs to the (un)stable cone.
An (un)stable curve is homogeneous if 
it does not cross any singularity, genuine or artificial.
We call $W$ a local stable (unstable) manifold if $f^n(W)$ is a stable (unstable) curve for any $n \geq 0$
($n \leq 0$, respectively).

For any unstable curve $W$ and point $y \in W$, we define the Jacobian of $f^n$ on $W$ at $y$ by
$\cJ_W f^n (y) = \| D_xf^n (dy) \| / \| dy \|$ with $dy \in \cT_y W$.
The uniform hyperbolicity implies that there are constants $\Lambda >1$ and $C$ so that 
$\cJ_W f^n (y) \geq C \Lambda^n$ for $n >0$ (and similarly for stable curves and $n < 0$).
Furthermore, after
the above extra partitioning of the phase space, one has the following distortion bounds.
Let $W$ be a homogenenous unstable curve, such that $f^{-n}(W)$ is also homogeneous unstable for $n = 1,...,N-1.$
Then for any $y_1,y_2 \in W$ and $n = 1,...,N-1$ we have
\begin{equation}
\label{BDWu}
e^{-C|W|^{1/3}} \leq \frac{\cJ_W f^{-n} (y_1)}{\cJ_W f^{-n} (y_2)} \leq e^{C|W|^{1/3}}.
\end{equation}
Here, as well as in the sequel, $C$ denotes some finite number depending only on the dynamical
system (and not on the curve $W$ or $n$). Furthermore, the value of $C$ is not important and may change
from line to line.

Given $x\in M,$ the homogenous stable (unstable) manifold of $x$ is the set of points $y$ such that
$f^n y$ and $f^n x$ belong to the same continuity component for all $n\geq 0$ (respectively, for $n\leq 0$). {(Here, in 
the definition of the continuity component,
both genuine and articifial singulairies are accounted for.)}
The homogenous stable (unstable) manifold of $x$ will be denoted by $W^s(x)$ ($W^u(x)$).
It is known that $W^s(x)$ is homogenous stable curve and $W^u(x)$ is homogenous unstable curve.

For any point $y \in M$, we denote by $r_{u}(y)$ ($r_s(y)$) the distance between $y$ and the singularity set,
measured along the unstable (stable) manifold. More generally, given an unstable curve $W$ and $y \in W$,
there is a homogenenous unstable curve $W' \subset f^n(W)$ that contains $f^n(y)$. $W'$ is cut 
by $f^n(y)$ into two pieces, 
the length of the shorter piece is denoted by $r_n(y)$.

The measure of points $y$ such that $r_{u}(y)=0$ or $r_s(y)=0$ is zero. 
It is also true that the measure of points 
having short (un)stable manifolds is small, namely
\begin{equation}
\label{GlobalWsShort}
  \nu (y: \min \{ r_u(y), r_s(y) \} < \eps) \leq C\eps. 
\end{equation}

A pair $\ell=(W, \rho)$ is called a standard pair, if $W$ is a homogeneous
unstable curve and $\rho$ is a probability measure on $W$ satisfying
$$ \bigg| \log \frac{ d \rho}{d \mes}(y_1) - \log \frac{ d \rho}{d \mes}(y_2) \bigg| 
\leq C \frac{|W(y_1,y_2)|}{|W|^{2/3}}, $$
where $|W(y_1,y_2)|$ is the length of the segment of $W$ bounded by $y_1$ and $y_2$.
Here, and also in the sequel, $\mes$ stands for the Lebesgue measure.

The image of a standard pair by the dynamics is a weighted sum of standard pairs
(the image of a homogeneous unstable curve is a family of homogeneous unstable curves
and the regularity of the density of $\rho$ is preserved).
A weighted sum of standard pairs is called a standard family. Namely, a standard family is
a (possibly uncountable) collection of standard pairs 
$\mathcal G=\{ (W_{a}, \nu_a) \}_{ a \in \mathfrak A}$
and a probability measure 
$\eta = \eta_{\mathcal G}$ on $ \mathfrak A$. Such a standard family $\cG$ induces a measure
on $M$ by
\begin{equation}
\label{StFamMes}
 \nu_{\mathcal G} (.) = \int _{\mathfrak{A}} \nu_a (. \cap W_{a}) d \eta_{\mathcal G}(a). 
\end{equation} 

For standard families, the $Z$-function is defined as
\[ \mathcal Z_{\mathcal G} = \sup_{\varepsilon >0} 
\frac{1}{\eps} \int_{\mathfrak A} \nu_a (r_0 < \eps) d \eta_{\mathfrak A}(a). \]

Important special cases are standard pairs ($\mathfrak A$ has a single element
{$\ell$, in which case we simply write $\nu_{\cG} = \nu_{\ell}$}) or 
the decomposition of { the invariant measure}
$\nu$ into conditional measures on unstable manifolds. 
It can be shown that the conditional measures have the required
regularity and the $Z$-function of this family is finite.

Standard pairs are stretched by the dynamics due to expansion and are cut by singularities. The 
next result tells us that "the expansion wins over fragmentation", that is, most 
of the weight is carried by long curves.

\begin{lemma}[Growth Lemma]
\label{LmGrowth}
There are constants $\theta<1, C_1, C_2$ such that for a standard family
$\mathcal G=\{ (W_{a}, \nu_a) \}, a \in \mathfrak A$, and
$\mathcal G_n=  f^n (\mathcal G)$, we have 
\[ \mathcal Z_{\mathcal G_n} < C_1 \theta^n \mathcal Z_{\mathcal G}
+ C_2.\]
\end{lemma}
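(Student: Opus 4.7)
The natural plan is to prove a one-step contraction of the form
$$\cZ_{\cG_1} \leq \theta_1\, \cZ_\cG + C',$$
and then iterate. Because a single iteration of $f$ need not produce a constant $\theta_1<1$ (the singular set together with the homogeneity strips cuts an unstable curve into many pieces), I would first pass to a high power $f^N$ for which the uniform expansion $\Lambda^N$ beats the total complexity of the singular set of $f^N$, obtaining a contracting inequality $\cZ_{f^N\cG}\leq \theta_0\,\cZ_\cG + C$ with $\theta_0<1$. Iterating this bound $\lfloor n/N\rfloor$ times and summing the resulting geometric series yields the claimed bound with $\theta = \theta_0^{1/N}$.

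To establish the one-step inequality, I would fix a standard pair $(W,\rho)$ and decompose $f(W)=\sqcup_j W^{(j)}$ into its homogeneous components. For a point $y'\in W^{(j)}$, the quantity $r_0(y')$ is small for one of two reasons: either $y'$ is close to the image of an endpoint of $W$ (an \emph{old} endpoint, pushed forward by $f$), or $y'$ is close to an endpoint of $W^{(j)}$ created by a singularity curve or a homogeneity boundary $\bbH_k$ (a \emph{new} endpoint). The contribution of old endpoints is controlled by the expansion bound $\cJ_W f\ge \Lambda$ combined with the distortion estimate \eqref{BDWu}, and yields a term of order $\Lambda^{-1}\cZ_\cG$. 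The contribution of new endpoints is a sum over cut curves of the $\rho$-measure of their $\eps$-neighborhoods: there are only finitely many non-homogeneity singular branches to worry about, while the sum over the homogeneity strips $\bbH_k$ converges because the $\cos\phi$ factor in $\nu$ forces the measure of the intersection with $\bbH_k$ to be $O(k^{-3})$, summing to a uniformly bounded constant independent of $\cG$.

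Passing from a single standard pair to a general standard family $\cG=\int_\fA \ell_a\, d\eta(a)$ is a straightforward integration: applying the preceding one-step inequality to each $\ell_a$ and integrating against $d\eta$ preserves the form of the estimate since both $\Lambda$, the distortion constants, and the new-endpoint bound are uniform in $a$. Iterating the $f^N$-inequality and absorbing the resulting constants gives
$$\cZ_{\cG_n} \leq \theta_0^{\lfloor n/N\rfloor}\cZ_\cG + C\sum_{j=0}^{\lfloor n/N\rfloor}\theta_0^j \leq C_1\theta^n\cZ_\cG + C_2.$$

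The main obstacle is the balance in the choice of $N$: one must quantify the ``complexity'' of the singular set of $f^N$ (the maximal number of homogeneous continuity components of $f^N$ meeting at a common point) and verify that it grows more slowly than $\Lambda^N$. This is precisely Chernov's sub-exponential complexity / one-step expansion estimate for dispersing billiards, and it is the one non-formal input; once it is in hand, the rest of the proof reduces to the bookkeeping sketched above.
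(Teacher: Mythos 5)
The paper does not actually prove this lemma: it is quoted verbatim from the standard reference \cite{CM06} (the first growth lemma, \S 5.10 there), so there is no in-paper argument to compare against. Your sketch is the standard proof from that reference: pass to a power $f^N$ for which uniform expansion beats the complexity of the singularity set, split the contributions to $\cZ_{f\cG}$ into old endpoints (controlled by $\Lambda^{-1}\cZ_\cG$ via expansion and distortion) and new endpoints (finitely many genuine singularity branches plus a convergent sum over homogeneity strips), integrate over the family, and iterate. You also correctly isolate the one non-formal input, namely the sub-exponential complexity bound for dispersing billiards.

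One detail is slightly off. The summability over the homogeneity strips $\bbH_k$ does not come from the $\cos\phi$ factor in the invariant measure $\nu$: the measures $\rho$ in a standard family live on unstable curves and their densities are controlled by the standard-pair regularity condition, not by $\cos\phi$. The correct mechanism is that the expansion of $f$ on the component of $f(W)$ landing in $\bbH_k$ is at least $ck^2$ (since the expansion factor blows up like $1/\cos\phi \sim k^2$ there), so the preimage in $W$ of the $\eps$-neighborhood of the corresponding new endpoint has length at most $C\eps k^{-2}$, and $\sum_k k^{-2}<\infty$ gives the uniform constant. With that correction the sketch is the standard argument.
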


We also consider standard pairs on the phase space of the Lorentz gas, by shifting $W$ with a 
vector $m \in \integers^2$, where $\ell = (W,\rho)$ is a standard pair for the Sinai billiard. In this case, 
we write $[\ell] = m$.

The Growth Lemma implies 
{that}
for any unstable curve $W$ and for any $n \geq 0$,
$$
\mes (y \in W: r_n(y) < \eps) < C \eps,
$$
where $\mes$ denotes the Lebesgue measure on $W$.

We will also use the following
important consequence of the Growth Lemma ({which is a local version of \eqref{GlobalWsShort}}
see \cite[\S 5.12]{CM06} as well as 
the a proof of \eqref{ManyLongWs} in \S \ref{SSM6}).
Given {an unstable} curve $\gamma$ and a positive number $\delta$, let
$\DS \gamma_\delta=\{x\in\gamma: r_s(x)\geq \delta\}$.
Then there is a constant $K^*$ such that
\begin{equation}
\label{GL-ShMan}
\mes(\gamma-\gamma_\delta)\leq K^* \delta. 
\end{equation}

Another application of the Growth Lemma requires an extra definition. 
Fix a large constant $\brZ.$ In particular we require that $\brZ\geq 2C_2$ where $C_2$ is the constant from
the Growth Lemma. In practice it is convenient to choose $\brZ$ so large that there is a standard family
$\cG$ with $\cZ_\cG<\brZ$ such that $\nu_\cG$ is the invariant measure $\nu.$
We say that a standard family $\cG$ is {\em proper} if $\cZ_\cG\leq \brZ.$ Then the Growth Lemma implies 
that there exists $n_0$ such that for any $n\geq n_0$ and 
for any measure $\brnu$ defined by a proper standard family $\cG$,
the measure 
$\brnu_n(\phi)=\brnu(\phi\circ f^n)$ also corresponds to a proper standard family (namely $f^n \cG$).

Another crucial property of 
partition of $(M, \nu)$ into stable (unstable) manifolds is {\em absolute continuity}.
We refer the reader to \cite[\S 8.6]{BP07} for a comprehensive overview of absolute continuity of stable
and unstable
laminations. Here we just summarize the results for dispersive billiards we are going to use.
Let $W_1$ and $W_2$ be two unstable curves which are close to each other. Let
$$\tW_j=\{x\in W_j: W^s(x)\cap W_{3-j}\} $$
and let $\pi_s: \tW_1\to \tW_2$ be the stable holonomy
$\pi_s(x)=W^s(x)\cap W_2.$ Then $\pi_s$ is absolutely continuous and its Jacobian equals to $J(x, \pi_s x)$
where
(\cite[Equation (5.23)]{CM06}) 
\begin{equation}
  \label{HolJac}
J(x, \pi_s x)=\prod_{n=0}^\infty \frac{\cJ_{f^n W_1}(f^n x)}{\cJ_{f^n W_2} (f^n \pi_s x)}.  
\end{equation}  
Next, \cite[Theorem 5.42]{CM06} tells us that there is a constant $C$ such that
\begin{equation}
\label{HoloBnd}
e^{-C \left(d^{1/3} (x, \pi_s x)+\beta\right)}
\leq J(x, \pi_s x)\leq
e^{C\left(d^{1/3} (x, \pi_s x)+\beta\right)},
\end{equation}
where $\beta$ is the angle between the tangent vector to $W_1$ at $x$ and the tangent vector
to $W_2$ at $\pi_s x.$

A similar statements hold for the unstable holonomy.

Let us list several standard consequences of this fact (\cite{BP07}).

 Given an unstable curve $\gamma$ and a positive number $\delta$, consider the Hopf brush
$%\gamma_\delta=\{x\in\gamma: r_s(x)\geq \delta\} \quad \text{and}\quad 
\DS \Lambda_\delta=\bigcup_{x\in \gamma_\delta} W^s(x).$ 
Consider the measure $\hnu$ defined by
$$\DS \hnu(A)=\int_{\gamma_\delta} \mes_{W^s} \left(W^s(x)\cap A\right) 
d \mes_{\gamma} (x).$$ 
Let $\nu_{\Lambda_\delta}$ denote the restriction of
$\nu$ to $\Lambda_\delta.$ 
Suppose that $|\gamma|\geq 2 K^*\delta$ so that \eqref{GL-ShMan} implies 
that $\Lambda_\delta\neq\emptyset.$ 
Then there is a constant $\kappa_1=\kappa_1( \delta)$ such that 
\begin{equation}
  \label{AC3}
\kappa_1\leq 
\frac{d\hnu}{d\nu_{\Lambda_\delta}}\leq \kappa_1^{-1}. 
\end{equation}
From the foregoing discussion it is not difficult to see that
 there is a constant $\kappa_2=\kappa_2(\delta)$ such that for each $\gamma$ of length
 at least $2 K^*\delta$,
\begin{equation}
  \label{AC4}
\nu(\Lambda_\delta(\gamma))\geq \kappa_2. 
\end{equation}   
 
Another consequence of \eqref{AC3} is that if $A$ is a set of measure zero, then
\begin{equation}
\label{AC2}
\text{for $\nu$ almost every} \;x,\; \mes(W^s(x)\cap A)=\mes(W^u(x)\cap A)=0.
\end{equation}

{
We finish this section by commenting on the case of unbounded free flight
(infinite horizon). The preliminaries discussed in this section extend to that case, too.
The billiard map is local-global and global-global mixing just like in the case of finite horizon
 (see Section
\ref{SSLG}) as the MLLT holds with scaling $L_n = \sqrt{n \log n}$ \cite{SzV07}. We have little doubt 
that the
same holds in continuous time, too, but we are not aware of any explicit proof of the MLLT
in the literature. To study the perturbed models as in \S\S
\ref{SSSLocPert}--\ref{sec:fields} one would need a more serious departure from the
case of finite horizon (but see \cite{CD09b, PT20} for some results in these directions). In the
rest of this paper, we
only study the case of finite horizon.
}

%In order to observe mixing of a measure given by a standard pair, we clearly need to wait at least until
%the length of the standard pair becomes $O(1)$. In fact, there is no need to wait longer: if 
%$\fM_\eps$ is defined as all standard families 

\section{Examples}
\label{secEx}
Here we describe several examples satisfying the assumptions of Section \ref{ScResults}.
{Each time we use the MLLT or its variants (shifted MLLT,
AMLLT), we choose $L_n = \sqrt n$ and,  unless noted otherwise, $\fp$ a centered Gaussian density.
%(or a conditional centered Gaussian density 
%conditioned on $z_1>0,...,z_{d_1} >0$ in case $d_1 >0$ in the
%AMLLT). 
We formulated the results of Section 
\ref{ScResults} with general 
$L_n$ and $\fp$ because there are other natural examples 
(e.g. the infinite horizon Lorentz gas or interacting particle systems studied e.g. in 
\cite{PGSz12})
whose global mixing properties could be approachable by our methods.
}

\subsection{Lorentz gas}
\label{SSLG}

The mixing local
limit theorem holds for Lorentz gas with finite horizon
in both discrete \cite{SzV04} and continuous setting \cite{DN16}.
Accordingly Theorem \ref{LLTIMMIx} applies to both Lorentz collision map and Lorentz flow,
and so, both systems enjoy both local global {
mixing with respect to $\bbG_{O}$} and global global mixing with respect to 
$ \bbG_{AO}.$ 
%In the case the horizon is infinite, to the best of our knowledge, only the MLLT for the collision map 
%is available \cite{SzV07}. Therefore the discrete time system enjoys 
%both local global and global global mixing with respect to $\bbG_O.$  
%It is quite likely that 
%the MLLT holds also in the continuous time system and so Theorem \ref{LLTIMMIx} applies 
%in that case as well.

One can also consider a Lorentz tube, where instead of motion on the plane the particle moves
on the strip with a periodic configuration of convex scatterers removed. As before 
\cite{SzV04, DN16} give MLLT in both discrete and continuous setting and so the system
enjoys both local global and global global mixing with respect to 
$\bbG_O.$

\subsection{Local Perturbations of Lorentz gas.}
%\label{SSSmPertLG}
%Next consider local perturbations of the Lorentz gas.
%\subsubsection{Local perturbations}
\label{SSSLocPert}
%(a) {\em Local perturbations.} 
Consider a billiard in a domain which is periodic 
outside of some ball.
If the limiting periodic configuration has finite horizon
(or equivalently, the perturbed configuration has finite horizon)
then the conditions of Propositions \ref{PrLocPert} 
and \ref{PrLocPertFlow} are satisfied and so the system enjoys global global 
mixing. On the other hand, local perturbations of the Lorentz gas do not have to be local 
global mixing. 
Indeed, we can trap particles in a bounded part of the phase space. For example, 
by allowing non-convex scatterers, one can arrange that the system
has a stable elliptic orbit, so that the set $\cB$  of bounded orbits has positive measure.
Let $\cB_L$ be the set of orbits which always stay within distance $L$ from the origin. Take 
$\phi$ such that $\int_{\cB_L} \phi d\mu>0.$ Take two functions $\Phi_1, \Phi_2\in \bbG$ 
such that 

(i) $\Phi_2>\Phi_1$ and moreover

(ii) $\Phi_2-\Phi_1\geq 1$ inside the ball of radius $L;$ 

(iii) $\brPhi_2=\brPhi_1.$ 

In this case
$$ \int \phi[(\Phi_2-\Phi_1)\circ \tT^n] d \mu\geq \int_{\cB_L} \phi d\mu $$
does not tend to 0, so it is impossible that both
$$ \int \phi(x) \Phi_2(\tT^n x)d\mu(x)\to \mu(\phi)\brPhi_2
\text{ and }
\int \phi(x) \Phi_1(\tT^n x)d\mu(x)\to \mu(\phi)\brPhi_1. $$

However, the system remains local global mixing if the configuration is a finite
perturbation (i.e. finitely many scatterers discarded, finitely many new ones included)
of a periodic Lorentz gas such that the scatterers in the
entire configuration (including the perturbed part) are strictly convex, disjoint and have $\mathcal C^3$
boundary. We call such a perturbation a {\em mild perturbation}. 
{ Without loss of generality, we can assume that the fundamental domain is large enough so
that outside the cell at the origin, the system is periodic. Thus we are in the setup of \S \ref{sec:AMLLT}, with $d_1 = 0$, $d_2 = 2$, $\cB = \{ 0 \}$, 
$M$ the phase space of the billiard map on any cell but zero, $D_0$
the phase space of the billiard map in the zeroth cell and
the measures $\nu$ and $\nu_0$ are the usual  
measures on $M$ and $D_0$, as defined in Section \ref{BilPrel}
(in condinuous time, we need to define $M$ and $D_0$ as the phase space of the flow, restricted to the
same cells as before and consider the invariant physical measures on them, denoted by 
$\boldsymbol\kappa$ in Section
\ref{BilPrel}).

Mildly perturbed Lorentz gases
are  local global mixing with respect to $\bbG_O$ 
and global global mixing with respect to
$ \bbG_{AO}$ as implied
by Theorem \ref{ThALLT-Mix} and  the following.}
\begin{theorem}
\label{thm:ggLorentzpert}
The mildly perturbed periodic Lorentz gas satisfies the AMLLT.
\end{theorem}
\begin{proof}

 The proof is similar to (but easier than) the proof of Proposition 3.8 in \cite{DN16} 
 so we provide only a sketch of the argument. 
 
We begin with discrete time. In the proof we will use letters with
tildes to denote the objects
associated to the mildly perturbed Lorentz gas, and the same letter without tildes will
refer to periodic (unperturbed) system.

%Let $\brnu_{\bz_1 \sqrt n}$ be as in the definition of AMLLT.
{ Let $\brnu_{\phi, w}$ be the measure defined by either \eqref{DefNuZ} or 
\eqref{DefNuZB}}.
The {\em global} central limit theorem for mildly perturbed periodic Lorentz gas is proved in
\cite[Theorem 1]{DSzV09}.  Thus there is a positive definite matrix $D$ such that 
$$ 
 \brnu_{ \phi, w}
\left(\frac{\ttau_n}{\sqrt{n}}\in \Omega
+ \frac{w}{\sqrt n}\right)\to 
\nu(\phi)
\iint_\Omega \fg (u) du $$
\noindent
{ as $n \to \infty$},
where $\fg$ is the density of the centered Gaussian distribution with covariance matrix $D$
and $\Omega \subset \reals^2$ is a set whose boundary has zero Lebesgue measure
{ and the convergence is uniform for $\phi$
with bounded Lipschitz norm.}

%Given a Lipschitz function $\psi$,
We need to evaluate
$$ I_n=\brnu_{ \phi, w}
\left(\psi(\tx_n) 1_{\ttau_n=
\lfloor  \bz\sqrt n  \rfloor  - w
}\right).$$
{To simplify the notation, we drop the subscript of
$\brnu$ and write $z_n =  \lfloor  \bz\sqrt n  \rfloor  - w$.}
Take $\delta_t\ll 1$ and denote $n_2=\delta_t n,$ $n_1=n-n_2.$

Let the measure $\nu^{\brz}$ be the normalized version of the restriction
of $\tT^{n_1*} \brnu$ to the cell $\brz$. That is, if
$p_{n_1}(\brz)=\brnu(z \circ \tT^{n_1} =\brz)$ and $A\subset M$, then 
$$
\nu^{\brz} (A) = \frac{1}{p_{n_1}(\brz)} \brnu \left(\tx: \tT^{n_1} (\tx) \in \left(A \times  \{ z = \brz\} \right)\right).
$$
Then we have the decomposition
$$
I_n=\sum_{\brz\in \integers^2 - \{ 0\}} p_{n_1}(\brz) \nu^\brz(\psi(\tx_{n_2})1_{\ttau_{n_2}=z_n-\brz}) + \heps_1
$$
where $\heps_1$ is an error term corresponding to the set of points $\tx$ so that $z \circ \tT^{n_1}(\tx) = 0$
and we assumed that all perturbations are in the zeroth cell.

%We decompose
%$$ I_n=\sum_{\brz\in \integers^2} p_{n_1}(\brz) \nu^\brz(\psi(\tx_{n_2})1_{\ttau_{n_2}=z_n-\brz}) $$
%where $p_{n_1}(\brz)=\brnu({z \circ \ttau_{n_1}}=\brz)$ and $\nu^\brz$ is the measure on the set
%$\{z=\brz\}$ given by
%$$ \nu^\brz(A)=\frac{\brnu(\tx_{n_1}\in A,\; \ttau_n=\brz)}{p_{n_1} (\brz)}. $$
Choose $K\gg 1$ and consider the following approximation
\begin{equation}
\label{Viable}
 I_n=\sum_{|\brz-z_n|\leq K\sqrt{n_2}} p_{n_1}(\brz) \nu^\brz(\psi(x_{n_2})1_{\tau_{n_2}=z_n-\brz})+
\heps_1 + \heps_2
\end{equation}
where $\heps_2$ is an error term. Note that there are no tildes inside 
$\nu^\brz(\cdot).$ That is we pretend that the particle moves in the unperturbed environment
for the last $n_2$ collisions. The error $\heps=\heps_1+\heps_2$ comes from two sources:

(A) There is a contributions from the cells with $|\brz-z_n|>K\sqrt{n_2}$ and

(B) the particle may visit the perturbed region for some $k \in [n_1, n]$.

Given $\eps$ we can choose $\delta_t$ so small and $K$ so large that 
both (A) and (B) 
have contributions which is less than $\frac{\eps}{n}$ similarly to \cite[\S 6.2]{DN16}. Note that
\cite[Lemma 2.8(b)]{DN16}, which is extensively used in this step, is formulated for the Lorentz
tube and thus is not directly applicable here. 
However, we can replace it by \cite[Lemma 4.8(b)]{DN17}, which is valid in a much 
more general setting, including the Lorentz gas. 

Returning to the main term in \eqref{Viable} we can use the MLLT for the periodic Lorentz gas to conclude that
\begin{equation}
\label{LLT-Last}
 \nu^\brz(\psi(x_{n_2}) 1_{\tau_{n_2}=z_n-\brz})\approx
\frac{1}{n_2} \fg \left(\frac{z_n-\brz}{\sqrt{n_2}}\right)\nu(\psi). 
\end{equation}
Let us divide the set 
$\{z: |z-z_n| \leq K \sqrt{n_2}\}$ into boxes $B_j$ of size $\delta_s \sqrt{n}$ where $\delta_s\ll \delta_t.$
Then,
$$ \sum_{|\brz-z_n|\leq {K} \sqrt{n_2}}  p_{n_1}(\brz) \nu^\brz(\psi(x_{n_2})1_{\tau_{n_2}=z_n-\brz})$$
\begin{equation}
\label{Mesh}
  \approx
\frac{\nu(\psi)}{\delta_t n}  \sum_j \sum_{\brz\in B_j} p_{n_1}(\brz) \fg\left(\frac{\brz-z_n}{\sqrt{n_2}}\right). 
\end{equation}
Since the oscillation of $\DS \fg\left(\frac{\brz-z_n}{\sqrt{n_2}}\right)$ on $B_j$ is small, 
we can replace 
it by
$\DS \fg\left(\frac{z^{(j)}-z_n}{\sqrt{n_2}}\right)$ where $z^{(j)}$ is the center of $B_j.$
Accordingly
$$ \sum_{\brz\in B_j} p_{n_1}(\brz) \fg\left(\frac{\brz-z_n}{\sqrt{n_2}}\right)\approx
\fg\left(\frac{z^{(j)}-z_n}{\sqrt{n_2}}\right) \sum_{\brz\in B_j} p_{n_1}(\brz)=$$
\begin{equation}
\label{BoxSum}
\fg\left(\frac{z^{(j)}-z_n}{\sqrt{n_2}}\right) \brnu(\ttau_{n_1}\in B_j). 
\end{equation}
The global CLT for the mildly perturbed Lorentz gas and the fact that $z^{(j)}$ are close to $z_n$ for all $j$
imply that
\begin{equation}
\label{BoxCLT}
 \brnu(\ttau_{n_1}\in B_j)\approx \delta_s^2 \fg(\bz) 
\end{equation} 
Combining \eqref{Viable}--\eqref{BoxCLT} we obtain
$$ I_n=\frac{\fg(\bz) \nu(\psi)}{n} \sum_j \frac{\delta_s^2}{\delta_t} 
 \fg\left(\frac{z^{(j)}-z_n}{\sqrt{n_2}}\right) . $$
The last sum is the Riemann sum of the integral of a Gaussian density over the set $\{|z|<K\}.$
Accordingly taking $K$ large and choosing $\delta_s$ small to make the mesh sufficiently fine,
we can make the last sum as close to 1 as we wish. This completes the sketch of 
proof of the AMLLT in the discrete time case.

The continuous time case is similar but we need to use the MLLT for flows proven in \cite{DN17}.
\end{proof}

\subsection{Lorenz gas in a half strip.} 
Consider a Lorentz gas in a half strip, i.e. in $\reals^+\times [0,1]$ with a periodic configuration of convex
scatterers removed. (By periodicity we mean that if $\cS$ is a scatterer in our configuration and
$\cS_{\pm}:=\cS \pm (1,0)$,
then $\cS_+$ is in the scatterer configuration and if 
$\DS \cS_- \subset (\reals^+\times [0,1])$, then $\cS_-$ also belongs to the configuration).

{ 
Similarly to the mildly perturbed Lorentz gas, we are in the setup of \S \ref{sec:AMLLT}, 
now with $d_1 = 1$, $d_2 = 0$, $\cB = \{ 1 \}$.}
Using \cite[Theorem 2]{DSzV09} and proceeding as in the proof of Theorem \ref{thm:ggLorentzpert}, we have

\begin{theorem}
Lorentz gases in half strips  satisfy the AMLLT with
{  
$\fp$ being the probability density of the absolute value of a centered Gaussian random variable. }
\end{theorem}

{ Thus by Theorem \ref{ThALLT-Mix}, the Lorentz gas in a half strip satisfies both 
 local global mixing with respect to $\bbG_O$ 
and global global mixing with respect to
$ \bbG_{AO}.$}

\subsection{Lorenz gas in a half plane.} 
{
Consider a Lorentz gas in a half plane, i.e. in $\reals^+\times \reals$ with a periodic configuration of convex
scatterers removed. (By periodicity we mean that if $\cS$ is a scatterer in our configuration, then
$\cS + (1,0)$, $\cS \pm (0,1)$ are also in the configuration. If  
$\DS \cS - (1,0) \subset (\reals^+\times \reals)$, then $\cS - (1,0)$ also belongs to the configuration).

Similarly to the mildly perturbed Lorentz gas and to the Lorentz gas in a half strip, 
we are in the setup of Section 
\ref{sec:AMLLT}, now with $d_1 = 1$, $d_2 = 1$, $\cB = \{ 1 \} \times \integers$.
Using \cite[Theorem 4]{DSzV09} and proceeding as in the proof of Theorem \ref{thm:ggLorentzpert}, we have

\begin{theorem}
Lorentz gases in the half plane satisfy the AMLLT with
$\fp$ being the density at time 1 of the Brownian motion with diffusion matrix of the Lorentz process
reflected from the $y$ axis.
%of $(|\cX|, \cY)$, where $(\cX, \cY)$ is a centered Gaussian random vector. 
\end{theorem}

Thus by Theorem \ref{ThALLT-Mix}, the Lorentz gas in a half plane satisfies both local global mixing with respect to $\bbG_O$ 
and global global mixing with respect to
$ \bbG_{AO}.$
}

\iffalse
\begin{proof}
 The proof is similar (but easier) than the proof of Proposition 3.8 in \cite{DN16} so we provide only a sketch of 
the argument. 

To fix our idea we consider the collision map for Lorentz half tube. Let $z_n$ be a sequence such that
$\fz_n$ is not too small $\fz_n\geq \delta \sqrt{n}. $ We first use the estimates of \cite[\S 6.2]{DN16} to show
that for each $\eps$ there exists $\kappa$ and $n_0$ such that for $n\geq n_0$
$$ \Prob(z(\tT^n x)=\fz_n\text{ and } z(\tT^k x)\leq \frac{\delta}{2} \; \sqrt{n}
\text{ for some } k\in [(1-\kappa)^n, n])\leq \frac{\eps}{\sqrt{n}} $$
and then use the mixing LLT for $T^{\kappa n}$ to obtain the mixing LLT for $\tT^n.$
\end{proof}
\fi

\subsection{Lorentz gas with external fields}
\label{sec:fields}

\subsubsection{Lorentz gas in asymptotically vanishing potential fields}
\label{ex:vanpotential}

In this example we consider the same configuration of scatterers as in Example \ref{SSLG}
but assume that the motion between collisions is subject to the potential
$$ \ddot{q}=-\nabla U .$$
We suppose that the first three derivatives of $U$ are uniformly bounded and 
that
\begin{equation}
\label{VanishPot}
\lim_{|q|\to\infty} U(q)=0, \quad \lim_{|q|\to\infty} \nabla U(q)=0 .
\end{equation}
An example of such system is given by the Coulomb potential 
\begin{equation}
\label{EqCou}
 U(q)=\frac{\mathbf{e}}{|q|}. 
\end{equation} 
For the Coulomb potential it is natural to assume that the origin is contained in the center of one of the scatterers. In this case $U$ is bounded.

In any case our system is Hamiltonian 
preserving the energy $H={\frac12} v^2+U(q).$ 
{ 
Sinai billiards with external fields were studied in \cite{Ch01, C08}. 
First, note that
the phase space of both the map and the flow is the same as in case of no external field.
Next, we note that the flow $\tG$ preserves the
Lebesgue measure and the collision map $\tT$ preserves the measure $\mu$ defined in 
\eqref{eq:sec5mudef} (see e.g. the Remark on page 201 of \cite{Ch01}).

\begin{theorem}
Under assumption
\eqref{VanishPot} both the collision map $\tT$ and the continuous time system $\tG^t$
enjoy global global mixing with respect to
$\bbG_{AO}.$    
\end{theorem}

\begin{proof}
We claim that both $\tT$ and $\tG^t$ are 
very well approximated by the Lorentz gas
and so by
Theorems \ref{ThInfAppGGM} and \ref{ThInfAppGGMFlow}
the result will follow.
To prove the above claim, it is sufficient to check condition (i)
of Definition \ref{DefWApp} (and its continuous time counterpart). In continuous time,
we can choose $A_{z,\eps} = \emptyset$ as the flow $\tG^t$
is continuous and for $R$ large,
is uniformly close to the unperturbed billiard flow $G^t$ up to time $1$ by condition
\eqref{VanishPot}. To check condition (i) for the map, choose $A_{z,\eps}$ as the 
$\delta$ neighborhood of the primary singularity set of the unperturbed billiard map $T$.
By choosing $\delta$ sufficiently small, we clearly have $\mu(A_{z,\eps}) < \eps$ and now choosing
$R$ large (and consequently the field small), we have \eqref{eq:vwapproxcond}.
\end{proof}

Similarly to \S \ref{SSSLocPert}}, the assumption \eqref{VanishPot} is insufficient to ensure hyperbolicity close to the origin.
In particular the system could have elliptic islands in the bounded part of the space (cf. \cite{RKT})
and so it may fail to be local global mixing. 
On the other hand,  
{our next result gives local global mixing under the extra assumption that the 
field is small everywhere.

\begin{theorem}
\label{thm:fieldlg}
Assume besides \eqref{VanishPot} that $||U||_{C^3}$
is sufficently small (e.g. in the Coulomb potential case the charge $\be$ is small). Then
both the collision map $\tT$ and the continuous time system $\tG^t$
enjoy local global mixing with respect to
$\bbG_U.$    
\end{theorem}
}

\begin{proof}
{By Theorem \ref{ThLGM-App}, 
it suffices to check conditions (M1)-(M6).

We begin with the discrete time system. 
Much of the theory discussed in Section \ref{BilPrel}
has been extended to the Sinai billiards on compact phase space
with external fields in \cite{Ch01, C08}. Several of these results can be used in
our non-compact setup, too, since the proofs do not rely on the compactness of the phase
space.
For example, standard pairs are defined in \cite{C08}.
In fact, standard pairs for $\tT$ are exactly the same as standard pairs for $T$ (of course,
unstable manifolds are different but the unstable cone can be chosen the same).
Using the notation of Section~\ref{BilPrel}, we say that a standard family is compactly
supported if there is a finite set $A \subset \integers^2$ so that for all standard pairs $\ell$
in the family, $[\ell] \in A$.}

Let $\fM$ to be the set of all compactly supported {proper} standard families. 
Specifically, we require that $\fm\in\fM$ satisfies
\begin{equation}
\label{GrowthC-K}
\fm(x: r(x)<\eps)\leq K \eps,
\end{equation}
where $K$ is a sufficiently large constant { only depending on the system}. Then
(M1) is checked in \cite{C08}. To check (M2), let $\phi$  be a Lipschitz function supported on a single scatterer 
 $\Omega$. (Note that it suffices
to check the local global mixing for Lipschitz functions $\phi$ as the set of Lipschitz functions is
dense in $\mathbb L$. The condition that $\phi$ is supported on a single scatter is also not restrictive
since a function supported on a finite set of scatterers is a finite linear combination of functions
supported on a single scatterer.) 
We first observe that for each $\delta$ there exists $K(\delta)$ such that
if $\phi$ has the following properties:
\begin{equation}
\label{ProductProper}
  \delta\leq \phi\leq \delta^{-1}, \quad \mu(\phi)=1, \quad \text{Lip}(\phi)\leq 2, 
\end{equation}
then $\phi\mu\in \fM$ where $\fM$ is defined by \eqref{GrowthC-K} with $K=K(\delta),$
see e.g. {\cite[Proposition 5.6]{Ch01}}. Pick a large $R{\gg \delta^{-1}}$
We have the following decomposition:
$\phi={R} \one_\Omega-({R}-\phi) \one_\Omega.$ Thus $\phi=c_1\phi_1-c_2\phi_2$ where
$c_1$ and $c_2$ are constants and
\begin{equation}
\label{DifDecomp}
  \phi_1=\frac{\one_\Omega}{\mu(\Omega)}, \quad \phi_2=\frac{\one_\Omega-
{\frac{\phi}{R  }}}
      {\mu(\Omega)-{\frac{1}{R}}}.
\end{equation}      
Note that as $R\to\infty$, $\phi_2\to 1_\Omega / \mu(\Omega)$
in the space of Lipschitz functions, so if $R$ is sufficiently large
then $\phi_1, \phi_2$ satisfy \eqref{ProductProper} 
with constant $\delta$ depending only on the minimal perimeter of
the scatterers in our configuration. By the foregoing discussion,
$\phi_1 \mu,\phi_2 \mu\in \fM.$

{To prove (M3), 
we use the transversality of the unstable curves to singularities of the system
(see \cite[Section 4.5]{CD-BBM} for a similar argument). Specifically, given $\eps$ and $n$, we choose some
$\delta \ll \eps$. Then for the given $\eps, n, \delta$, we choose $R$ so large so that for every $x$
with $|z(x)| > R$ and for any $s \in [0,n (\tau_{\max}+1)]$, $d(G^s(x), \tG^s(x)) < \delta$. Such 
an $R$ exists since for small field, the trajectories are uniformly close to the unperturbed ones
(here, $\tau_{\max}$ is the maximum free flight time
 of the unperturbed system and consequently the
maximum free flight time of the perturbed system
is bounded by $\tau_{\max} +1$.) Thus choosing $\delta$ small, we can ensure
that
the singularity curves
of $\tT^n$ are in the $\eps^2$ neighborhood of those of $T^n$. 
Furthermore, the singularity curves
of $\tT^n$ are transversal to the unstable cones by
\cite[Lemma 3.10]{Ch01}. 
Let $\fm \in \fM$, $\ell = (W,\rho)$ a standard pair in $\fm$ and $x \in W$. If
$|z(x)| > R$ and $d(T^nx, \tT^nx) \geq \eps$, then by the foregoing discussion, $x$
is necessarily $C \eps^2$ close to an endpoint of $W$ (here $C$ is a geometric constant 
coming from 
the transversality). By \eqref{GrowthC-K}, the $\fm$ measure of such points is bounded
by $KC \eps^2$. For $\eps$ small enough, $KC\eps^2 < \eps$ and so (M3) follows (clearly, it 
is sufficient to prove (M3) for $\eps$ small enough).}
 
Next, let $\fM_\eps$ be the set of standard families on $M$ such that 
all standard pairs in $\fm$ is longer than $\eps.$ The local limit theorem for standard families
follows from {the mixing LLT for $T$ \cite[Lemma 2.8]{DN16}}. Thus (M4) holds.

{
Next, in our system a stronger variant of (M5) holds, namely $n_0$ is uniform in $\fm \in \fM$. Indeed,}
for $\fm$ in $\fM$ let $\fm_j'$ is the measure corresponding to the standard pairs
from ${\tT^n} \fm$ which belong to $\{z=j\}$ and have length greater than $\eps$.
{The desired inequality of (M5) follows from the growth lemma 
(see \cite[Lemma 5.3]{Ch01} and the discussion on page 95 of \cite{C08}).}

Since checking (M6) requires more effort, we postpone it to Section~\ref{sec:m6}.

The continuous time case can be handled similarly. We refer the reader to 
\cite{DN16e, BNSzT}
for the Growth Lemma and related results in the continuous time setting.
\end{proof}

\subsubsection{Lorentz gas in external field and Gaussian thermostat.} 
Suppose that the system moves in the same domain as
the Lorentz gas but the motion between the collisions is not free but rather satisfies
$$ \ddot{q}=E(q)-\frac{\langle \dot{q}, E(q)\rangle}{||\dot{q}||^2}$$
where $E(q)$ is a periodic field and the second term models energy dissipation.
This system is a $\integers^2$-cover of a Sinai billiard in external field which we will denote by $f$.
{There are two important differences between this model and the one 
studied in \S \ref{ex:vanpotential}:
this one is easier in the sense that it is periodic but more difficult in the sense that the 
Lebesgue measure is no longer invariant.
However, \cite{Ch01} implies that} $f$ has unique SRB measure $ \mu_E$
if $||E||_{C^1}$ is sufficiently small.
Furthermore, a Young tower can be constructed 
by the results of \cite{Ch01,C08} (see also \cite{CWZ}). Thus the (shifted) MLLT holds for 
$ (f, \mu_E)$
{by \cite[Lemma 4.3]{DN17}}
The shifted MLLT for continuous time system also follows from \cite[Theorem 4.1]{DN17}.
Accordingly by Theorem \ref{SLLTIMMIx}, we have local global and global global mixing 
with respect to $(\bbL, \bbG_U).$ 
We note that for typical $E$ (including the constant field) 
the drift in the CLT is not equal to zero (\cite{CELS93}).
We also note that in the presence of the drift, the system
is dissipative in the sense of ergodic theory, that is, almost every particle tends to infinity.
This gives a physical example of a system which enjoys both local global and  global global
mixing but is not ergodic.

\subsection{Galton board.}
This model is similar to Example \ref{ex:vanpotential}, however, we do not assume that
the potential is vanishing at infinity. Namely we consider a particle moving in a half plane
$q_1>0$
with a periodic configuration of convex scatterers removed
(we confine the particle to the half plane  by 
adding the vertical axis $q_1=0$ to the boundary of our domain).
%putting an infinite long extra scatterer to the vertical axis $q_1 =0$)}. 
The motion between collisions 
is subject to
a constant force field
which corresponds to a linear potential $U=-\mathbf{g} q_1.$ This system preserves the energy
$$H=v^2/2-\mathbf{g} q_1.$$
It is convenient to use the following coordinates: $q\in\reals^2$ is the position of the particle and
$\theta$ is the polar angle of the velocity vector $\tan \theta= \dot{q}_1/\dot{q}_2.$
Then the speed could be recovered using the equation $|v|=\sqrt{2(H+\bg q_1)}.$
{ In Lemma \ref{LmGB-LG} below we will see that the evolution of $q$ and $\theta$ coordinates 
is well approximated by the Lorentz gas. Therefore the appropriate space of observables
are functions which are uniformly continuous in $(q, \theta)$ coordinates and admit the averages 
on large cubes. Namely given $\fq=(\fq_1, \fq_2)\in  [0, \infty) \times \reals$ and $R>0$ such that $\fq_1>R$ 
consider the cube
$\DS \Omega_{\fq, R}=\{(q,\theta): |q-\fq|_\infty\leq R\}$ 
and let 

${\bbG_U}=\{\Phi: \Phi$ is uniformly continuous in $(q, \theta)$ variables and for each $\eps$ there is $R_0$
such that if $R\geq R_0$ then for each $\Omega_{\fq, R}$ as above
$$\left. \left|\frac{1}{\mu(\Omega_{\fq, R})} 
\int_{\Omega_{\fq, R}} \Phi(q, \theta) d\mu-\brPhi\right|\leq \eps\right\} .$$}
\smallskip

The main result of this section is

\begin{theorem}
\label{ThGlobMixGalton}
  There exists $H_0$ such that if $H\geq H_0$, then both the collision map $\tT$ and the continuous flow
  $\tG^t$ enjoy global global mixing { with respect to 
$\bbG_{AO}$} and local global mixing 
  { with respect to $\bbG_U$.}
\end{theorem}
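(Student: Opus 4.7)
The plan is to reduce the theorem to the general results of Sections \ref{ScResults} and \ref{ScFlows} by showing that for $H\geq H_0$ large enough the Galton board is a uniformly small $C^2$ perturbation of the corresponding periodic Lorentz gas on the \emph{entire} phase space, not merely at infinity as in the preceding discussion. Indeed, rescaling time so that the particle moves with unit speed between collisions, the equation of motion becomes $d^2 q/ds^2 = \bg e_1/(2(H+\bg q_1))$, whose right-hand side is bounded by $\bg/(2H_0)$ uniformly in $q_1>0$. Chernov's theory from \cite{Ch01,C08} then supplies uniform hyperbolicity, the Growth Lemma, a Young tower, and (via \cite{SzV04,DN16,DN17}) the MLLT for both $\tT$ and $\tG^t$, together with the entire standard pair machinery recalled in \S \ref{BilPrel}.

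Global global mixing follows by applying Theorem \ref{ThInfAppGGM}(b), and its flow analogue Theorem \ref{ThInfAppGGMFlow}(b). The one-step discrepancy $d(\tT(y,z),T(y,z))$ is $O(\bg/|v|^2)=O(1/q_1)\to 0$ as $q_1\to\infty$, which gives well-approximation at infinity once a finite-measure set near $q_1=0$ is absorbed into the set $D$ in the definition. The invariant measure of $\tT$ has density proportional to $\sqrt{2(H+\bg q_1)}\,\cos\phi$ relative to the Lorentz measure $\mu$, so on a box $V$ with $\sup_V q_1\le(1+\delta)\inf_V q_1$ this density varies by a factor at most $1+C\delta$; choosing $\delta=\delta(\eps)$ small enough yields \eqref{GoodDensity}. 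Therefore Theorem \ref{ThInfAppGGM}(b) gives global global mixing of $\tT$ with respect to $\bbG_U$, and Theorem \ref{ThInfAppGGMFlow}(b) does the same for $\tG^t$.

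For local global mixing I would apply Theorem \ref{ThLGM-App} by verifying (M1)--(M6), closely following \S \ref{ex:vanpotential}. Take $\fM$ to be the proper standard families of $\tT$ (those with $\cZ_\cG\le \brZ$) and $\fM_\eps$ the subfamilies whose pairs are longer than $\eps$. Then (M1) follows from the Growth Lemma, which is available globally at high energy by Chernov's construction; (M2) follows from the decomposition \eqref{DifDecomp} applied to Lipschitz functions supported on a single scatterer; (M3) follows from transversality of standard curves to the singularity set; (M4) follows from the MLLT for standard families, deduced from the MLLT for $\nu$ as in \cite{DN16}; and (M5) follows from the $\tT$-invariance of $\fM$. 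For (M6) I would invoke the CLT for the high-energy Galton board: the typical displacement after $n$ collisions is of order $\sqrt{n}$, so no bounded set of cells carries positive limiting mass. The continuous time case is parallel, using the flow analogue of Theorem \ref{ThLGM-App}.

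The main obstacle is the MLLT itself, which underlies both the global global mixing (indirectly, via the MLLT for the limiting periodic system $T$) and the local global mixing (directly, through (M4)). Its proof requires constructing a Young tower for the Galton board in Chernov's framework \cite{Ch01,C08} and performing a spectral perturbation analysis of the transfer operator as in \cite{SzV04,DN16,DN17}, while carefully tracking that the perturbation from the Lorentz gas stays $O(1/H)$ uniformly in $q_1$. One must also verify non-degeneracy of the CLT covariance and aperiodicity of the $\integers^2$-valued cocycle $\tau$; these properties are inherited from the unperturbed Lorentz gas once $H$ is taken sufficiently large.
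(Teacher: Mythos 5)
Your discrete-time argument follows essentially the same route as the paper (reduction to (M1)--(M6) with $\fM$ the proper standard families, Chernov's small-field hyperbolicity, the LLT only for the periodic system $T$), but the step that actually carries the weight, (M6), rests on a result that does not exist. There is no CLT for the displacement of the Galton board: the particle accelerates, and the correct input is the convergence of the rescaled kinetic energy process $\cK^n(t)=n^{-1/2}K_{tn}$ to the solution of the singular SDE \eqref{K-SDE} --- a power of a squared Bessel process --- proved in \cite{CD09}. What makes (M6) work is that the limit law $\cZ=\cK(1)$ has no atom at $0$, and the hypothesis $H\geq H_0$ enters precisely in removing the cutoffs in that convergence (Theorem 3 of \cite{CD09}). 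Your heuristic ``typical displacement is of order $\sqrt n$, hence no bounded set of cells carries limiting mass'' is the right logic but needs this specific diffusive limit, not a CLT. Relatedly, your closing paragraph about constructing a Young tower and proving an MLLT for $\tT$ and $\tG^t$ themselves is off target: no MLLT of the form \eqref{EqMixLLT} holds for the Galton board (its displacement statistics are Bessel-type, not Gaussian), and none is needed, since (M4) only concerns the unperturbed periodic Lorentz gas.

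The continuous-time case is not ``parallel,'' and the direct appeal to Theorem \ref{ThInfAppGGMFlow}(b) fails. At height $q_1\sim Q$ the Galton particle moves with speed of order $\sqrt Q$, so over a unit time interval it travels a distance of order $\sqrt Q$ while the unit-speed Lorentz particle travels $O(1)$; thus $d(\tG^t x, G^t x)$ does not remain small for $t\in[0,1]$ as $q_1\to\infty$, and the flow is not (very) well approximated at infinity in the sense required by the definition. One must instead compare the two flows over the rescaled time $s/t^{1/3}$ on the region $\{q_1\asymp t^{2/3}\}$ where the bulk of the mass lives at time $t$, which forces modified versions of (M3) and (M5) (with measures supported on $\{\heps\leq q_1/t^{2/3}<1/\heps\}$) and a corresponding modification of the global global mixing argument. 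This time rescaling is the essential new ingredient for the flow and is entirely absent from your proposal.
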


{ In order to prove Theorem \ref{ThGlobMixGalton} we need to recall several
results from~\cite{CD09}. 

\begin{lemma}
\label{LmGB-LG}
The collision map $\tT$ for Galton board is well approximated for large kinetic energy 
 by the collision map $T$
of the Lorentz gas. More precisely, the following condition holds

$\overline{(M3)}$
For each $\eps>0$ and $n\in \naturals$ there exists $R>0$ such that if  $\fm$ is
a measure corresponding\footnote{in the sense of \eqref{StFamMes}} to a proper standard family, then
$$  \fm(x: q_1(x)\geq R\text{ and } d(T^n x, \tT^n x)\geq \eps)\leq \eps. $$
\end{lemma}

Note that the condition $\overline{(M3)}$ above is different from the condition (M3) imposed 
in Section \ref{ScResults}. Namely, we replace the requirement \\$q_1^2+q_2^2\geq R^2$ by a 
stronger requirement $q_1>R.$ Lemma \ref{LmGB-LG} is proven in \cite[Section 3]{CD09}, however we recall the argument since it plays
an important role in the analysis below.

\begin{proof}
Let $(q_n, \theta_n, K_n)$ denote the position, direction and kinetic energy of the Galton
particle after $n$ collisions. The motion until the next collision is obtained by solving the following ODE
$$ \frac{d^2 q}{d t^2}=\bg e_1, \quad q(0)=q_n, \quad 
\frac{dq}{dt} (0)=\sqrt{2K_n} (\cos \theta_n, \sin \theta_n). $$
Making the time change 
\begin{equation}
\label{RescaledTime}
s=\frac{t}{\sqrt{2K_n}}
\end{equation}
 (note that changing the time does not change the place of the next collision) we get
\begin{equation}
\label{GB-Rescaled}
\frac{d^2 q}{d t^2}=\frac{\bg}{2K_n}  e_1, \quad q(0)=q_n, \quad 
\frac{dq}{dt} (0)=(\cos \theta_n, \sin \theta_n). 
\end{equation}
Note that $K_n=H+\bg (q_n)_1$, where $H$ is the particle's energy.
Therefore by taking $R$ large enough we can make the RHS of
 the ODE in
\eqref{GB-Rescaled} 
as small as we wish if $(q_n)_1\geq R.$ 
Accordingly the solution to \eqref{GB-Rescaled} can be made as close as we wish to 
the solution of 
$$ \frac{d^2 q}{d t^2}=0, \quad q(0)=q_n, \quad 
\frac{dq}{dt} (0)=(\cos \theta_n, \sin \theta_n). $$
Since the last equation describes the flow of the Lorentz gas without external field
between two collisions,
the lemma follows.
\end{proof} 
 
Since the Lorentz gas is hyperbolic, we have that the Galton board dynamics is
also hyperbolic for large kinetic energies. The condition that the total energy is large ensures 
that the kinetic energy is large as well, so the hyperbolicity persists in all of the phase space.

\begin{proposition}
\label{PrBessel}
There are constants $\sigma$ and $\brsigma$ such that the following holds.
%\footnote{An explicit expression for $\brsigma$ is given in the first display
%on page 839 of \cite{CD09}.}.

Suppose that $(q(0), v(0))$ is distributed according to some standard family.

(a) Let $K_n$ denote the kinetic energy of the particle after $n$ collisions.
Then the random process $\cK^n(t)=\frac{1}{\sqrt{n}} K_{tn}$ converges in law,
as $n\to \infty$ to $\cK(t)$ which is 
%\begin{equation}
%\label{LimBes}
%\cK^n(t)\Rightarrow \cK(t), \text{ as } 
%\end{equation}
%where
%{$\cK(t) = \lim_{\eps \searrow 0} \cK_{\eps}(t)$  and $\cK_{\eps}(t)$}
%$\cK(t)$ is 
the solution to the following stochastic differential equation:
\begin{equation}
\label{K-SDE}
 d\cK=\frac{\brsigma^2}{4\cK} dt+\brsigma d\cW, \quad \cK(0)=0.
\end{equation} 
%and $\brsigma$ is a positive constant

(b) Let $K(t)$ denote the kinetic energy of the particle at time $t$. Then the random process
$\DS \hat\cK_T(t)=\frac{K(t T)}{T^{2/3}}$ converges in law, as $T\to \infty$ to $\hat\cK(t)$ 
which is the solution to the following stochastic differential equation:
\begin{equation}
\label{K-SDE-cont}
 d\hat\cK=\frac{\sigma^2}{2\sqrt{2\hat\cK}} dt+(2\hat\cK)^{1/4} \sigma d\cW, \quad \hat\cK(0)=0.
\end{equation}
\end{proposition} 
}
 Note that the equations \eqref{K-SDE} and \eqref{K-SDE-cont}
 are well posed despite the singular coefficients as discussed
in \cite{CD09}.
\begin{proof}
{ Part (b) is a restatement of Theorem 3 in \cite{CD09}. Namely \cite{CD09} 
uses the rescaled time $s=\frac{t}{T^{1/3}}$ (cf. \eqref{RescaledTime}).
In the rescaled time the part (b) states that
$\frac{K(s T^{4/3})}{T^{2/3}}\Rightarrow \hat\cK(t)$ as $T\to\infty.$ 
Denoting $\eps=T^{-2/3}$ we can rewrite the last statement as  
$\eps K(s \eps^{-2}) \Rightarrow \hat\cK(t)$ as $\eps\to 0$ 
which exactly the statement of Theorem 3 in \cite{CD09}.

Next we discuss the part (a).
In the case we start away from $0$ and the process $\cK^n$
is stopped when it reaches too high or too low values, \eqref{K-SDE} is proven in \cite[Theorem 4]{CD09}. 
The removal of those cutoffs can be done in the same way as in the continuous time case, see the proof of
Theorem 3 in \cite{CD09} (note that this theorem assumes that the total energy $H$ is large
enough). 
}
\end{proof}

{ We mention that the explicit formulas for $\sigma$ and $\brsigma$ are the following
(cf. \cite[page 839]{CD09}). 
Let $\tsigma$ be the diffusion coefficient of $q_1$ for the Lorentz gas with respect to the
discrete time. That is 
$$\tsigma^2=\lim_{n\to\infty} \nu\left(\frac{(q_{0,n})_1^2}{n}\right)$$
where $q_{0,n}$ is the position of the particle after the $n$-th collision 
in the Lorentz gas and $\nu$ is any 
smooth compactly supported measure. Then
$ \brsigma=\tsigma \bg$ and $\sigma=\brsigma/\sqrt{\brtau}$ where $\brtau$
is the free path length. However, we do not need the explicit values of $\sigma$ and $\brsigma$
in the proof of Theorem \ref{ThGlobMixGalton}. }

\begin{proof}[Proof of Theorem \ref{ThGlobMixGalton}]
{ Given the background presented above, the proof  proceeds similarly to 
the arguments of Section \ref{ScProofs} with minor modifications described below.}
\smallskip

{
{\bf Global global mixing for $\tT$.}
Given Lemma \ref{LmGB-LG}, 
the proof of the global global mixing is the same as the proof of Theorem \ref{ThInfAppGGM}
(a) with $d_1 = d_2 = 1$, except instead of the fact that $z(\tT^k)$ 
is large for all $k\leq n$ for most initial conditions in our cube, 
we use that $q_1(T^k x)$ (and, hence, $K(T^k x)$)
is large for all $k\leq n$ for most initial conditions in our cube.}
\vskip2mm

{\bf Local global mixing for $\tT$.}
We check (slightly modified) conditions (M1)--(M6). We choose $\fM$ and $\fM_\eps$
in the same way as in Example \ref{ex:vanpotential}. 
(M2) and (M4) are checked in the same way as in
that example. (M1) and (M5) follow from \cite[Lemma 2.1]{CD09}.
{ We already checked
$\overline{(M3)}$, which is an analogue of (M3), in Lemma \ref{LmGB-LG}.
Since $\overline{(M3)}$ is weaker than (M3), we need to replace (M6) by a stronger condition,
namely\vskip1mm

$\overline{(M6)}$
For each $\fm\in \fM$ and for each $R>0$, $\fm(|K(\tT^n x)|\leq R)\to 0$ as
$n\to\infty$ where $K$ denotes the kinetic energy.} \vskip1mm

Similarly to Theorem \ref{ThLGM-App}, local global mixing is implied by
(M1), (M2) $\overline{(M3)}$, (M4), (M5) $\overline{(M6)}$. It remains to verify $\overline{(M6)}$. To this end,
we note that by Proposition \ref{PrBessel}(a), 
$\frac{K_n}{\sqrt{n}}$ converges to $\cK(1)$, where $\cK(\cdot)$ is the solution to \eqref{K-SDE}.
Note that $\cK(t)$ is a power of
the square Bessel process, so its density can be computed explicitly (cf. \cite{D08}). 
In particular,
$\Prob(\cZ=0)=0$ proving $\overline{(M6)}.$
\vskip2mm

{\bf Local global mixing for $\tG^t.$}
In this case, we also
need to modify (M1)--(M6).
Note that if $q(t)\sim Q\gg 1$, then $v(t)\sim \sqrt{Q}$ so the particle will travel distance of order
$\sqrt{Q}$ during a unit time interval. This distance is too large for Lorentz particle to serve as a good approximation to the Galton particle. The good news is that a much shorter time is sufficient to observe the 
LLT on Galton board. 

{ Note that Lemma \ref{LmGB-LG}  does not tell us that $\tG^t$ is well approximated by $G^t.$
Instead $G^t$ approximates the rescaled flow. Namely, let $\hG^s$ be obtained from
$\tG^t$ by the time change $\DS \frac{ds}{dt}=(2 K_{n(t)})^{-1/2}$,
where $n(t)$ is the number of collisions before time $t$. Then the proof of 
Lemma~\ref{LmGB-LG} shows that $\hG^s$ is well approximated by $G^s$ for large values
of the kinetic energy. }

Accordingly we replace $\fM_\eps$ by the family $\fM_{\eps, t}$ consisting of the measures $\fm$
such that 

{(i) all standard pairs $\fm$ are longer than $\eps$ and;} 

(ii) $\fm$ is supported on the set 
$\{  x:\heps \leq  K(x)/t^{2/3}<1/\heps\} $ where $\heps$ is chosen so that
$$ \Prob\left(2\heps<\frac{ \widehat\cK(u)}{t^{2/3}}<\frac{1}{2\heps} 
\text{ for all } u\in [t/2, t]\right)\geq 1-\frac{\eps}{100},$$
where $\widehat\cK$ is the solution of \eqref{K-SDE-cont}.
%(such $\heps$ exists due to {\dimacolor Proposition \ref{PrBessel}(b)})

Next we replace (M3) by 
%{\footnotesize
%$$\widetilde{\text(M3)}\quad \forall \fm\in \fM\; \forall s \;\exists T: \forall t\geq T:
%\fm\left(x: \heps<\frac{q_1(x)}{t^{2/3}}<\frac{1}{\heps} \text{ but }
%d(\tG^{s/t^{1/3}} x, G^s (x))>\eps\right)\leq \eps. $$}

$\widetilde{\text(M3)}$: For all $\fm\in \fM\; \forall  \tau \;\exists T: \forall t\geq T$
$$
\fm\left(x: \heps<\frac{K(x)}{t^{2/3}}<\frac{1}{\heps} \text{ but }
\sup_{s\in [0, \tau]} d(\tG^{s/{ \sqrt{2K(x)}}} x, G^s (x))>\eps\right)\leq \eps.
$$

and replace by (M5) by 

\smallskip
\noindent
$\widetilde{\text(M5)}$ For each $\fm\in\fM$ for each $\eps>0$ and 
$s { \geq} 0$ there exists
$T$
such that for $t\geq T$ we can decompose
$$ \tG^{{ t-s/t^{1/3}}} _* \fm={ \left[\sum_j c_j \fm_j \right] +c_{err} \fm_{err}}, $$ 
{ where for all $j$, $\fm_j\in \fM_{\eps,t}$ and there is some $\kappa_j$ such that  $\fm_j$ is supported on 
$\{|K(x)-\kappa_j|\leq 1\}$. Furthermore, ${ c_{err}}  \leq \eps.$}

The verification of (M1), (M2), $\widetilde{\text(M3)},$ (M4), $\widetilde{\text(M5)},$ (M6) is similar to
the verification of { (M1), (M2), $\overline{\text(M3)},$ (M4), 
(M5), $\overline{\text(M6)}$} for the collision map $\tT.$

{ Next, we explain what adjustments are needed in the proof of Theorem
\ref{ThLGM-App} (and its continuous time counterpart) to verify that 
$\widetilde{\text(M3)},$ $\widetilde{\text(M5)},$
can be used in lieu of (M3) and (M5) to infer local global mixing.

First, given $\Phi \in \bbG_U$, $\fm\in \fM$, $\delta>0,$ and $s>0$, we choose
$\eps >0$ small and apply $\widetilde{(M5)}$ 
to conclude that for all sufficiently large $t$
{
$$ \left| \fm\left(\Phi \circ \tG^t\right)-\sum_j  c_j \fm_j\left(\Phi \circ \tG^{s/t^{1/3}}\right)\right|\leq 
\delta .$$}
Further increasing $t$ if necessary, 
the bounded oscillation of $K(.)$ on $\fm_j \in \fM_{\eps,t}$ becomes negligible compared to $t$: specifically, for sufficiently large $t$, we have

$$ \left|\fm_j\left(\Phi \circ \tG^{s/t^{1/3}}\right)-
\fm_j\left(\Phi \circ \tG^{s\rho_j / \sqrt{2K(x)}} \right)\right|\leq \delta,$$
for all $j$, where $\DS \rho_j=\frac{\sqrt{2\kappa_j}}{t^{1/3}}$.
Next, by the definition of $\fM_{\eps, t}$, we have 
$\DS 2\sqrt{\heps} \leq \rho_j \leq 2/\sqrt{\heps}$. 
Thus we can use $\widetilde{(M3)}$ with  $\tau$ replaced by $2\tau/\sqrt{\heps}$ to conclude
that 
$$
 \left|
\fm_j\left(\Phi \circ \tG^{s\rho_j / \sqrt{2K(x)}} \right)
- 
\fm_j\left(\Phi \circ G^{s\rho_j} \right)
\right|\leq \delta
$$
Combining the last three displays, we get
\begin{equation}
\label{eq:lgGbflowpf}
\left| \fm\left(\Phi \circ \tG^t\right)-
\sum_j  c_j \fm_j\left(\Phi \circ G^{s\rho_j} \right)\right|\leq 
3\delta 
\end{equation}
As in the proof of Theorem \ref{ThLGM-App}, it is sufficient to verify that
$$\lim_{t \to \infty}\fm\left(\Phi \circ \tG^t\right) = \bar \Phi.$$ Thus by \eqref{eq:lgGbflowpf},
it suffices to verify that 
$$
|\fm_j\left(\Phi \circ G^{s\rho_j} \right) - \bar \Phi| < \delta
$$
for all $j$. This can be done by choosing $s = s(\delta)$ large and using the MLLT 
for $G$. This completes the proof of the local global mixing of $\tG$.}
\vskip2mm

{
{\bf Global global mixing for $\tG^t.$}
The proof is a simplified version of the proof of Theorem \ref{LLTIMMIx}(b)
because we have now $\Phi_1, \Phi_2 \in \bG_U$. Namely, we decompose
$$ \int_{\Omega_{\fq, R}} \Phi_1(x) \Phi_2(\tG^t x) d\mu(x)=
\sum_z \int_{\Omega_{\fq, R}} \Phi_1(x) 1_{z(x)=z} \Phi_2(\tG^t x) d\mu(x)
$$
where $z(x)$ is the label of the fundamental domain containing $x$.
We claim that if $R$ is sufficiently large, then there is a set $\brOmega\subset \Omega_{\fq, R}$ which is a union of fundamental
domains, such that 
$\DS \frac{\mu\left(\Omega_{\fq, R}\setminus \brOmega\right)}{\mu(\Omega_{\fq,R})}
=O(R^{-1/5}) $ and for $x\in \brOmega$, 
$\DS \min_{u\leq t} q_1(\tG^u x)\geq R^{1/10}.$ Indeed suppose that $R>t^{50}$ and
let $\brOmega$ be the union of fundamental domains such that $q_1(x)>R^{1/5}$ everywhere
on the domain. Using the fact that the speed of the particle is $O(R^{1/10})$ to the left
in the strip $0\leq q_1\leq R^{1/5}$, we conclude that for $x\in \brOmega$
$$ \min_{u\leq t} q_1(\tG^u x)\geq R^{1/5}-C R^{1/10} t\geq R^{0.2}-C R^{0.12}\geq 
R^{1/10} $$
for $R$ large, which proves the claim.

Arguing the same way as in the proof of local global mixing, we conclude that for 
the fundamental domains in $\brOmega$
$$ \int \Phi_1(x) 1_{z(x)=z} \Phi_2(\tG^t x) d\mu(x)=
\left[\int \Phi_1(x) 1_{z(x)=z}  d\mu(x) \right]\brPhi_2+o_{t\to\infty, R\to\infty}(1). $$
Since $\Phi_1\in \bbG_U$, we obtain
$$ \frac{1}{\mu(\Omega_{\fq, R})} \sum_z \int_{\Omega_{\fq, R}}  \Phi_1(x) 1_{z(x)=z}  d\mu(x)$$
$$=
\frac{1}{\mu(\Omega_{\fq, R})} \int_{\Omega_{\fq, R}} \Phi_1(x)  d\mu(x)=
\brPhi_1+o_{R\to\infty}(1) $$
completing the proof of global-global mixing.}
\end{proof}

\subsection{Fermi-Ulam pingpong.}
\label{SSFUPp}
Consider the following one-dimensional system: a unit point mass moves horizontally between two infinite mass walls. Between collisions, the motion is free so that the 
kinetic energy is conserved, collisions between the particle and the
walls are elastic. The left wall moves periodically, while the right one is fixed. The distance between the two walls
at time $t$ is denoted by $\ell(t)$. We assume that $\ell$ is strictly positive, 
continuous and periodic of period $1$. Moreover we suppose that the restriction of $\ell$ to the open interval $(0,1)$
is $C^5$ but $\dot \ell(1-) \neq \dot \ell (1+)$, where
$\dot \ell(1+) = \lim_{t\downarrow 0} \dot\ell(t)$ and $\dot \ell(1-) = \lim_{t\uparrow 0} \dot\ell(t).$
Thus $\ell$ is piecewise smooth with singularities only at integers.  
Let $\tT$ be the map defined as follows. Let the particle move until the the next integer moment of time 
and then stop it after the first collision with the moving wall. Note that $\tT$ is conjugated to 
$G$-the time $1$
map of the system. Namely for $\tT$ it is natural to use the following coordinates:
the time of collision (taken modulo $\integers$) and the post collisional velocity at the
moment of collision. For $G$ it is natural to use velocity and height. To pass from the first coordinate
set to the second one, we replace the post collisional velocity with the precollisional one and then let the
particle move backward until the first time it becomes an integer. 

It is shown in \cite{dSD12} that $\tT$ is well approximated at infinity by the following map of
the cylinder $\Tor\times \reals :$
\begin{equation}
\label{PPLim}
 T(\tau, I)=( \tau-I, I+\Delta(\tau-I))
\end{equation}
where  
$$ \Delta=\ell(0) \sigma \int_0^1 \ell^{-2}(s)\; ds, \quad
\sigma=
\dot \ell(1+) - \dot \ell (1-).
%\lim_{t\downarrow 0} \dot\ell(t)-\lim_{t\uparrow 0} \dot\ell(t). 
$$
$T$ covers a map $f$ of $\Tor^2$ which is defined by formula \eqref{PPLim} with 
$I$ taken mod 1.
{Specifically, property (ii) of Definition
\ref{DefWApp2} holds with $d_1 = 1$, $d_2 = 0$.}
If $\Delta\not\in (0,4)$ then the map $f$ is piecewise hyperbolic and according to 
\cite[Section 7]{Y98}, it admits a Young tower and hence, satisfies the MLLT
(see e.g. \cite{G05}). Therefore in this case $\tT$ and, hence, $G$ are global global mixing with respect
to $\bbG_U.$
 
We note that while the dynamics for large energies is described by a single parameter $\Delta$, the dynamics
for low energies is far from universal. In particular, it is easy to construct an example where
$T$ has elliptic fixed points and so it is not ergodic. Thus we get another natural example where
the map is global global mixing but is not ergodic. 

On the other hand it is shown in \cite{dSD18} that if $\ell$ is piecewise convex, then $\tT$ is
ergodic for most values of the parameter $\Delta$ (with at most a countable set of exceptions).
One could expect that in that case $\tT$ is local global mixing, but this question requires a further 
investigation. 

\subsection{Bouncing ball in a gravity field.}
\label{SSGravity}
In this model a particle moves on $\mathbb R_+$ in a linear potential $U(x)=gx$ and collides 
elastically with an infinitely heavy wall whose position at time $t$ equals to $h(t)$. We assume that $h$
is 1-periodic and piecewise $C^2$ but not $C^2$. Let $\tT$ be the collision map in this model. It is shown in \cite{Z18}
that $\tT$ is well approximated at infinity by the map $T$ of the cylinder $\Tor\times \reals$ given by
\begin{equation}
\label{PPGLim}
T(t,v)=(t+2v/g, v+2 \dot{h}(t+2v/g) ).
\end{equation}
 $T$ is a $\integers$ cover of the map $f$ of $\Tor^2$ defined by \eqref{PPGLim} with
$t$ taken mod 1 and $v$ taken mod $\frac{g}{2}.$
{(Again, property (ii) of Definition
\ref{DefWApp2} holds with $d_1 = 1$, $d_2 = 0$.)}
Moreover, it is proven in \cite{Z18} that if either
\begin{equation}
\label{JingCond}
 \ddot{h}>0 \text{ or } |\ddot{h}+a|\leq \eps 
\end{equation} 
where $a>g$ and $\eps=\eps(a)$ is a small constant, then $f$ satisfies the conditions of \cite{CWZ}.
Consequently it admits a Young tower with exponential tail and hence satisfies the MLLT.
It follows
{from Theorem \ref{ThInfAppGGM}}
that if \eqref{JingCond} is satisfied, then $\tT$ enjoys global global mixing with respect to 
$\bbG_U.$

As in the previous example, the dynamics for small energies is not universal
and the question about local global mixing may depend on the law energy dynamics of the system.
Finally we note that the continuous time system is not global global mixing since 
on most of the phase space the motion is integrable. Namely let $\Phi$ be a non negative continuous
function which depends only on velocity, is 1-periodic and is supported on 
$\{v: d(v, \integers)\leq 0.01\}.$ Then $\brPhi=\int_0^1 \Phi(v) dv>0.$ On the other hand for each
$T,$ on most of the set $\{ v \leq V\}$ 
with $V \gg T$, velocity remains large on the time interval $[0, T].$ 
For such orbits $v(t)=v(0)-gt$ for $t\in [0, T]$ and so if 
$d(gT, \integers)>0.04$ then $\Phi\cdot (\Phi\circ \tG^T)=0.$ Accordingly the large volume limit for such $T$'s is
$$\overline{\Phi\cdot (\Phi\circ \tG^T)}=0$$
precluding global global mixing.  As in the discrete time case the question of local global mixing is more
subtle and deserves a further investigation.

\section{{Condition (M6) for Lorentz gas with external fields}}
\label{sec:m6}

Here we {complete the proof of Theorem
\ref{thm:fieldlg} by checking} the condition (M6) for Lorentz gas with vanishing potential.
We hope that similar arguments will apply to other hyperbolic systems with singularities,
  including the examples of \S \ref{SSFUPp} and \S \ref{SSGravity} once their dynamics in
  the low energy regime is better understood. 

\subsection{Recurrence-transience dichotomy.}
For sets $\cA, \cB$ we shall write $\cA\equiv \cB$ if their symmetric difference satisfies
$\mu(\cA\triangle \cB)=0.$

\noindent
In this section we prove an auxiliary result of independent interest.
Let
$$\cR^\pm=\{x: |z(\tT^n x)|\not\to\infty \text{ as } n\to\pm\infty\}. $$
%\cR^-=\{|z(\tT^n x)|\not\to\infty \text{ as } n\to-\infty\}$$
%$$\cR=\{x: |z(\tT^n x)|\not\to\infty \text{ as } n\to+\infty
%\text{ and }
%|z(\tT^n x)|\not\to\infty \text{ as } n\to-\infty\}.$$
Then, (see e.g. \cite[\S 1.1]{A97}), $\cR^-\equiv \cR^+.$ Let
$\cR=\cR^-\cap \cR^+$ be the set of recurrent orbits. Then $\cR\equiv \cR^+\equiv \cR^-.$

\begin{lemma}
Either ${\mu}(\cR)=0$ or ${\mu}(\cR^c)=0.$ In the second case, $\tT$ is ergodic. 
\end{lemma}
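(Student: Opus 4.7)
The plan is to execute a Hopf-type dichotomy argument. Both $\cR^\pm$ (and hence $\cR\equiv\cR^\pm$ and $\cR^c$) are $\tT$-invariant modulo $\tmu$-null sets, so it suffices to show that every bounded $\tT$-invariant function is $\tmu$-a.e. constant. Applied to $\one_\cR$ this yields the dichotomy; applied to general invariant functions it yields ergodicity on the conservative part whenever $\tmu(\cR)>0$.

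The key observation is that $\one_\cR$ is a.s. constant along stable and unstable manifolds of $\tT$. Indeed, if $y\in W^s(x)$, then $d_X(\tT^n x,\tT^n y)\to 0$, and since the $\integers^d$-factor of $X$ is discrete, necessarily $z(\tT^n x)=z(\tT^n y)$ for all $n$ large enough; thus $x\in\cR^+\iff y\in\cR^+$. The symmetric statement holds on $W^u$ for $\cR^-$. Using $\cR\equiv \cR^+\equiv\cR^-$ we conclude $\one_\cR$ is a.e. constant on both foliations. For a general bounded $\tT$-invariant $\phi$, the same forward (resp. backward) synchronization along $W^s$ (resp. $W^u$), combined with Hopf's ratio ergodic theorem applied on the conservative part with a positive $L^1$ weight, shows $\phi$ is a.e. constant on $W^s(x)$ and $W^u(x)$.

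One then concludes via the absolute continuity of the stable and unstable foliations (summarized in \eqref{HolJac}--\eqref{AC4} for the Lorentz gas, and extended to $\tT$ under the smallness assumption on $U$ by \cite{C08, CM06}) combined with a Hopf-chain connectivity argument: any measurable function a.e. constant on local stable and unstable manifolds is a.e. constant on each cell $M\times\{z\}$, and since $\tT$ (respectively $\tT^{-1}$) maps stable (respectively unstable) manifolds to stable (respectively unstable) manifolds while shifting the $z$-coordinate, one can connect any two points of $X$ by a finite alternating chain of manifold moves and applications of $\tT^{\pm 1}$. The main technical obstacle I anticipate is the cell-to-cell connectivity: because the perturbed $\tT$ is not translation-invariant, one cannot transplant the standard Hopf argument from the periodic Lorentz gas directly, but must invoke hyperbolicity of $\tT$ together with the Growth Lemma to guarantee sufficiently long stable/unstable manifolds in each cell, providing the bridges needed to execute the Hopf chain across the $\integers^d$-lattice.
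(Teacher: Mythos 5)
Your core strategy for the dichotomy --- exact constancy of $\one_{\cR^{\pm}}$ along stable/unstable manifolds via synchronization of the discrete $z$-coordinate, absolute continuity of the laminations, and a Hopf chain --- is the same as the paper's. Two points, however, need repair. First, the opening reduction is false as stated: ``every bounded $\tT$-invariant function is $\tmu$-a.e.\ constant'' is precisely ergodicity of $\tT$, which the lemma does \emph{not} assert in the dissipative case and which genuinely fails for systems of this type (the paper itself exhibits dissipative, non-ergodic examples such as the thermostated Lorentz gas). Your argument survives because for the dichotomy you only need the particular invariant set $\cR$, whose manifold-constancy is exact, and for ergodicity you confine the ratio ergodic theorem to the conservative case; but the reduction should be stated that way. (For the ergodicity assertion the paper simply cites \cite{L03}; your direct Hopf/ratio-ergodic sketch is a reasonable alternative route.)

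Second, and more substantively, the inference ``a.e.\ constant on stable and unstable manifolds, plus a Hopf chain, gives a.e.\ constant'' conceals the difficulty that constitutes essentially the whole of the paper's proof. A Hopf chain passes through \emph{specific} points $y_{j+1}\in W^s(y_j)\cup W^u(y_j)$, and ``a.e.\ constant on $W^s(y_j)$'' gives no information at the specific point $y_{j+1}$, which may lie in the exceptional null subset of that manifold. The paper's device is to saturate iteratively, setting $\cR_n^+=\{x\in\cR_{n-1}:\mes(W^s(x)\cap\cR_{n-1}^c)=0\}$ and similarly $\cR_n^-$, proving $\cR_n\equiv\cR$ by absolute continuity (i.e.\ \eqref{AC2}), passing to $\cR_\infty=\bigcap_n\cR_n$, and constructing the analogous $\cE_\infty$ for the escaping set. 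Then $\cG=\cR_\infty\cup\cE_\infty$ has full measure, and membership in $\cR_\infty$ \emph{does} propagate along a chain inside $\cG$: the point $y_{j+1}\in\cG$ shares its homogeneous stable (or unstable) manifold with $y_j$, and almost every point of that shared manifold cannot lie in both $\cR$ and $\cE$. Without this (or an equivalent) regularization the chain argument does not close. Your remarks about bridging cells with $\tT^{\pm1}$ and about securing long manifolds via the Growth Lemma are sensible but secondary to this point.
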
  

\begin{proof}
Let $\cR_0=\cR,$ $\cR_0^\pm=\cR^\pm,$
and for $n>0$ define inductively $\cR_n=\cR_n^+\cap \cR_n^-$ where
\begin{gather*}\cR_n^+=\{x\in\cR_{n-1}: \mes(W^s(x)\cap \cR_{n-1}^c)=0\},\\
\cR_n^-=\{x\in \cR_{n-1}: \mes(W^u(x)\cap \cR_{n-1}^c)=0\}.
\end{gather*}
We shall show inductively that 
\begin{equation}
\label{AllREquiv}
\cR_n\equiv \cR_n^+\equiv \cR_n^-=\cR_{n-1}.
\end{equation}
For $n=0$ this follows from the foregoing discussion. Assuming that
\eqref{AllREquiv} holds for $n-1$ we obtain, using the  absolute continuity of the stable  lamination (namely, \eqref{AC2})
and the relation $\cR_{n-1}\equiv \cR_{n-1}^+$, that
$$ \cR_n^+\equiv \{x\in \cR_{n-1}^+: \mes(W^s(x)\cap (\cR_{n-1}^+)^c)=0\}\equiv \cR_{n-1}^+ $$
where the last step uses that, by construction, 
$$\mes(W^s(x)\cap (\cR_{n-1}^+)^c)=0$$ for $x\in \cR_{n-1}^+.$
Thus $\cR_n^+\equiv \cR_{n-1}.$
Likewise $\cR_n^-\equiv \cR_{n-1},$ proving \eqref{AllREquiv}.
\eqref{AllREquiv} shows that
\begin{equation}
\label{RInfty-R}
\cR_\infty:=\bigcap_n \cR_n\equiv \cR.
\end{equation}

Let $\cE_0=\cE=\cE^+\cap \cE^-$ where
$$ \cE^\pm=\{x: |z(\tT^n x)|\to\infty \text{ as } n\to\pm\infty\}.$$
and define $\cE_n$ and $\cE_\infty$
similarly to $\cR_n$ and $\cR_\infty$ respectively.
Similarly to \eqref{RInfty-R} we obtain that
$$ \cE_\infty\equiv \cE\equiv\cE^+\equiv \cE^-. $$
Denote $\cG=\cE_\infty\cup\cR_\infty.$ By the foregoing discussion 
$$\cG\equiv \cE\cup\cR\equiv \cE^+\cup \cR^+. $$
Since the last set equals to the whole phase space we conclude that
 $\mu(\cG^c)=0.$ 
 
Suppose for a moment that that $\cR_\infty\neq \emptyset.$
Pick $x'\in \cR_\infty$. {Then, by \cite[Lemma 3.6]{C08}} for
every $x''\in \cG$ there exists a Hopf chain, that is, a chain
$$x'=y_0, y_1,\dots, y_n=x'' \text{ such that } y_j\in \cG\text{ and }y_{j+1}\in
W^s(y_j) \cup W^u(y_{j}). $$
By construction since $y_0=x'\in\cR_\infty$ then $y_j\in \cR_\infty$ for all $j.$
Thus $x''\in \cR_\infty$ and hence $\mu(\cR^c)=0.$ 

On the other hand
if $\cR_\infty=\emptyset$ then $\mu(\cR)=0.$ This proves the first claim
of the lemma. The fact that recurrence implies ergodicity follows from
\cite{L03}.
\end{proof}

\begin{corollary}
\label{CrNonLocalize}
For any set $A$ of finite measure and for any $\eps, R>0$ there exists $n$
such that
\begin{equation}
\label{SomeTimeFar}
{\mu}(x\in A: \tT^n x\in B_R)< \eps,
\end{equation}
where $B_R=\{x: |z(x)|\leq R\}.$
\end{corollary}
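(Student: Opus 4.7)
The plan is to apply the recurrence-transience dichotomy from the preceding lemma and handle each alternative separately. Observe first that $B_R$ has finite $\tmu$-measure: the set $\{z\in\integers^d: |z|\leq R\}$ is finite, each fiber $M\times\{z\}$ has finite $\nu$-measure, and (in the variant with a core $D$) $D$ itself has finite measure. Hence both $A$ and $B_R$ carry finite mass, which is what we shall exploit.

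\textbf{Case 1: $\tmu(\cR)=0$ (transient case).} Here almost every point satisfies $|z(\tT^n x)|\to\infty,$ so for $\tmu$-a.e.\ $x$ we have $\one_{B_R}(\tT^n x)=0$ for all sufficiently large $n.$ In particular $\one_A(x)\,\one_{B_R}(\tT^n x)\to 0$ pointwise a.e., and since this is dominated by $\one_A\in L^1(\tmu),$ the dominated convergence theorem gives
\[
 \tmu(x\in A: \tT^n x\in B_R)=\int \one_A(x)\,\one_{B_R}(\tT^n x)\,d\tmu\;\longrightarrow\;0.
\]
Any sufficiently large $n$ works.

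\textbf{Case 2: $\tmu(\cR^c)=0$.} By the lemma $\tT$ is conservative and ergodic with respect to the infinite measure $\tmu.$ Since $\one_{B_R}\in L^1(\tmu),$ the Hopf ratio ergodic theorem (applied with an auxiliary $g\in L^1$, $g>0,$ and using $\tmu(\tX)=\infty$) implies that the Birkhoff averages vanish:
\[
 \frac{1}{N}\sum_{k=0}^{N-1}\one_{B_R}(\tT^k x)\longrightarrow 0\quad\text{for $\tmu$-a.e.\ }x.
\]
Multiplying by $\one_A$ and using that $\one_A$ is an $L^1$ dominating function, dominated convergence yields
\[
 \frac{1}{N}\sum_{k=0}^{N-1}\tmu\bigl(A\cap \tT^{-k}B_R\bigr)\longrightarrow 0.
\]
Consequently $\liminf_{n\to\infty}\tmu(A\cap\tT^{-n}B_R)=0,$ and some index $n$ satisfies \eqref{SomeTimeFar}.

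\textbf{What I expect to be the main obstacle.} Case 1 is essentially automatic. The only delicate point is Case 2: one must justify that $N^{-1}\sum_{k<N}\one_{B_R}\circ\tT^k\to 0$ a.e.\ for infinite-measure preserving ergodic $\tT.$ This is a classical consequence of Hopf's ratio ergodic theorem (compare $\one_{B_R}$ with a test function $g\in L^1$ with $\int g>0$; the ratio of partial sums tends to $\tmu(B_R)/\int g,$ so if the denominator averages had a positive limit, Hopf's identification would force $\tmu(\tX)<\infty$). Once this standard fact is invoked, the rest of the argument is a one-line application of dominated convergence.
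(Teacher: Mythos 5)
Your proof is correct and follows essentially the same route as the paper: split according to the recurrence--transience dichotomy, dispose of the dissipative case by pointwise convergence plus dominated convergence, and in the conservative ergodic case use the ratio ergodic theorem to show that the Birkhoff averages of $\one_{B_R}\in L^1(\tmu)$ vanish a.e., then apply dominated convergence to get that the Ces\`aro averages of $\tmu(A\cap\tT^{-n}B_R)$ tend to $0$. The only (immaterial) difference is that the paper compares occupation times of individual cells $z_1,z_2$ and sums over the finitely many cells in $B_R$, whereas you compare $\one_{B_R}$ directly against an auxiliary positive $L^1$ function.
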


\begin{proof}
  If ${\mu}(\cR)=0$ then $\tT$ is dissipative (\cite[\S 1.1]{A97}), that is, for a.e. $x$
  $$ \lim_{n\to+\infty} |z(\tT^n x)|=+\infty,$$ so
  \eqref{SomeTimeFar} is obvious.

  On the other hand if ${\mu}(\cR^c)=0$ then $\tT$ is ergodic, so the
  Ratio Ergodic Theorem tells us that for each
  $z_1, z_2$ and
  for almost every $x$
    $$ \lim_{N\to\infty}
  \frac{\Card(n\leq N: z(\tT^n x)=z_1)}{\Card(n\leq N: z(\tT^n x)=z_2)}=
  \frac{ {\mu}(x: z(x)=z_1)}{{\mu}(x: z(x)=z_2)}. $$
  Since the last expression is uniformly bounded away from 0 we have
  that for any $\brz$ and almost every $x$
  $$ \lim_{N\to\infty}
  \frac{\Card(n\leq N: z(\tT^n x)=\brz)}{N}=0. $$
  By the Dominated Convergence Theorem
  \begin{gather*}
  \frac{1}{N} \sum_{n=1}^N {\mu}(x\in A: z(\tT^n x)=\brz)\\
  =
  {\mu}\left(\frac{\Card(n\leq N: z(\tT^n x)=\brz)}{N} 1_{\{x \in A\}}\right)\to 0
  \text{ as $N \to \infty$.}
  \end{gather*}
Summing over $\brz$'s such that $|\brz|\leq R$ we get
$$ \frac{1}{N} \sum_{n=1}^N  \tilde \mu(x\in A: \tT^n x\in B_R)\to 0 .$$
Therefore the set of times $n$ when
\eqref{SomeTimeFar} is false has zero density.
\end{proof}  

The preliminaries discussed in Section \ref{BilPrel} extend to the case of billiards will small external fields by
\cite{Ch01, C08}. In particular for an unstable curve $\gamma$, we write 
$$\gamma_\delta=\{x\in \gamma: r_s(x)\geq \delta\},\quad  \Lambda_\delta(\gamma)=\bigcup_{x\in \gamma_\delta} W^s(x). $$
Then \eqref{GL-ShMan} holds 
{ (see \cite[Lemma 3.2]{C08} in case of external fields)}
and we have the analogue of \eqref{AC3}:
\begin{equation}
  \label{AC3v2}
\kappa_1\leq \frac{d\hmu}{d{\mu}_{\Lambda_\delta}}\leq \kappa_1^{-1}. 
\end{equation}
and the analogue of \eqref{AC4}:
\begin{equation}
  \label{AC4v2}
{\mu}(\Lambda_\delta(\gamma))\geq \kappa_2. 
\end{equation}

\begin{corollary}
\label{CrNonLocalizeLoc}
For any unstable curve $\gamma$ for any $\eps, R>0$ there exists $n$
such that
\begin{equation}
\label{SomeTimeFarLoc}
\mes(x\in \gamma: \tT^n x\in B_R)< \eps.
\end{equation}
\end{corollary}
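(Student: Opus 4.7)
The plan is to leverage Corollary \ref{CrNonLocalize} (which controls mass on finite-measure sets) together with the absolute continuity of the stable foliation, in the forms \eqref{GL-ShMan}--\eqref{AC4v2}, to transfer the conclusion to the one-dimensional unstable curve $\gamma$. First I would fix $\eps, R > 0$ and choose $\delta = \delta(\eps) > 0$ so small that $K^* \delta < \eps/2$. By \eqref{GL-ShMan} the complement $\gamma \setminus \gamma_\delta$ carries Lebesgue mass at most $\eps/2$, so it suffices to estimate $\mes(x \in \gamma_\delta : \tT^n x \in B_R)$ for an appropriately chosen $n$.

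Next I would form the Hopf brush $\Lambda = \Lambda_\delta(\gamma) = \bigcup_{x \in \gamma_\delta} W^s(x)$. Because $\gamma$ is a bounded unstable curve in $X$ and homogeneous stable manifolds have uniformly bounded length in $M$, $\Lambda$ sits in a bounded region of $X$ and therefore $\tmu(\Lambda) < \infty$. Since a homogeneous stable manifold does not cross any singularity of $\tT$, and in particular no discontinuity of the cocycle $\tau$, the $z$-coordinate is essentially constant along $W^s(x)$ and preserved by forward iteration: there is a constant $C_0$, depending only on the geometry, such that for every $x$, every $y \in W^s(x)$, and every $n \geq 0$,
$$|z(\tT^n y) - z(\tT^n x)| \leq C_0.$$
In particular, setting $E_n = \{x \in \gamma_\delta : \tT^n x \in B_R\}$ and $R' = R + C_0$, one has
$$\bigcup_{x \in E_n} W^s(x) \;\subset\; \Lambda \cap (\tT^n)^{-1}(B_{R'}).$$

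Now I would quantify this inclusion using absolute continuity. Since $r_s(x) \geq \delta$ on $\gamma_\delta$ forces $|W^s(x)| \geq \delta$, the definition of $\hmu$ preceding \eqref{AC3v2} together with \eqref{AC3v2} gives
$$\delta \cdot \mes_\gamma(E_n) \;\leq\; \hmu\!\left(\bigcup_{x \in E_n} W^s(x)\right) \;\leq\; \kappa_1^{-1}\, \tmu\!\left(\Lambda \cap (\tT^n)^{-1}(B_{R'})\right).$$
By Corollary \ref{CrNonLocalize}, applied with $A = \Lambda$, radius $R'$, and tolerance $\kappa_1 \delta \eps / 2$, there exists $n$ for which the right-hand side is at most $\eps/2$. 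Combining this with $\mes_\gamma(\gamma \setminus \gamma_\delta) \leq \eps/2$ yields the claim.

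I expect the main technical point to be a clean justification of the uniform $z$-separation estimate along stable manifolds: one must verify that in the vanishing-potential setting the billiard fact that $\tau$ is constant on each homogeneous stable manifold, and stays so under forward iteration, continues to hold uniformly, so that $C_0$ is a genuine constant independent of $x$ and $n$. Once this is established, the remainder of the argument is a direct combination of the standard absolute-continuity tools \eqref{GL-ShMan} and \eqref{AC3v2} with Corollary \ref{CrNonLocalize}.
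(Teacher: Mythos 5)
Your proposal follows essentially the same route as the paper's own proof: reduce to $\gamma_\delta$ via \eqref{GL-ShMan}, pass to the Hopf brush $\Lambda_\delta(\gamma)$, use that the $z$-coordinate is (essentially) constant along homogeneous stable manifolds under forward iteration to bound $\delta\,\mes(x\in\gamma_\delta:\tT^n x\in B_R)$ by $\hmu$ of the brush points landing in a slightly larger ball, and then invoke \eqref{AC3v2} together with Corollary \ref{CrNonLocalize} applied to $A=\Lambda_\delta$. The argument is correct; your extra care about the constant $C_0$ and the finiteness of $\tmu(\Lambda)$ only makes explicit what the paper leaves implicit.
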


\begin{proof}
%Recall the definitions 
%$$\gamma_\delta=\{x\in \gamma: r_s(x)\geq \delta\},\quad  \Lambda_\delta(\gamma)=\bigcup_{x\in \gamma_\delta} W^s_{loc}(x), $$
%$$\hmu(A)=\int_{\gamma_\delta} \mes(W^s_{loc}(x)\cap A) dx$$
%from Section \ref{BilPrel}. 
Since measure of $\gamma-\gamma_\delta$
tends to 0 as $\delta\to 0$ (see \eqref{GL-ShMan}), it suffices to prove that, for each fixed $\delta,$ 
\eqref{SomeTimeFarLoc} holds with $\gamma$ replaced by $\gamma_\delta.$
%By the absolute  continuity of stable lamination (see {\eqref{AC3v2}}),
%$\hmu$ is absolutely continuous with respect to $\tmu.$ Thus, by
Combining Corollary \ref{CrNonLocalize} with \eqref{AC3v2} we obtain 
for each $\eps>0$ 
there exists $n$ such that
$$ \hmu(x\in \Lambda_\delta: |z(\tT^n x)|\leq R+1)< \eps. $$
On the other hand the definition of $\hmu$ easily shows that
$$ \hmu(x\in \Lambda_\delta: |z(\tT^n x)|\leq R+1) \geq  \delta\mes(x\in \gamma_\delta: 
|z(\tT^n x)|\leq R)$$ 
proving the result.
\end{proof}

\subsection{Verifying (M6)} 
\label{SSM6}
By our choice of $\fM$ it suffices to show that for each $\delta,$
 for each $\eps$ and $R$ there exists $n_0$ such that
for $n\geq n_0$ for each { unstable}
curve $\Gamma$ of length at least $\delta$ we have 
\begin{equation}
\label{LeaveLoc}
\mes(x\in\Gamma: \tT^n x\in B_R)\leq \eps. 
\end{equation}
We first show this result under an additional assumption that 
\begin{equation}
\label{ZSuperLarge}
|z(\Gamma)|\geq \tR 
\end{equation}
provided $\tR=\tR(\eps, \delta, R)$ is sufficiently large
and then use Corollary \ref{CrNonLocalizeLoc} to remove this restriction.

Before giving the formal proof let us describe the main idea. Given an unstable curve $\Gamma$
satisfying the conditions above and $\tn\in\naturals$ we consider the Hopf $\tn$-brush obtained by
issuing the stable manifolds from all points of $\tT^\tn \Gamma.$ We shall show that 

(i) If $\tn= \tn(\eps, \delta, R)$ is large, then the brush has a large measure;

(ii) If at some time $n\geq \tn$ a significant proportion of $\Gamma$ came close to the origin,
then a significant portion of the $\tn$-brush would come close to  the origin at time $n-\tn.$
Since $\tT^{n-\tn}$ is measure preserving, there is not enough room in a fixed neighborhood
of the origin, giving a contradiction.

To prove part (i) above we show that the image $\tT^\tn\Gamma$ stretches across a large number 
of cells. For $T$ this is true because of the LLT, while for $\tT$ this is true because it is 
{very} well
approximated by $T$ at infinity (at this step it is important that we take 
$\tR= \tR( \eps, \delta, R, \tn)$ sufficiently large). Next, the Growth Lemma implies that
most of the components of $\tT^\tn \Gamma $ are not too short. Consequently, there are many cells
whose intersection with $\tT^\tn \Gamma$ contains relatively long component. 
Now \eqref{AC4v2} implies that the brush has a
significant measure in each such cell.

%and its intersection with a large number of cells carries comparable weight}. 
%Here we use that 
%$\tR{ = \tR( \eps, \delta, R, \tn)} \gg 1$ since
%$\tT^\tn$ is well approximated by $T^\tn$ at infinity and for $T$ the statement holds because
%it satisfies the MLLT and, hence, is local global mixing. }

The proof of part (ii) uses the fact that if a point
returns close to the origin then the same is true for its whole (homogeneous) stable manifold.

We now give a more detailed argument. 
{ We divide the proof into 
seven steps.} \medskip

{\bf Step 1: Preliminaries.}

Let $\delta_1\ll \delta$ be a small constant. The precise requirements on $\delta_1$ will be given below.
Here we require that for each {unstable} curve $\Gamma$ of length at least $\delta$ and for each $n$,
\begin{equation}
\label{NotGood}
\mes(x\in \Gamma: x\text{ is not }(\delta_1, n)-\text{good})\leq \eps^2,
\end{equation}
where we call $x$ {\em $(\delta_1,n)$-good} if
\begin{equation}
\label{eq:notgoodspellout}
r_n(x)\geq \sqrt{\delta_1} \text{ and } r_s(\tT^n x)\geq \sqrt{\delta_1}. 
\end{equation}
(The existence of $\delta_1$ { when only the first inequality is required
in \eqref{eq:notgoodspellout}
follows from the 
Growth Lemma \ref{LmGrowth} (\cite[Proposition 5.3]{Ch01} in case of external fields). The second inequality
can also be ensured by
combining \eqref{GL-ShMan} (\cite[Lemma 3.2]{C08} in case of external fields) with (M1)).}

By transversality of stable and unstable directions, there is a constant
$K_1$ such that if $\cT$ is an unstable curve and $\pi$ is the projection to
$\cT$ along the stable leaves, then 
\begin{equation}
\label{ProjLip}
d(\pi x, x)\leq K_1 d(x, \cT)
\end{equation}
 provided that $\pi$
is defined at $x$.
\medskip

{
{\bf Step 2: Long brushes are abundant.}}
Let
$$X_{\tk, \eta}=\{x\in X: \forall y\in B(x, \eta)\; \forall\; 0\leq j\leq \tk\;\; \tT
\text{ is continuous on } B(\tT^j y, \eta)\}, $$
and define $M_{\tk, \eta}$ similarly with {$X$ replaced by $M$ and}
$\tT$ replaced by $T$.
{In step 2, we prove that for $\tk$ large enough and for $\delta_1 = \delta_1(\tk)$
sufficiently small the following holds.}
If $x\in X_{\tk, 2K_1\delta_1}$ 
%$\tT^\tk$ is continuous on $B(x, 2 \delta_1)$
and $\cT$ is an unstable curve of length
$\delta_1$ through $x$, then
\begin{equation}
\label{ManyLongWs}
\mes(\ft'\in\cT: r_s(\ft')\geq 2K_1 \delta_1)\geq \frac{\delta_1}{2}.
\end{equation}

{To prove \eqref{ManyLongWs}, first we recall inequality (5.58) from \cite{CM06}}: 
$$ r_s(\ft')\geq \min_{n\geq 0} \Lambda^n {d^s}(\tT^n \ft', \cS)$$
where $\Lambda > 1$ is the minimal expansion factor of
$\tT$, $\cS$ is the discontinuity set of $\tT$
{and $d^s(\tT^n \ft', \cS)$ is the length of the shortest unstable curve
that connects  $\tT^n \ft'$ with the set $\cS$.}

Note that if the above minimum falls below $2K_1 \delta_1$, then also
\begin{equation}
  \label{SingLate}
  \min_{n\geq \tk} \Lambda^n  {d^s}(\tT^n \ft', \cS)\leq 2K_1\delta_1.
\end{equation}  
{(Indeed, for $n < \tk$, 
$$\Lambda^n d^s(\tT^n \ft', \cS) \geq d^s(\tT^n \ft', \cS) \geq d(\tT^n \ft', \cS) 
\geq 2K_1\delta_1$$
by the definition of $X_{\tk, 2K_1 \delta_1}$.)
Let us write $\ell = (\cT, \frac{1}{\delta_1}\mes_{\cT})$.
Then, we have
\begin{align*}
&\nu_{\ell} (\ft'\in \cT:
\min_{n\geq \tk} \Lambda^n  d^s(\tT^n \ft', \cS)\leq 2K_1\delta_1) \\
&\leq \sum_{n=\tk}^{\infty} 
\nu_{\ell}
(\ft'\in \cT:  d^s(\tT^n \ft', \cS)\leq \Lambda^{-n} 2K_1\delta_1).
\end{align*}
Next, observe that by transversality there exists some constant $C$ so that for every 
$\ft \in \cT$, $r_n(\ft') \leq C d^s(\tT^n \ft', \cS)$. Thus the above display can be bounded by
$$\sum_{n=\tk}^{\infty} 
\nu_{\ell}
(\ft'\in \cT:  r_n(\ft') \leq \Lambda^{-n} 2CK_1\delta_1) \leq
\sum_{n=\tk}^{\infty}
 \cZ(\tT^n_* \ell) \Lambda^{-n}2CK_1\delta_1
$$
Using the fact that
$\cZ_{\ell} = 2/\delta_1$ and the growth lemma, the above is bounded by
$$
\sum_{n=\tk}^{\infty}(C_1 \theta^n \frac{2}{\delta_1} + C_2)( \Lambda^{-n}2CK_1\delta_1)
= \frac{4K_1CC_1}{1-\theta/\Lambda} \theta^{\tk} \Lambda^{-\tk} +
\frac{2K_1CC_1 \delta_1}{1-1/\Lambda} \Lambda^{-\tk} =: I + \RmII.
$$
Now we choose $\tk$ so that $I < 1/4$ and then choose $\delta_1 = \delta_1(\tk)$ so that
$\RmII < 1/4$. Since $\nu_{\ell} = \frac{1}{\delta_1} \mes_{\cT}$, 
\eqref{ManyLongWs} follows.
}

{To complete Step 2 we show that 
$X_{\tk, 2K_1\delta_1}$ fills most of the space.
Namely, by further reducing $\delta_1 = \delta_1(\tk)$ if necessary, we may assume that
\begin{equation}
\label{eq:cont.square}
\mu(M-M_{\tk, 2K_1\delta_1})\leq  \eps^{7}.
%\mu(x\in M: T^\tk \text{ is not continuous on } B(x, 2 \delta_1))\leq \eps^2. 
\end{equation}
Then for large $\tR$ and for each cell $\cC = \{ z = m \}$ which is at least $\tR$ away from the origin, 
\begin{equation}
\label{WShortSmall}
{\mu} ((X - X_{\tk, 2K_1 \delta_1}) \cap \cC) <  2\eps^{7}.
%\text{the relative measure of }X_{\tk, 2K_1 \delta_1}\text{ in that cell is smaller than }2\eps^2.
\end{equation}
\medskip

{\bf Step 3: Construction of unstable frame.}
Next, we construct a collection of unstable curves 
$\{ W_{\bk ,i,j} \}$, $i=1,..,I$, $j=1,...,J$, $\bk \in \integers^2$ with $W_{\bk,i,j}
\subset X \cap \{ z = \bk \}$ with $\mbox{length}(W_{\bk ,i,j}) \in [\delta_1, 2 \delta_1)$
that will serve as the handles  of our brushes.

Recall that by \eqref{eq:uniformcones},
the unstable cones can be defined in a way that there is a segment
$[\alpha, \gamma] \subset \cS^1$ (here $\cS^1$ is identified with 
$ [0, 2 \pi)$)
so that $0 <\alpha < \gamma < \pi/2$
and for any $y \in M$ and for any $\beta \in [\alpha, \gamma]$, the direction
$\beta = d \phi / dr$ is in the unstable cone. Increasing $\alpha$ and decreasing
$\beta$ a little and  supposing that the field small enough, the same is true
for $(y,\bk) \in X$ for any $y \in M$ and $\bk \in \integers^2$.
Let us now fix $\bk \in \integers^2$. 
First we fix parallel lines 
$\cW_1,...\cW_I \subset X \cap\{z = \bk\}$
with angle
$d \phi / dr = \beta$ where $\beta := (\alpha + \gamma)/2$
and the distance between $\cW_i$ and $\cW_{i+1}$ is $\delta_1$. 
(To be more precise, we have to fix these lines in all connected components of 
$X \cap \{ z = \bk\}$,
which are
topological cylinders, but to simplify notation we pretend that there is only 
one cylinder. 
Also we do not emphasize the dependence on $\bk$ as the curves
are translates of one another for different $\bk$'s). 
Each line segment $\cW_i$ connects the two
boundaries of the cylinder, that is one of its endpoints 
is on the line $\phi = - \pi/2$, the other one is on the line 
$\phi = \pi/2$. The index $I$ is defined by
$$
I = \max \{ i: i \cos (\beta) \delta_1 \leq \text{ arc length of the
scatterer} \} -1.
$$

%By the choice of $I$, we have $d(\cW_I,\cW_1) \in [\delta_1, 2 \delta_1)$.

We would like to use $\cW_i$'s as the frame for building our brushes,
However, there are two problems
 when trying to use \eqref{ManyLongWs}. First, 
$\cW_i$'s are too long
 compared to $\delta_1$,  so the right hand side of 
 \eqref{ManyLongWs} does not give a good bound for the relative measure on $\cW_i$. 
Secondly, $\cW_i$ may be disjoint to $X_{\tk, 2K_1\delta_1}$
and so \eqref{ManyLongWs} may not hold.
%long for the Growth Lemma to apply.
%Secondly they do not necessarily belong to the good set
%$X_{\tk, 2K_1\delta_1}$. 
To handle the first issue we subdivide each $\cW_i$ into shorter pieces. 
To handle the second issue we perturb slightly each short segment so that the resulting broken line 
lies in a $\xi \delta_1$ neighborhood of $\cW_i$
 and most of the resulting segments 
$\{W_{\bk,i,j}\}_{j=1,\dots, J}$ contain a point in 
$X_{\tk, 2K_1\delta_1}$. They are defined as follows.
%\footnote{\color{red} It seems that the resulting
%segments could be slightly longer than $\delta_1$}
$W_{\bk,i,j}$ is the line segment connecting $(r_{\bk, i,j-1}, \phi_{\bk,  i,j-1},\bk)$
and $(r_{\bk,i,j}, \phi_{\bk,i,j},\bk)$,
where $\phi_{\bk,i,j} = -\pi/2 + j \sin (\beta) \delta_1$ 
 for 
$$j < J := \max \{ j: j \sin (\beta) \delta_1 < \pi\}$$
and $\phi_{\bk,i,J} = \pi/2$,
and $r_{\bk,i,j}$ is defined
inductively. First, $r_{\bk,i,0}$ is 
such that $(r_{\bk,i,0}, - \pi/2)$ is an endpoint of $\cW_i$
and denote $\hat r_{\bk,i,j} = r_{\bk,i,0} + j \cos (\beta) \delta_1$ (thus
$(\hat r_{\bk,i,j}, \phi_{\bk,i,j},\bk) \in \cW_i$).
 Now assume that 
$r_{\bk,i,j}$ is defined so that $r_{\bk,i,j} - \hat r_{\bk,i,j} \in (-\xi \delta_1, \xi \delta_1)$.
If $r_{\bk,i,j} - \hat r_{\bk,i,j} <0$ ($>0$, resp.), then we try to choose
$r_{\bk,i,j+1} \in 
(\hat r_{\bk,i,j+1}, \hat r_{\bk,i,j+1}+ \xi \delta_1)$ 
(respectively $r_{\bk,i,j+1} \in 
(\hat r_{\bk,i,j+1} - \xi \delta_1, \hat r_{\bk,i,j+1})$)
so that
the line segment $W_{\bk,i,j}$ contains a point in $X_{\tk, 2K_1\delta_1}$. If this
is not possible, we choose $r_{\bk,i,j+1}$ arbitrarily (in the above interval) and 
say that $W_{\bk,i,j}$ is bad. Note that in case $W_{\bk,i,j}$ is bad, then there is a corresponding bad region
of area $C\delta_1^2$ that is disjoint to $X_{\tk, 2K_1\delta_1}$. 

To facilitate the
comparison between the invariant measure $\mu$ and the area, we say that 
$W_{\bk,i,j}$ is marginal if
$\min \{j, J-j\} < \eps^2 / (2\delta_1)$. 
Thus there are three kinds of line segments $W_{\bk,i,j}$: marginal, bad 
(from now on {\em bad} means bad in the sense defined above, but
not marginal) and good.

Now if $W_{\bk,i,j}$ is bad,
then the $\mu$ measure of the corresponding bad region is at least 
$C \eps^4 \delta_1^2$ and so 
by \eqref{WShortSmall},
the number of bad curves for any $\bk$
is bounded by $ \eps^2 \delta_1^{-2}/2$.
Also, the $\mu$ measure of the $K_1 \delta_1$ neighborhood of marginal curves
is bounded by $ \eps^2/2$.}
\medskip

{
{\bf Step 4: Anticoncentration of measure.}}
Next, pick an unstable curve $\Gamma$ of length at least $\delta$ satisfying \eqref{ZSuperLarge}.
%Divide $X$ into squares of size $\delta_1$ and choose a curve transversal to the stable cone in each square.
Let 
{$\cT$ be the union of 
the line segments $\{ W_{k,i,j}\}$ constructed in Step 3.} Given $\tn\in\naturals$ 
let $\pi_\tn: \tT^{\tilde n} \Gamma\to\cT$ be the projection to the closest {$ W_{\bk,i,j}$}
along the stable leaves.
Assuming that $\delta_1$ is so small  that $\sqrt{\delta_1} > K_1 \delta_1$
we get that $\pi_\tn$ is defined on $\tT^\tn x$ if $x$ is 
$(\delta_1,  \tn)$-good.
%However, $\pi_\tn(\tT^n x)$ may belong to a different square than $\tT^\tn x.$
Denote by $J_\tn$ the {inverse of the} Jacobian of $\tT^\tn:\Gamma\to\tT^\tn\Gamma.$
For $\ft\in \cT$ let 
$$ \cJ(\ft)=\sum_{\stackover{x\text{ is } (\delta_1, \tn)-good}
{\pi_\tn (\tT^\tn x)=\ft}} J_{\tilde n}(x). $$

Let $L_\tn=\{\ft\in \cT: 0<\cJ_\tn(\ft)<\frac{1}{\sqrt{\tn}}\}. $
{In Step 4, we prove the following claim: if $\tn = \tn(\delta_1), \tR = \tR(\delta_1, \tn)$ are large enough, 
%and $\ft\in X_{\tk, 2K_1 \delta_1}$,
%then $\ft\in L_\tn.$ 
$W_{\bk, i,j}$ is a good line segment constructed in Step 3,
$\ft \in W_{\bk, i,j}$ and $\cJ(\ft) >0$, then $\ft \in L_\tn$.}
%on which $\cJ$ is not identically
%zero, then $\mbox{mes} (W_{\bk, i,j} \cap L_{\tn}) \geq \delta_1 /2.$

To prove this claim, first we  
observe that by the definition of
$\pi_\tn$ and \eqref{ProjLip},
if $\pi_\tn (\tT^\tn x)=\ft$, then $d(\tT^\tn x, \ft)\leq K_1\delta_1.$
Take $\ft'$, on the same {$ W_{\bk,i,j}$} as $\ft$ with
$r_s(\ft')\geq 2 K_1 \delta_1$ 
({the Lebesgue measure of such points is at least $\delta_1/2$ by \eqref{ManyLongWs}
by the fact that $W_{\bk,i,j}$ is good}).
Since $x$ is {$(\delta_1,\tn)$-good
and by the construction of $\cT$}, there is $x'\in \Gamma$ such that
$\tT^\tn x'$ belongs to the same component as $\tT^\tn x$ and
$\pi(\tT^\tn x')=\ft'$.
By bounded distortion of $\tT^\tn$ (see \eqref{BDWu}), there exists a constant $c$ such that if
$\cJ_\tn(\ft)\geq \frac{1}{\sqrt{\tn}}$, then
$\cJ_\tn(\ft')\geq \frac{c}{\sqrt{\tn}}.$
Combining the absolute continuity of $\pi_\tn$ (see
\eqref{HolJac} and \eqref{HoloBnd}) with \eqref{ManyLongWs} { 
(and noting that the length of $W_{\bk, i, j}$
is bounded by $2 \delta_1$ by construction),}
we conclude that if there existed $\ft'$ such that $\cJ_\tn(\ft')\geq \frac{1}{\sqrt{\tn}}$, then we would have
\begin{equation}
  \label{HitSquare}
 \mes(x\in \Gamma:  z(\tT^\tn x)=z(\ft)) \geq \frac{\bar{c} \delta_1}{\sqrt{\tn}} . 
\end{equation}  
%\todo[inline]{where $\fS$ is the square containing $\ft.$}

On the other hand the LLT for $T$ shows that  there is a constant $ \tC$ such that for each $\tn$
there exists $\tR$ such that if $z(\Gamma)\geq \tR$, then
\begin{equation}
  \label{LLTBound}
\mes(x\in \Gamma:  z(\tT^\tn x)=z(\ft)) \leq \frac{\tC}{\tn}.
\end{equation}  
%then the LHS of \eqref{HitSquare} is less than $\frac{\tC}{\tn}.$ 
If $\tn$ is so large that
$ \DS \frac{\tC}{\tn}< \frac{\brc \delta_1}{\sqrt{\tn}},$ that is,
\begin{equation}
\label{TldN1}
\tn>\left(\frac{\tC}{\brc \delta_1}\right)^2,
\end{equation}
this
gives a contradiction with \eqref{HitSquare} proving the claim.
\medskip

{
 \bf Step 5: Most of the image of $\Gamma$ is not too close to the discontinuities.} 
We claim that
if $\delta_1$ is small, then for appropriate $\tn, \tR$ we have
\begin{equation}
\label{GammaStar}
 \mes(\Gamma\setminus \Gamma^*)\leq 4 \eps^2,
 \end{equation}
where
$ \Gamma^*$ is the set of points $x$ in $\Gamma$ such that 
$x$ is $(\delta_1, \tn)$--good and $\pi_\tn(\tT^\tn x)\in L_\tn.$

{ To
prove \eqref{GammaStar} note that by combining
 \eqref{NotGood} 
with the fact that for $(\delta_1, \tn)$--good points $x$,
$\pi_\tn(\tT^\tn x)$ exists, \eqref{GammaStar} will be implied by
the following:
$$
 \mes( \Gamma^{\#})\leq 3 \eps^2,
$$
where $ \Gamma^{\#}$ is the set of points $x$
in $\Gamma$ that are 
$(\delta_1, \tn)$--good
and 
$\pi_\tn(\tT^\tn x)\notin L_\tn.$ By Step 4, it
is sufficient to prove that the Lebesgue measure of points
$ x \in \Gamma$ so that $x$ is $(\delta_1, \tn)$--good
and $\pi_\tn(\tT^\tn x) \in W_{\bk,i,j} \in \cT$ with some marginal or
bad $W_{\bk,i,j}$ is bounded by $3 \eps^2$. 

Note that by choosing $\tR$ large we can ensure that
the goodness of $W_{\bk,i,j}$ only depends on $i,j$ and not on $\bk$
as long as $|\bk| > \tR - \tn $. Indeed, for fixed $\tk, \delta_1, \tn$ we can ensure that
the singularities of $\tT^{\tk + \tn}$ are uniformly close to those of 
$T^{\tk + \tn}$ by choosing the field small. 
Let us write $(i,j) \in \cB$ if $W_{\bk,i,j}$ is bad or marginal for some
(and hence for all) $\bk$ with $|\bk| > \tR - \tn$.

Next, increasing $\tn = \tn(\delta_1)$ if necessary, uniform equidistribution 
of the images of 
unstable curves
(see \cite[Proposition 2.2]{C08}) implies that
\begin{align*}
&\mbox{mes} (x \in \Gamma: \exists \bk, \exists (i,j) \in \cB:
\pi_\tn(\tT^\tn x) \in
W_{\bk,i,j} )\\
& \leq 2 
\mu ( x \in X: 
d(x, \cup_{(i,j) \in \cB} W_{\tilde{\bk},i,j}) < K_1 \delta_1 )
\end{align*}
where $\tilde{\bk}$ is arbitrary with $|\tilde{\bk}| > \tR$. The last displayed formula is bounded by $3 \eps^2$ by the last paragraph of Step 3. 
We have verified \eqref{GammaStar}.
}
\medskip

{
{\bf Step 6: Proof of 
\eqref{LeaveLoc} assuming \eqref{ZSuperLarge}.}}
%By the foregoing discussion (see, in particular, \eqref{NotGood} 
By the definition of $L_\tn$, {for any $N > \tn$,}
\begin{equation}
\label{ReturnGammaStar}
\mes(x\in\Gamma^*: T^N x\in B_R)\leq 
\frac{1}{\sqrt{\tn}} \mes(y\in L_\tn: T^{N-\tn} y\in B_{R+1}) .
%\frac{C\mes(x\in \Lambda: T^Nx\in B_{R+1})}{\eps^2 \delta_1}.
\end{equation}
On the other hand combining the absolute continuity of the stable lamination 
(see \eqref{AC3v2})
with the fact that $r_s\geq \delta_1$ on $L_\tn$, we obtain that 
there is a constant $\hC$ such that
\begin{equation}
\label{L-HL}
\mes(y\in L_\tn: T^{N-\tn} y\in B_{R+1}) 
\leq \frac{\hC}{\delta_1 }  {\mu}(y\in \hL_\tn: T^{N-\tn} y\in B_{R+2}), 
\end{equation}
where $\DS \hL_\tn=\bigcup_{z\in L_\tn} W^s(z).$ 

Since $\tT$ preserves ${\mu}$, we have
\begin{equation}
\label{Area}
 {\mu}(y\in \hL_\tn: \tT^{N-\tn} y\in B_{R+2})\leq D (R+2)^2
\end{equation}
for some $D>0.$ Combining \eqref{ReturnGammaStar}, \eqref{L-HL}, and \eqref{Area},
we see that
$$ \mes(x\in\Gamma^*: T^N x\in B_R)\leq \frac{D\hC(R+2)^2}{\delta_1\sqrt{\tn}}. $$
Thus if
\begin{equation}
\label{TldN2}
\tn\geq  \left[ \frac{D \hC (R+2)^2}{2 \delta_1 (\eps-4\eps^2)} \right]^2,
\end{equation}
then 
$$ \mes(x\in\Gamma^*: T^N x\in B_R)\leq \eps-4 \eps^2. $$

Combining this with \eqref{GammaStar} we obtain
\eqref{LeaveLoc} provided $|z(\Gamma)|$ is large as required by \eqref{ZSuperLarge}. \smallskip

%  Before completing the proof of \eqref{LeaveLoc} in the full generality, it is worthwhile to review
 % the relations between the different parameters involved in the proof of \eqref{LeaveLoc} assuming
 % \eqref{ZSuperLarge}. First, we take $K_1$ so that \eqref{ProjLip} holds. Then we select $\tk$ so that
 % \eqref{TildKLarge} holds and hence \eqref{ManyLongWs} is satisfied. Next, we select $\delta_1$ so that
%  \eqref{NotGood} is valid, \eqref{Delta1-2} is satisfied, and \eqref{WShortSmall} holds for sufficiently large $\tR.$
%  After that we take $\tn$ satisfying \eqref{TldN1} and \eqref{TldN2}. Finally, we take $\tR$ so large that
%  \eqref{WShortSmall} holds and \eqref{LLTBound} is satisfied.
%\smallskip
\medskip

{
{\bf Step 7: Relaxing \eqref{ZSuperLarge}.}}
It remains to obtain \eqref{LeaveLoc} without assuming \eqref{ZSuperLarge}. Fix $\eps>0.$
Then take $\delta_2$ so small that for every unstable curve $\Gamma$ 
of length $\delta$
and
for all sufficiently large $n$,
\begin{equation}
\label{TooShort} 
\mes(x\in \Gamma: r_n(x)\leq \delta_2)\leq \eps^2.
\end{equation}
Applying \eqref{LeaveLoc} with the assumption \eqref{ZSuperLarge} and with $\delta$ replaced by $\delta_2$ and
$\eps$ replaced
by $\delta_2 \eps$, we find that there exists
 $\tR$ so that for any curve $\Gamma$ of length greater than $\delta_2$ such that
$|z(\Gamma)|\geq \tR$ we have 
\begin{equation}
\label{FarThanClose}
 \mes(x\in \Gamma: z(\tT^n x)\leq R)\leq \eps^2 |\Gamma| \quad
\text{for}\quad n\geq n_0(\tR, \eps, \delta_2).
\end{equation}

Next for each $\Gamma$ with $|\Gamma|\geq \delta$, Corollary~\ref{CrNonLocalizeLoc} shows that there is some time
$n_1=n_1(\Gamma, \eps)$ such that 
\begin{equation}
\label{TooSlow}
\mes(x\in\Gamma: |z(\tT^{n_1}x)|\leq \tR)\leq \eps^2.
\end{equation}
By compactness there exists $N_1$ such that for all curves $\Gamma$ of length at least 
$\delta$ one has
$n_1(\Gamma, \eps)\leq N_1.$ 
Further increasing $N_1$ if necessary, we can assume that 
\eqref{TooShort} holds with $n=N_1$.
Next, take
$n\geq N_1+n_0(\tR, \eps, \delta_2).$ 
Divide the set of $x$ such that $|z(T^n x)|\leq R$ into three parts
$$ (i): r_{N_1}(x)\leq \delta_2, \quad (ii):
|z(\tT^{N_1}x)|\leq \tR, \quad $$
$$(iii): r_{N_1}(x)\geq \delta_2,  
|z(\tT^{N_1}x)|\geq \tR\text{ but } |z(\tT^n x)|\leq R .$$
Inequalities \eqref{TooShort}, \eqref{FarThanClose}, and \eqref{TooSlow} show 
 that contribution of each part to $\mes(x: |z(\tT^n x)|\leq R)$ is at most $\eps^2.$ This proves
\eqref{LeaveLoc} for 
$$n\geq N_1+n_0(\tR, \eps, \delta_2).$$
%In particular, \eqref{LeaveLoc} holds if $n\geq N_1+n_0(\tR, \eps, \delta_2).$

\section{Conclusions.}
\label{ScConclusion}
This paper deals with global mixing, that is, calculation of the expected value of an
extended observable in a long time limit, for mechanical systems.
The systems considered in this paper admit approximations at infinity,
that is, when either {the position or the velocity} is large, by a periodic system. It turns out that
if the map, obtained {from the approximating
system by factoring out the $\mathbb Z^d$ extension}, is chaotic (in our examples, the reduced {systems are hyperbolic systems}
with singularities), then the original system enjoys global global mixing.
To establish local global mixing, in addition to controlling
  the dynamics at infinity we also need to ensure the hyperbolicity in the whole phase space.
In particular, we gave examples, where local modifications of the dynamics destroy local global mixing.
%some additional control \todo[inline]{for small energies %and velocities is required.}

We note that {notions of global mixing} discussed in this paper are neither implied by nor imply the classical properties
studied in infinite ergodic theory \cite{A97}. For example, Lorentz gas in a small external field is dissipative but
it enjoys both local global and global global mixing. 
Non mild local perturbations of Lorentz gas are conservative 
but not ergodic and they enjoy global global mixing (even though under natural assumptions, ergodicity
is a necessary prerequisite for local global mixing in the recurrent case, cf. discussion in \S \ref{SSSLocPert}). On the other hand,
{certain continuous time systems of 
bouncing balls in gravity field (i.e. special cases of the systems studied in \S \ref{SSGravity})} are likely to be ergodic and Krickeberg mixing but 
they are not global global mixing. This logical independence between global mixing and other infinite ergodic
theoretic properties is not surprising since those notions serve different purposes. Namely, classical ergodic
theory strives to control the ergodic sum of localized ($L^1$) observables and the notions such as Krickeberg mixing are useful for that purpose 
(see e.g. \cite{DSzV08, PS08,PT17}). The global mixing, on the other hand,
is useful for studying ergodic sums of extended observables (cf. \cite{BGL17, LM18}). In particular, 
it seems to us that
the global mixing is more suitable for derivation of macroscopic dynamics from microscopic laws,
as statistical mechanics concerns itself with extended observables.
In fact, in this paper we were able to prove
 
 (A) global global mixing for systems where a good control on the dynamics
in the bulk is already known and 

(B) local global mixing for systems where full limit theorems
are available due to a good
control of the boundary conditions ({ \cite{Ch01, C08}},  \cite{DSzV09, DN16}).

We also note that for mechanical systems there are more examples where the local global mixing is known
than the examples where the Krickeberg mixing was proven. Intuitively, proving local global mixing is 
easier since it only requires control on most of the phase space, while Krickeberg mixing requires
a good understanding of the dynamics in the localized regions of the phase space.

In summary global mixing is an interesting recent concept, which is relevant in 
{ several areas of mathematics including mathematical physics (cf. \cite{Kh49}),
dynamical systems (\cite{DDKN}), homogenization (\cite{DLN19})
and probability (\cite{DG})}
 and is easier to
establish than several other mixing properties. Our paper is a first step in studying global mixing for mechanical systems.
A natural next question to study is the Birkhoff theorem for global observables.
In \cite{DLN19} we address this question
  in the simplest setting, namely for i.i.d. random walks. However, since the main tool in \cite{DLN19} is the
  local limit theorem and related asymptotic expansions, we hope that the results similar to \cite{DLN19} also hold
  for many of the mechanical systems addressed here.

We also hope our work will stimulate further research on global mixing.
Some of the natural questions motivated by our results
include the multiple mixing, limit theorems for ergodic sums of global observables as well as 
quantitative aspects of global mixing.

%\newpage
%\listoftodos\relax

\end{document}